\tikzset{hidden/.style = {thick, dashed}}
\theoremstyle{plain}
\theoremstyle{definition}
\newtheorem{theorem}{Theorem}[section]
\newtheorem{corollary}[theorem]{Corollary}
\newtheorem{definition}[theorem]{Definition}
\newtheorem{lemma}[theorem]{Lemma}
\newtheorem{proposition}[theorem]{Proposition}
\newtheorem{problem}[theorem]{Problem}
\theoremstyle{remark}
\newtheorem{remark}[theorem]{Remark}
\newtheorem{example}[theorem]{Example}
\definecolor{forestgreen}{rgb}{0.0, 0.27, 0.13}
\newcommand{\triang}[1]{
	\foreach \i in {1,...,#1}
	{
		\node at (\i,0) {\i};
	}
	\pgfmathtruncatemacro{\n}{#1 -1};
	\foreach \i in {1,...,\n}
	{
		\foreach \j in {1,...,\i}
		{
			\draw[fill=lightgray,thick] (1+ \i - 1/2*\j,1/2*\j) circle (2pt);
		}
	}
}
\newcommand{\pretriang}[1]{
	\pgfmathtruncatemacro{\n}{#1 -1};
	\foreach \i in {1,...,\n}
	{
		\foreach \j in {1,...,\i}
		{
			\draw[fill=lightgray,thick] ({0.5*(1+ \i - 1/2*\j)},1/4*\j) circle (2pt);
		}
	}
}
\newcommand{\marked}[2]{
	\node at (1/2*#1 + 1/2*#2,1/2*#2 - 1/2*#1) {\scalebox{1.3}{$\textcolor{red}{\varheartsuit}$}};
}
\newcommand{\ctriang}{
\triang{9}
\marked{2}{3}
\marked{2}{4}
\marked{2}{7}
\marked{3}{5}
\marked{3}{6}
\marked{3}{8}
\marked{4}{5}
\marked{4}{6}
\marked{4}{8}
\marked{5}{7}
\marked{6}{7}
\marked{7}{8}
}
\newcommand{\cbot}{
\draw[ultra thick] (1.15,.15) -- (1.25,.25);
\draw[ultra thick] (3.15,.15) -- (3.25,.25);
\draw[ultra thick] (6.15,.15) -- (6.25,.25);
\draw[ultra thick] (5.85,.15) -- (5.75,.25);
\draw[ultra thick] (6.85,.15) -- (6.75,.25);
\draw[ultra thick] (7.85,.15) -- (7.75,.25);
}
\newcommand{\cptriang}{
\triang{9}
\marked{1}{2}
\marked{1}{5}
\marked{1}{7}
\marked{1}{8}
\marked{2}{3}
\marked{2}{4}
\marked{2}{6}
\marked{2}{9}
\marked{3}{5}
\marked{3}{7}
\marked{3}{8}
\marked{4}{5}
\marked{4}{7}
\marked{4}{8}
\marked{5}{6}
\marked{5}{9}
\marked{6}{7}
\marked{6}{8}
\marked{7}{9}
\marked{8}{9}
}
\renewcommand{\S}{S}
\DeclareSymbolFont{extraup}{U}{zavm}{m}{n}
\DeclareMathSymbol{\varheartsuit}{\mathalpha}{extraup}{86}
\newcommand\precdot{\mathrel{\ooalign{$\prec$\cr
  \hidewidth\raise.25ex\hbox{$\scalebox{.6}{$\cdot$}\mkern0.5mu$}\cr}}}
\newcommand{\nc}{\pi}%
\newcommand{\nn}{p}%
\newcommand{\Kr}{\mathrm{Krew}}
\newcommand{\Cat}{\mathrm{Cat}}
\newcommand{\NC}{\mathrm{NC}}
\newcommand{\NN}{\mathrm{NN}}
\newcommand{\Camb}{\mathrm{Camb}}
\newcommand{\Roww}{\mathrm{Row}}
\newcommand{\Supp}{\mathrm{Supp}}
\renewcommand{\L}{\mathcal{L}}
\renewcommand{\a}{w}
\renewcommand{\r}{r}
\newcommand{\Out}{\mathrm{Out}}
\newcommand{\In}{\mathrm{In}}
\renewcommand{\sc}{\mathsf{c}}
\newcommand{\w}{\mathsf{w}}
\newcommand{\T}{\mathsf{T}}
\newcommand{\sT}{\mathsf{T}'}
\renewcommand{\t}{\mathrm{tog}}
\newcommand{\p}{P}
\newcommand{\Bal}{\mathrm{Bal}}
\newcommand{\ARc}{\mathrm{AR}(c)}
\newcommand{\AR}{\mathrm{AR}}
\newcommand{\co}{c_1}
\newcommand{\rot}{\mathrm{rot}}
\newcommand{\row}{\mathrm{Row}}
\newcommand{\Charm}{\mathrm{Charm}}
\newcommand{\ARcp}{\mathrm{AR}(c')}
\newcommand{\Qc}{Q_c}
\newcommand{\inv}{\mathrm{inv}}
\newcommand{\M}{\varheartsuit}
\newcommand{\bigheart}{\scalebox{3.5}{\textcolor{red}{\heart}}}
\newcommand{\heart}{\ensuremath\varheartsuit}
\newcommand{\lheart}{L}
\newcommand{\rheart}{R}
\DeclareMathOperator{\bijK}{\phi}
\DeclareMathOperator{\dyck}{\mathrm{Dyck}}
\DeclareMathOperator{\thicken}{\mathrm{Match}}
\DeclareMathOperator{\mrho}{\mathrm{Krow}} %
\DeclareMathOperator{\Krow}{\mrho}
\DeclareMathOperator{\Krew}{\Kr}
\newcommand{\orbsupp}{\mathrm{OrbitSupp}}
\definecolor{darkblue}{rgb}{0.0,0,0.7}
\newcommand{\defn}[1]{\emph{\color{red!50!black} #1}}
\author[Dequ\^ene]{Benjamin Dequ\^ene}
\address[B.~Dequ\^ene]{D\'epartement de math\'ematiques, LaCIM, Universit\'e du Qu\'ebec \`a Montr\'eal}
\email{dequene.benjamin@courrier.uqam.ca}
\author[Frieden]{Gabriel Frieden}
\address[G.~Frieden]{McGill University, Montr\'eal, Canada}
\email{gabefri@gmail.com}
\author[Iraci]{Alessandro Iraci}
\address[A.~Iraci]{Universit\`a di Pisa, Italy}
\email{alessandro.iraci@unipi.it}
\author[Schreier-Aigner]{Florian Schreier-Aigner}
\address[F.~Schreier-Aigner]{University of Vienna, Austria}
\email{florian.schreier-aigner@univie.ac.at}
\author[Thomas]{Hugh Thomas}
\address[H.~Thomas]{D\'epartement de math\'ematiques, Universit\'e du Qu\'ebec \`a Montr\'eal}
\email{thomas.hugh\_r@uqam.ca}
\author[Williams]{Nathan Williams}
\address[N.~Williams]{The University of Texas at Dallas}
\email{nathan.f.williams@gmail.com}
\begin{document}

\title{Charmed roots and the Kroweras complement}

\begin{abstract}
	Although both noncrossing partitions and nonnesting partitions are uniformly enumerated for Weyl groups, the exact relationship between these two sets of combinatorial objects remains frustratingly mysterious.  In this paper, we give a precise combinatorial answer in the case of the symmetric group: for any standard Coxeter element, we construct an equivariant bijection between noncrossing partitions under the \emph{Kreweras complement} and nonnesting partitions under a Coxeter-theoretically natural cyclic action we call the \emph{Kroweras complement}.  Our equivariant bijection is the unique bijection that is both equivariant and support-preserving, and is built using local rules depending on a new definition of \emph{charmed roots}.  Charmed roots are determined by the choice of Coxeter element---in the special case of the linear Coxeter element $(1,2,\ldots,n)$, we recover one of the standard bijections between noncrossing and nonnesting partitions.
\end{abstract}

\makeatletter
\@namedef{subjclassname@2020}{%
	\textup{2020} Mathematics Subject Classification}
\makeatother

\subjclass[2020]{
	Primary: 05E18
}

\keywords{Catalan combinatorics, noncrossing partitions, nonnesting partitions, Kreweras complement}

\date{\today}

\maketitle

\section{Introduction}
\label{sec:intro}

\subsection{Noncrossing and nonnesting partitions}
Let $W \subseteq \mathrm{GL}(V)$ be a finite complex reflection group acting in its reflection representation on a complex vector space $V$ of dimension $\r$ with reflections $T$~\cite{Humphreys1990,lehrer2009unitary}.  It is well-known that the ring of $W$-invariants $\mathbb{C}[V]^W$ is a polynomial ring generated by invariants of degrees $d_1\leq d_2 \leq \cdots \leq d_\r$.  The \defn{Coxeter number} of a well-generated $W$ (that is, $W$ is generated by $r$ reflections) is $h=d_\r$ and the \defn{$W$-Catalan number} is

\begin{equation}
	\label{eq:catalan}
	\mathrm{Cat}(W) \coloneqq \prod_{i=1}^\r \frac{h+d_i}{d_i}.
\end{equation}

\subsubsection{Noncrossing partitions}
The \defn{absolute order} on $W$ is the poset defined as the oriented Cayley graph of $W$ generated by $T$, where the identity of $W$ is the minimal element.  For $W$ well-generated, a \defn{Coxeter element} $c$ is defined to be a regular element of order $h=d_\r$;
any Coxeter element has rank $\r$ in the absolute order.   The \defn{$c$-noncrossing partition lattice} $\NC(W,c)$ is the interval $[e,c]_T$ in the absolute order (it is indeed a lattice)~\cite{brady2002k,BradyWatt2008,bessis2003dual}.  For any Coxeter elements $c,c'$, we have $\NC(W,c) \simeq \NC(W,c')$ as posets because there exists an automorphism of $W$ (sometimes outer) fixing the set of reflections and sending $c$ to $c'$~\cite{reiner2017non}.  The \defn{support} of a noncrossing partition $\nc \in \NC(W,c)$ is the set $\Supp(\nc)$ of simple reflections required to write a reduced word in simple reflections for $\nc$.

Until very recently, the number of noncrossing partitions had only been computed case-by-case; a uniform proof was found in the case of real $W$ in~\cite{galashin2022rational}.

\begin{theorem}[{\cite[Section 13]{bessis2015finite}}] %
	For $W$ a well-generated finite complex reflection group with Coxeter element $c$, $|\NC(W,c)| = \mathrm{Cat}(W)$.
\end{theorem}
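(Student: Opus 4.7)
The plan is to adopt Bessis's Lyashko--Looijenga (LL) approach. I would first set up the geometry: the quotient $V/W \cong \mathbb{C}^\r$ is coordinatized by algebraically independent fundamental invariants $f_1, \ldots, f_\r$ of degrees $d_1 \leq \cdots \leq d_\r = h$, and the image of the union of reflecting hyperplanes is a reduced hypersurface $\Delta \subset V/W$, the \emph{discriminant}. Viewing $f_\r$ as a projection to $\mathbb{C}$ with the remaining invariants as base coordinates, the LL map sends $(y_1, \ldots, y_{\r-1}) \in \mathbb{C}^{\r-1}$ to the multiset of $f_\r$-values of the $h$ points of $\Delta$ lying over the corresponding vertical line, viewed as an element of $\mathbb{C}^h/\mathfrak{S}_h$.

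The central input is that LL is a finite quasi-homogeneous morphism whose generic fiber has exactly $N := h^\r \r!/|W|$ elements, a count obtained from the Jacobian of the quotient map $V \to V/W$ together with the total degree of $\Delta$. I would then invoke Bessis's identification: a point in a generic fiber of LL corresponds to an ordered tuple of irreducible local factors of $\Delta$ along the vertical line, and the monodromy action of the pure braid group puts such tuples in bijection with reduced reflection factorizations $c = t_1 \cdots t_\r$, equivalently with maximal chains in $\NC(W,c)$. This yields the Hurwitz-type identity that $N$ equals the number of maximal chains in $\NC(W,c)$.

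To pass from chain counts to $|\NC(W,c)|$, I would refine by parabolic type. For each parabolic subgroup $W_X \leq W$, applying the LL argument to $W_X$ enumerates maximal chains in intervals $[e,\nc]_T$ for $\nc$ of Coxeter-type $X$; summing over parabolic types produces the multichain generating function $\prod_{i=1}^\r (kh + d_i)/d_i$ in $\NC(W,c)$ (the level-$k$ Fuss--Catalan number). A standard Möbius inversion in the graded lattice $\NC(W,c)$ then isolates $|\NC(W,c)| = \prod_i (h+d_i)/d_i = \Cat(W)$.

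The main obstacle is the combined degree/monodromy analysis of LL. Computing the generic fiber size rigorously requires a careful Jacobian argument with respect to the quasi-homogeneous structure of $V/W$, and identifying the fibers with reflection factorizations requires Bessis's dual braid monoid together with a delicate analysis of how $\Delta$ degenerates along its singular strata (Garside-theoretic input, and in some exceptional complex cases a residual case-by-case check). Once these geometric facts are in hand, the transition to $|\NC(W,c)|$ is formal generating-function manipulation.
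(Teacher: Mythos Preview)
The paper does not supply its own proof of this theorem; it is quoted as background from Bessis's work and the surrounding text explicitly remarks that the enumeration had ``only been computed case-by-case'' prior to~\cite{galashin2022rational}. So there is nothing in the paper to compare your argument against.

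That said, your sketch is a faithful high-level outline of the cited source: the Lyashko--Looijenga morphism, its degree $\r!\,h^\r/|W|$, and the identification of generic fibers with reduced reflection factorizations of $c$ (hence maximal chains in $\NC(W,c)$) are exactly Bessis's ingredients. Your last step---passing from the chain count to $|\NC(W,c)|$ by a parabolic/Fuss--Catalan refinement and M\"obius inversion---is the right shape but is the place where you are compressing the most; in Bessis this goes through establishing the Chapoton formula for the zeta polynomial of $\NC(W,c)$ via a parabolic recursion, and the ``residual case-by-case check'' you flag is real and substantial in the exceptional complex families. As a sketch of the cited proof there is no genuine gap, only the expected loss of detail.
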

\Cref{sec:comb} further discusses $c$-noncrossing partitions.

\subsubsection{Nonnesting partitions}
Let now $W$ be a Weyl group (a crystallographic real reflection group), with positive roots $\Phi^+$.  The positive root poset is the partial order on $\Phi^+$ defined by $\alpha \leq \beta$ iff $\beta-\alpha$ is a nonnegative sum of positive roots.  The \defn{nonnesting partitions} $\NN(W)$ are the order ideals in the positive root poset~\cite[Remark 2]{reiner1997non}.  The \emph{support} of a nonnesting partition $\nn \in \NN(W)$ is the set $\Supp(\nn)$ of simple roots that lie in $\nn$ (as an order ideal of $\Phi^+$).

The number of nonnesting partitions has been computed uniformly by combining a bijection of Cellini and Papi~\cite[Theorem 1]{cellini2002ad} with an argument of Haiman~\cite[Theorem 7.4.2]{haiman1994conjectures} and an observation of Thiel~\cite[Lemma 8.2]{thiel2016anderson}.

\begin{theorem}[{\cite{cellini2002ad,haiman1994conjectures,thiel2016anderson}}]
	For $W$ a Weyl group, $|\NN(W)| = \Cat(W)$.
\end{theorem}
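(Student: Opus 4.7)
The plan is to chain together the three cited results, since the statement is a packaging of pre-existing ingredients rather than a freshly-proved identity. The first step is to apply the Cellini--Papi bijection, which identifies antichains in the positive root poset---equivalently the order ideals that constitute $\NN(W)$---with a family of objects attached to the affine Weyl group of $W$. In their formulation, an antichain is encoded, via its associated ad-nilpotent ideal in a Borel subalgebra, as a minimal representative of a $W$-orbit on the finite torus $Q^\vee/(h+1)Q^\vee$, or equivalently as a dominant region of a Shi-like hyperplane arrangement.

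The second step is to invoke Haiman's Theorem~7.4.2, which uniformly enumerates $W$-orbits on the finite torus $Q^\vee/(h+1)Q^\vee$ by the product $\prod_{i=1}^{\r}(h+d_i)/d_i$. This count matches the definition of $\Cat(W)$ in \eqref{eq:catalan} on the nose, so once we know that the set Haiman counts is exactly the image of $\NN(W)$ under Cellini--Papi, we are done.

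The third step supplies precisely this missing identification: Thiel's Lemma~8.2 reconciles the combinatorial set of objects produced by Cellini and Papi with the set enumerated by Haiman. Without such a bridge the two counts ostensibly live on different finite sets (one indexed by ideals/antichains, the other by cosets modulo a dilated coroot lattice), and the numerical coincidence would have to be checked case by case.

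I expect the main obstacle to be exactly this reconciliation. The Cellini--Papi construction is rooted in the combinatorics of affine alcoves and ad-nilpotent ideals, whereas Haiman's count is algebraic, controlled by the invariant degrees via Shephard--Todd--Chevalley-type reasoning. Matching the two sets compatibly is what Thiel's observation provides, and it is the nontrivial glue that turns a family of type-by-type verifications into a uniform theorem; beyond this step, the proof is essentially a composition of black boxes.
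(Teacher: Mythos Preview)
Your proposal is correct and matches the paper's approach exactly: the paper does not give its own proof but simply records that the result follows ``by combining a bijection of Cellini and Papi with an argument of Haiman and an observation of Thiel,'' and your write-up is a faithful elaboration of precisely that three-step composition.
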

\Cref{sec:nnps} further discusses nonnesting partitions.

\subsection{History and Incongruity}
\label{sec:history}

Despite the fact that they are both counted by $\mathrm{Cat}(W)$, there are several Incongruities between $\NC(W,c)$ and $\NN(W)$:

\begin{enumerate}
	\item $\NC(W,c)$ is defined for well-generated complex reflection groups, while $\NN(W)$ is only defined for Weyl groups;
	      \item\label{it:2} the definition of $\NC(W,c)$ requires the choice of a Coxeter element, while $\NN(W)$ has no such dependence;
	      \item\label{it:3} When $c$ is a \defn{standard Coxeter element}\footnote{Outside of this introduction, we will always assume that $c$ is a standard Coxeter element.} in a real reflection group---that is, when $c$ is a product of the simple reflections in some order---and $s$ is initial in $c$, the map $w \mapsto s^{-1} w s$ gives a bijection $\NC(W,c) \simeq \NC(W,s^{-1}cs)$ and leads to the \emph{Kreweras complement} and the \emph{Cambrian recurrence} for real $W$, while there is no obvious similar action of the initial $s$ on $\NN(W)$.
\end{enumerate}
Two additional Incongruities have only been very recently resolved in~\cite{galashin2022rational}:
\begin{enumerate}[resume]
	\item\label{it:4} $\NC(W,c)$ had only \emph{Fuss--Catalan} and \emph{Fuss--Dogolon} generalizations, while $\NN(W)$ was easily generalized to rational parameters $p$ coprime to the Coxeter number $h$; %
	\item the enumeration of $\NC(W,c)$ was only case-by-case, while the enumeration of $\NN(W)$ was uniform.
\end{enumerate}

The exact relationship between noncrossing and nonnesting partitions, therefore, remains frustratingly mysterious, and is perhaps the biggest open question in Coxeter--Catalan combinatorics:

\begin{problem}
\label{prob:main_question}
Fix $W$ a finite Weyl group and $c$ a Coxeter element.  Find a ``natural'' bijection between $\NC(W,c)$ and $\NN(W)$.
\end{problem}

Historically---if one interprets ``natural'' as meaning ``uniform''---finding such a bijection was viewed as a promising way to give a uniform proof that $|\NC(W,c)|=\Cat(W)$.  But even with the recent uniform enumeration of $\NC(W,c)$,~\Cref{prob:main_question} remains interesting in its own right.

To our taste, there are two approaches to~\Cref{prob:main_question}: the first approach is based on the case-by-case combinatorial models available in the classical types $A,B,D$~\cite{giraldo2009bijections,stump2010crossings,conflitti2011noncrossing,kim2011new,stump2008non,athanasiadis1998noncrossing}; the second approach was pioneered in~\cite{armstrong2013uniform} based on observations in~\cite{panyushev2009orbits,BessisReiner2011}, and uses a coincidence of cyclic actions (partially resolving Incongruity~\eqref{it:3} above) to induce a bijection.  Our main theorem will refine both of these approaches in the special case of the symmetric group $\S_n$, recovering one of the standard bijections between noncrossing and nonnesting partitions in the case of the linear Coxeter element $(1,2,\ldots,n)$.

\subsection{Cyclic actions on noncrossing and nonnesting partitions}%

\subsubsection{Armstrong-Stump-Thomas}
For $W$ a well-generated complex reflection group, we define the \defn{$c$-Kreweras complement} on the noncrossing partition lattice as the anti-automorphism~\cite[Section 4.2]{Armstrong2009}, \cite{kreweras1972partitions} \begin{align}\label{eq:krew} \Kr_c: \NC(W,c) &\to \NC(W,c) \\ \nonumber \nc &\mapsto \nc^{-1}c. \end{align}  Since $\Kr_c^2(\nc) = c^{-1} \nc c$ and $c$ has order $h$, $\Kr_c$ has order $h$ if $-1 \in W$, and $2h$ otherwise.%

For $W$ a Weyl group, \defn{rowmotion} on nonnesting partitions is the map \begin{align}\label{eq:row} \Roww: \NN(W) &\to \NN(W) \\ \nonumber \nn &\mapsto \min_{\Phi^+} \{\alpha \mid \alpha \not \leq \beta \text{ for any } \beta \in \nn\}.\end{align} Panyushev conjectured that the order of $\Roww$ on $\NN(W)$ was $h$ if $-1 \in W$ and $2h$ otherwise~\cite{panyushev2009orbits}, and Bessis and Reiner refined Panyushev's conjecture by observing that $\Roww$ had the same orbit structure on $\NN(W)$ as $\Kr$ on $\NC(W,c)$~\cite{BessisReiner2011}.  This conjecture was proven by Armstrong, Stump, and Thomas, using a uniformly-stated---but only case-by-case verified---inductive bijection between nonnesting and noncrossing partitions~\cite{armstrong2013uniform}.%

Thus, partitioning the sets $\NC(W,c)$ and $\NN(W)$ into orbits under $\Kr_c$ and $\Roww$ refines the problem of finding a ``natural'' bijection.  But Incongruity~\eqref{it:2} remains---while the definitions of $\NC(W,c)$ and $\Kr_c$ depend on the choice of a Coxeter element, the set $\NN(W)$ and its action $\Roww$ do not depend on any such choice.

\subsubsection{Flips and the Kreweras Complement on Noncrossing Partitions}
Let $W$ be a finite Coxeter group. For $c$ a standard Coxeter element and $\sc$ a particular choice of reduced word for $c$, the \defn{$c$-sorting word} $\w(\sc)$ for $w \in W$ is the leftmost reduced word in simple reflections for $w$ in $\sc^\infty$.  An element is \defn{$c$-sortable} if its $c$-sorting word uses a weakly decreasing subset of simple reflections in each successive copy of $\sc$.  The \defn{weak order} is the poset given by the Cayley graph of $W$ generated by $S$, with the longest element $w_\circ$.  The restriction of the weak order to the $c$-sortable elements is Nathan Reading's $c$-Cambrian lattice, with edges labeled by roots.  The inversion sequence $\inv(\w_\circ(\sc))=[\alpha^{(1)},\alpha^{(2)},\ldots,\alpha^{(N)}]$ gives total ordering on the set of roots of $W$ (see~\Cref{sec:sort_ar} for more details on this ordering).

In~\cite{thomas2019rowmotion}, a subset of the authors showed that it is possible to compute the $c$-Kreweras complement in ``slow motion'' as a sequence of local \emph{flips} on the edges of the $c$-Cambrian lattice in $\inv(\w_\circ(\sc))$ order.

\begin{theorem}[{\cite[Theorem 1.2]{thomas2019rowmotion}}] \label{thm:slow}
	For $W$ a real reflection group, $\Kr_c(\nc)$ can be computed as a sequence of flips on $\Camb(W,c)$.
\end{theorem}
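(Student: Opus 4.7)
The plan is to prove that $\Kr_c$ can be realized as a sequence of $N=|\Phi^+|$ local moves on the Cambrian lattice $\Camb(W,c)$, one for each positive root taken in the order prescribed by $\inv(\w_\circ(\sc))$. This breaks the global operation $\nc \mapsto \nc^{-1}c$ into atomic steps that are readable purely from the labels on the Cambrian lattice.

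The first step is to invoke the Cambrian bijection, which identifies each $c$-sortable element with a unique element of $\NC(W,c)$, so that noncrossing partitions can be encoded by vertices of $\Camb(W,c)$. Since $\w_\circ$ is itself $c$-sortable, the $c$-sorting word $\w_\circ(\sc)$ can be interpreted as a distinguished maximal chain in $\Camb(W,c)$ whose edge sequence of roots $[\alpha^{(1)}, \ldots, \alpha^{(N)}]$ provides a total order on $\Phi^+$ compatible with both the weak order and the Cambrian structure.

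The second step is to define the local \emph{flip} at each $\alpha^{(i)}$: given the current noncrossing partition, inspect the unique cover relation of $\Camb(W,c)$ labeled by $\alpha^{(i)}$ and either leave the partition fixed or toggle it across that edge by a prescribed replacement of a reflection factor in its canonical factorization as a prefix of a reduced word for $c$. Two things need verification here: that this local rule is well-defined (the output is again in $\NC(W,c)$) and that the rule depends only on the edge label $\alpha^{(i)}$ together with the current state, not on any global information. Compatibility with the Cambrian recursion --- restricting $\Camb(W,c)$ to a ``first simple reflection cut'' and matching it with the recursion on $\NC(W,c)$ induced by conjugation $w \mapsto s^{-1} w s$ for an initial simple reflection $s$ of $c$ --- provides the inductive hook needed to single out the correct rule.

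The final step is an induction on $i$ showing that after applying the first $i$ flips the state of the system agrees with a partial Kreweras move corresponding to the prefix of $\w_\circ(\sc)$ processed so far, so that after all $N$ flips one recovers exactly $\nc^{-1}c = \Kr_c(\nc)$. The main obstacle is isolating the correct local rule and proving its consistency: ensuring that each flip yields a valid noncrossing partition and that the $N$-fold composition assembles into the Kreweras complement rather than some cognate antimorphism. This is the combinatorial heart of the argument and is carried out in detail in~\cite{thomas2019rowmotion}.
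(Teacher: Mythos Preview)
The paper does not prove this theorem at all: it is quoted verbatim as \cite[Theorem 1.2]{thomas2019rowmotion} and serves only as background motivating the definition of the Kroweras complement. There is therefore no ``paper's own proof'' to compare against.

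Your proposal is a plausible high-level roadmap for the argument in \cite{thomas2019rowmotion}, and you yourself defer the substantive work there in your final sentence. That is appropriate given the context, but be aware that what you have written is not a proof: the crucial ingredients---the precise definition of a flip on $\Camb(W,c)$, the bijection between $c$-sortable elements and $\NC(W,c)$ compatible with the edge labeling, and the inductive verification that the composite of flips equals $\nc^{-1}c$---are all asserted rather than supplied. In particular, your description of a flip as ``toggling across an edge by a prescribed replacement of a reflection factor'' is too vague to be checked, and the ``partial Kreweras move'' in your induction is never defined. If the intent is simply to cite the result, a one-line reference suffices; if the intent is to sketch the proof, you would need at minimum a precise statement of the flip and of the intermediate invariant tracked by the induction.
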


\subsubsection{Toggles and the Kroweras Complement on Nonnesting Partitions}
As with the $c$-Kreweras complement, rowmotion can also be written as a sequence of local moves~\cite{cameron1995orbits,striker2012promotion}.  A \emph{toggle} $\t_\alpha(\nn)$ of a nonnesting partition $\nn$ at a positive root $\alpha$ either adds $\alpha$ to $\nn$ (when $\alpha \not \in \nn$) or removes $\alpha$ from $\nn$ (when $\alpha \in \nn$), provided that the result is again a nonnesting partition.   For nonnesting partitions, $\row$ can be computed by toggling each root of the root poset in order of height (or by \emph{row}).

We are now in a position to address Incongruity~\eqref{it:2}: in contrast to the $c$-Kreweras complement, $\row$ has no dependence on the Coxeter element $c$.  We resolve Incongruity~\eqref{it:2} by mirroring the sequence of local moves used in the ``slow motion'' $c$-Kreweras complement of~\Cref{thm:slow}---but replacing flips by toggles~\cite{williams2013cataland,williams2014bijactions}:
\begin{align}\label{eq:krow} \Krow_c: \NN(W) &\to \NN(W) \\ \nonumber \nn &\mapsto \left(\t_{\alpha^{(N)}}\circ\cdots\circ\t_{\alpha^{(2)}}\circ\t_{\alpha^{(1)}}\right)(\nn).\end{align}
As further explained in~\Cref{sec:Kroweras}, we call this map the \defn{$c$-Kroweras complement}.

\subsection{Main Theorem}

In~\Cref{sec:bij}, we construct a novel bijection between $c$-noncrossing partitions and nonnesting partitions for the symmetric group $\S_n$ using the combinatorics of certain kissing families of lattice paths whose local behavior is dependent on a novel definition of \emph{$c$-charmed roots}.  When $c$ is a standard Coxeter element of $\S_n$, $c$ consists of a single cycle $(\a_1,\a_2, \ldots, \a_m,\a_{m+1} \dots, \a_n)$ with an initial increasing subsequence $1 = \a_1 < \a_2 < \dots < \a_m = n$, followed by a decreasing sequence $n = \a_m > \a_{m+1} > \dots > \a_n$ of the remaining unused entries.  We say that the reflection $(\a_i,\a_j)$ is \defn{$c$-charmed} if $1 < i < m$ and $m < j \leq n$ (see~\Cref{sec:coxeter_els} for a more complete discussion), and \defn{$c$-ordinary} otherwise.

Our bijection is the unique equivariant support-preserving bijection between $\NC(W,c)$ under the $c$-Kreweras complement and $\NN(W)$ under the $c$-Kroweras complement, and is based on \emph{intimate families} of lattice paths (see \Cref{sec:intimate}).

\begin{theorem}
	\label{thm:main_theorem}
	Let $\S_n$ be the symmetric group, and fix a standard Coxeter element $c \in \S_n$. Then there is a unique bijection $\Charm_c \colon \NC(\S_n,c) \to \NN(\S_n)$ satisfying
	\begin{itemize}
		\item $\Charm_c \circ \Krew_c = \Krow_c \circ \; \Charm_c$ and
		\item $\Supp = \Supp \circ \Charm_c$.
	\end{itemize}
\end{theorem}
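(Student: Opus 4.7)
The plan is to build $\Charm_c$ explicitly and then to deduce uniqueness from an orbit-rigidity argument.  For existence, $c$-noncrossing partitions of $\S_n$ will be encoded as certain ``intimate'' families of lattice paths whose kissing pattern is dictated by the cycle shape of $c$, while nonnesting partitions of $\S_n$ will be encoded (classically) as families of non-crossing lattice paths.  The map $\Charm_c$ is then defined locally: each kiss between two paths, labeled by a reflection $(\a_i,\a_j)$, is resolved in one of two ways according to whether that reflection is $c$-charmed (i.e.\ $1<i<m$ and $m<j\le n$) or $c$-ordinary.  The resulting map is a bijection because the local rule is invertible in each case, and support preservation is built into the construction since the simple reflections appearing in a reduced word for $\nc$ correspond precisely to the simple roots contained in $\Charm_c(\nc)$.

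Equivariance is established step-by-step, using the two ``slow motion'' descriptions already available.  By \Cref{thm:slow}, $\Krew_c$ is the composition of local flips along $\Camb(\S_n,c)$ taken in the inversion order $\inv(\w_\circ(\sc))$, and by~\eqref{eq:krow}, $\Krow_c$ is the analogous composition of toggles in the same order.  The key lemma is that, at each root $\alpha^{(i)}$, the local flip on an intimate family corresponds, via $\Charm_c$, to the local toggle at $\alpha^{(i)}$ on the associated nonnesting partition.  The $c$-charmed/ordinary dichotomy is designed precisely so that this local correspondence is forced, and the main combinatorial work lies in verifying the finite list of cases of this local lemma.

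The subtlest part, and the expected main obstacle, is uniqueness.  If $\phi,\psi$ are two equivariant, support-preserving bijections $\NC(\S_n,c)\to\NN(\S_n)$, then $h\coloneqq\phi\circ\psi^{-1}$ is an equivariant, support-preserving self-bijection of $\NN(\S_n)$.  Any such $h$ must act on each $\Krow_c$-orbit as a power of $\Krow_c$, so uniqueness reduces to the rigidity statement: for every orbit $O$ and every $\nn\in O$, the only $k\in\{0,1,\ldots,|O|-1\}$ with $\Supp(\Krow_c^k(\nn))=\Supp(\nn)$ is $k=0$.  My plan to prove this rigidity is to identify, in each orbit, a distinguished element whose support changes in a controlled way after a single application of $\Krow_c$, thereby forcing the period of the support sequence along the orbit to equal the full orbit length.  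Granted this, $h=\mathrm{id}$ and the bijection $\Charm_c$ is unique, completing the theorem.
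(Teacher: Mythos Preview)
Your construction of $\Charm_c$ via intimate families is in line with the paper, but your plan for equivariance differs substantially from the paper's, and your uniqueness argument has genuine gaps.

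\textbf{Equivariance.} The paper does \emph{not} prove equivariance by matching individual flips on $\Camb(\S_n,c)$ with individual toggles through $\Charm_c$. Instead it uses \emph{Cambrian induction}: equivariance is checked directly for the linear Coxeter element $c_1=s_1\cdots s_{n-1}$ (where $\Krow_{c_1}$ is Striker--Williams promotion and $\Charm_{c_1}$ reduces to a classical bijection), and then, for $c\xrightarrow{k}c'$, one constructs conjugating maps $\alpha_{c,k}(\nc)=s_k\nc s_k$ on the noncrossing side and an explicit toggle product $\beta_{c,k}$ on the nonnesting side satisfying $\Krew_{c'}\circ\alpha_{c,k}=\alpha_{c,k}\circ\Krew_c$, $\Krow_{c'}\circ\beta_{c,k}=\beta_{c,k}\circ\Krow_c$, and $\beta_{c,k}\circ\Charm_c=\Charm_{c'}\circ\alpha_{c,k}$. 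Equivariance for $c'$ then follows from equivariance for $c$ by composing around the resulting commutative cube. Your proposed local flip-to-toggle lemma would be an interesting alternative, but you give no argument for it, and there is no a priori reason a single Cambrian flip should correspond under $\Charm_c$ to a single toggle; the paper's choice of an inductive rather than a step-by-step proof suggests this direct route is not straightforward.

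\textbf{Uniqueness.} Two problems. First, your claim that an equivariant self-bijection $h$ of $\NN(\S_n)$ must act on each $\Krow_c$-orbit as a power of $\Krow_c$ presupposes that $h$ sends each orbit to itself; nothing you have said rules out $h$ permuting distinct orbits of the same length whose elements have matching supports. Second, your rigidity statement---that within an orbit no two elements share the same support---is strictly stronger than what is needed and is false in general: for large $n$ the vast majority of nonnesting partitions have full support while orbits have size at most $2n$, so pigeonhole forces many orbits to contain several full-support elements. The paper avoids both issues by proving, on the \emph{noncrossing} side, that the entire sequence $\bigl(\Supp(\Kr_c^i(w))\bigr)_{i=0}^{2n-1}$ determines $w$: for any $i,j$ in distinct blocks of $w$ one can coarsen to a two-block partition and rotate so that some $\alpha_k$ is missing from the support, which lets one reconstruct the block structure from the support sequence. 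Injectivity of this ``orbit-support'' map simultaneously forces orbit preservation and within-orbit rigidity, and pins down $\Charm_c$ as the unique map with $\orbsupp(w)=\orbsupp(\Charm_c(w))$.
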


As we describe in~\Cref{cor:tails}, it is easy to directly read the blocks of the noncrossing partition from the corresponding intimate family.  We prove \Cref{thm:main_theorem} in~\Cref{sec:proof} using \defn{Cambrian induction}---that is, we will show
\begin{itemize}
	\item the \emph{base case}: the theorem holds for the linear Coxeter element $\co = s_1s_2 \cdots s_{n-1}$.  In this case, $\Krow_{\co}$ is order ideal promotion, as defined by Striker and Williams \cite{striker2012promotion}, and $\Charm_{\co}$ recovers a standard bijection; and
	\item the \emph{inductive step}: suppose \Cref{thm:main_theorem} holds for $c$.  We argue that if $s$ is initial in $c$ and $c'=scs$, then \Cref{thm:main_theorem} also holds for $c'$---we understand how $\Charm_{c'}$ differs from $\Charm_{c}$ using the relation between the set of $c$-charmed and $c'$-charmed roots in~\Cref{prop:c_charmed}.
\end{itemize}
Since all standard Coxeter elements are conjugate by a sequence of conjugations by initial simple reflections, the theorem, therefore, holds for all standard Coxeter elements.  Our proof strategy is summarized in~\Cref{fig:outline}.

A priori, there is no reason for an arbitrary product of toggles to have well-behaved order.  As a consequence of~\Cref{thm:main_theorem}, we have the following surprising result.
\begin{corollary}
	For any standard Coxeter element $c \in \S_n$, the order of $\Krow_c$ on $\NN(W)$ is $2h$.
\end{corollary}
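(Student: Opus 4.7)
The plan is to invoke \Cref{thm:main_theorem} in order to transfer the question from the product-of-toggles map $\Krow_c$ on $\NN(\S_n)$ to the $c$-Kreweras complement $\Krew_c$ on $\NC(\S_n,c)$, where the order is already under control. Since $\Charm_c$ is a bijection satisfying $\Charm_c \circ \Krew_c = \Krow_c \circ \Charm_c$, the maps $\Krew_c$ and $\Krow_c$ are conjugate as permutations of finite sets. In particular, they have the same cycle type and hence the same order.

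It therefore suffices to show that $\Krew_c$ has order $2h$ on $\NC(\S_n,c)$ (assuming $n \geq 3$, so that $-1 \notin \S_n$). This is the fact recalled in \Cref{sec:intro} immediately after \eqref{eq:krew}, but it admits a short direct argument. The identity $\Krew_c^2(\nc) = c^{-1}\nc c$ gives $\Krew_c^{2k}(\nc) = c^{-k}\nc c^k$ for every $k \geq 1$, so the order of $\Krew_c$ is $2k$ where $k$ is the smallest positive integer for which $c^k$ centralizes every element of $\NC(\S_n,c)$. Because a standard Coxeter element is a product of all the simple reflections, each simple reflection belongs to $\NC(\S_n,c)$; since the simple reflections generate $\S_n$, any such $c^k$ must lie in $Z(\S_n)$, which is trivial for $n \geq 3$. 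This forces $c^k = e$ and hence $k = h$.

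The only genuine difficulty in this approach is already packaged inside \Cref{thm:main_theorem}. Without an equivariant bijection, there is no obvious handle on the order of the highly non-commutative composition $\t_{\alpha^{(N)}} \circ \cdots \circ \t_{\alpha^{(1)}}$ of toggles in the particular sequence prescribed by $\inv(\w_\circ(\sc))$, which is precisely why the corollary is flagged in the paper as ``surprising.'' Given \Cref{thm:main_theorem}, however, the corollary collapses to the elementary centralizer computation above.
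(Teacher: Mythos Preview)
Your proof is correct and is exactly what the paper intends: the corollary is stated immediately after \Cref{thm:main_theorem} as a direct consequence, with no separate proof, so conjugating $\Krow_c$ to $\Krew_c$ via $\Charm_c$ and quoting the known order of $\Krew_c$ is the whole argument. One minor remark on your direct computation: from $\Krew_c^{2k}(\nc)=c^{-k}\nc c^k$ you deduce that $2h$ is the least \emph{even} exponent annihilating $\Krew_c$, but you have not excluded an odd order when $h$ itself is odd; this is handled either by your citation of the fact after \eqref{eq:krew}, or by noting that $\Krew_c$ swaps $e$ and $c$ and hence has even order.
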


\begin{remark}
	The statement of the main theorem in Armstrong-Stump-Thomas~\cite{armstrong2013uniform} can be obtained from the statement of our~\Cref{thm:main_theorem} by replacing $\Krow_c$ by $\row$, and replacing the symmetric group by any finite Weyl group. In particular, in the result from~\cite{armstrong2013uniform}, only one toggle sequence is ever employed in the poset of positive roots.   Thus, our main theorem is much richer than the~\cite{armstrong2013uniform} result (when restricted to the symmetric group) because we consider a different toggle sequence for each standard Coxeter element.  Indeed, the fact that the Coxeter element only plays a role on the noncrossing side of the bijection means that in~\cite{armstrong2013uniform}, it is sufficient to consider one preferred Coxeter element (a \emph{bipartite} Coxeter element), and find one corresponding bijection---whereas here we construct truly different bijections for each Coxeter element.
\end{remark}

\subsection{Future work}

Using initial and final simple reflections, the notion of $c$-charmed root can be uniformly extended to all finite Weyl groups.  We expect that our $c$-charmed bijections can also be extended; the first step is to find analogs of~\Cref{prop:nn_induction}---that is, to determine the correct toggle group elements that conjugate $\Krow_c$ to $\Krow_{c'}$ for $c'=scs$.  As a simple first case, we show in~\Cref{sec:typec} that we can use a simple folding argument to extend our work to type $C_n$.

Although there are now explicit constructions of noncrossing and nonnesting partitions for Weyl groups and any rational parameter $p$ coprime to $h$~\cite{galashin2022rational,haiman1994conjectures},  only in type $A$ for the linear Coxeter element do we have explicit diagrammatic models coming from rational Dyck paths~\cite{ArmstrongRhoadesWilliams2013,bodnar2016cyclic,bodnar2018rational,bodnar2019rational}.  It would be natural to try to extend our charmed bijections to these rational models.

\begin{figure}[htbp]
	\begin{tikzpicture}[scale=4.3]
		\node[align=center,rectangle,draw] (00) at (0,0) {$\NN(\S_n)$\\ Def.~\ref{def:nonnesting}};
		\node[align=center,rectangle,draw] (30) at (3,0) {$\NN(\S_n)$\\ Def.~\ref{def:nonnesting}};
		\node[align=center,rectangle,draw] (11) at (1,1) {$\NN(\S_n)$\\ Def.~\ref{def:nonnesting}};
		\node[align=center,rectangle,draw] (21) at (2,1) {$\NN(\S_n)$\\ Def.~\ref{def:nonnesting}};
		\node[align=center,rectangle,draw] (03) at (0,3) {$\NC(\S_n,c)$ \\ Def.~\ref{def:noncrossing}};
		\node[align=center,rectangle,draw] (33) at (3,3) {$\NC(\S_n,c')$\\ Def.~\ref{def:noncrossing}};
		\node[align=center,rectangle,draw] (12) at (1,2) {$\NC(\S_n,c)$\\ Def.~\ref{def:noncrossing}};
		\node[align=center,rectangle,draw] (22) at (2,2) {$\NC(\S_n,c')$\\ Def.~\ref{def:noncrossing}};

		\node at (1.5,2.5) {Prop.~\ref{prop:nc_induction}};
		\node at (1.5,0.5) {Prop.~\ref{prop:nn_induction}};
		\node at (1.5,1.5) {Prop.~\ref{prop:inductive step}};
		\node at (.5,1.5) {Sec.~\ref{sec:cambrian_induction}};
		\node at (2.5,1.5) {Thm.~\ref{thm:charm}};

		\draw[->] (00) -- node[rectangle,fill=white,align=center] {$\beta_{c,k}$ \\ Eq.~\eqref{eq:beta}} (30);
		\draw[->] (11) -- node[rectangle,fill=white,align=center] {$\beta_{c,k}$\\ Eq.~\eqref{eq:beta}} (21);
		\draw[->] (12) -- node[rectangle,fill=white,align=center] {$\alpha_{c,k}$ \\ Eq.~\eqref{eq:alpha}} (22);
		\draw[->] (03) -- node[rectangle,fill=white,align=center] {$\alpha_{c,k}$ \\ Eq.~\eqref{eq:alpha}} (33);
		\draw[->] (03) -- node[rectangle,fill=white,align=center] {$\Charm_c$ \\ Def.~\ref{def:bij}} (00);
		\draw[->] (12) -- node[rectangle,fill=white,align=center] {$\Charm_c$\\ Def.~\ref{def:bij}} (11);
		\draw[->] (33) -- node[rectangle,fill=white,align=center] {$\Charm_{c'}$\\ Def.~\ref{def:bij}} (30);
		\draw[->] (22) -- node[rectangle,fill=white,align=center] {$\Charm_{c'}$\\ Def.~\ref{def:bij}} (21);
		\draw[->] (12) -- node[rectangle,fill=white,align=center] {$\Krew_c$ \\ Def.~\ref{def:krew}} (03);
		\draw[->] (22) -- node[rectangle,fill=white,align=center] {$\Krew_{c'}$\\ Def.~\ref{def:krew}} (33);
		\draw[->] (11) -- node[rectangle,fill=white,align=center] {$\Krow_c$ \\ Def.~\ref{def:krow}} (00);
		\draw[->] (21) -- node[rectangle,fill=white,align=center] {$\Krow_{c'}$\\ Def.~\ref{def:krew}} (30);

	\end{tikzpicture}
	\caption{The commutative cube underlying the structure of the paper.  Each vertex of the cube is the set of noncrossing partitions with respect to some Coxeter element, or the set of nonnesting partitions.  The edges of the cube are bijections between these sets.  Each face of the cube is commutative---\Cref{thm:charm} uses Cambrian induction to infer commutativity of the rightmost face from commutativity of all other faces.  \Cref{ex:running_example} replaces each vertex of this commutative cube by an element of the corresponding set used in the running example of the paper.}
	\label{fig:outline}
\end{figure}
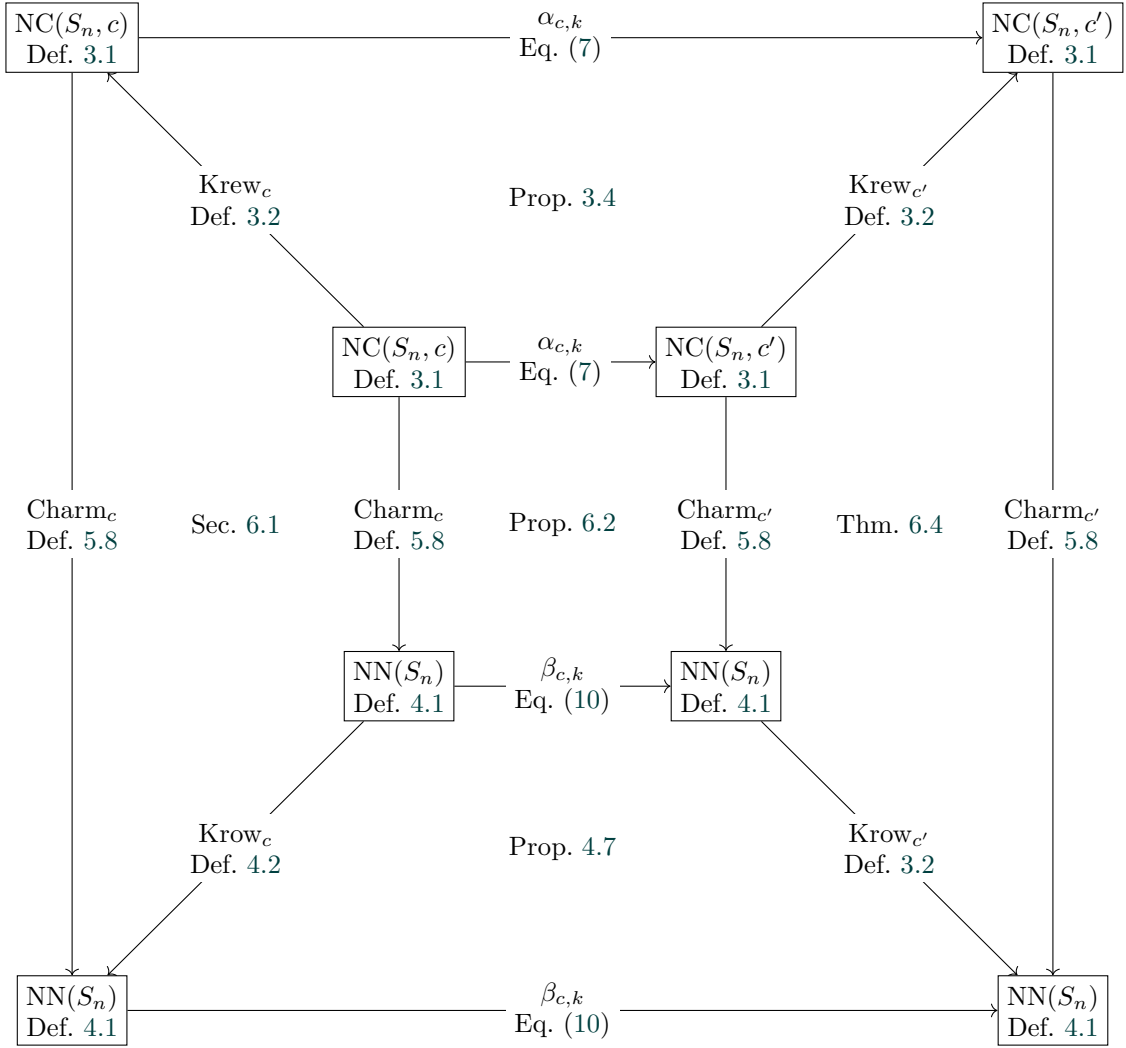

\subsection*{Acknowledgements}
B.D.~thanks the Institut des Sciences Math\'ematiques of Canada for its partial support.
G.F. was partially supported by the Canada Research Chairs program.
F.S.-A.\ acknowledges the financial support from the Austrian Science Foundation FWF, grant J 4387.
H.T.~was partially supported by the Canada Research Chairs program and an NSERC Discovery Grant.
N.W.~was partially supported by Simons Foundation Collaboration Grant No.~585380.

\section{Background}
\label{sec:background}

Write $[n] \coloneqq \{1,2,\ldots,n\}$; we work modulo $n$ so that for $0< i<n$, we may write $-i$ for $n-i$ and $n+1$ for $1$.  Since this paper only deals with the permutation group $\S_n$, we will specialize our explanations unless the general case causes no additional complications.  For a more general introduction on Coxeter systems, we refer the reader to~\cite{Humphreys1990}.%

\subsection{Coxeter groups}
Fix $(W,S)$ a finite Coxeter system, where $S$ is the set of \defn{simple reflections}.  For $W=\S_n$, we reserve the symbols $s_i$ for the simple transpositions $(i,i+1)$.

The \defn{length} of an element $w \in W$ is \[ \ell_S(w) \coloneqq \min \{\ell \in \mathbb{N} \mid w = t_1 \cdots t_\ell \text{ with } t_i \in S \}. \]  For $W=\S_n$ and $w \in W$, $\ell_S(w)$ is equal to the number of \defn{inversions} of $w$: \[\ell_S(w) = \{(i,j) \mid 1\leq i<j\leq n \text{ and } w(i)>w(j)\}.\]
For $w = t_1 \cdots t_\ell$ with $t_i \in S$ and $\ell = \ell_S(w)$, the expression $\mathsf{w}=[t_1, \ldots, t_k]$ is a \defn{reduced $S$-word} for $w$.
The \defn{(right) weak order} on $W$ is defined by \[ v \leq_S w \iff \ell_S(w) = \ell_S(v) + \ell_S(v^{-1}w).\] There is a unique \defn{longest element} $w_\circ$ of maximum length, which is the reverse permutation in $\S_n$; this longest element is greater than all other elements in weak order.

The \defn{reflections} in $W$ are the elements of the set $T \coloneqq \{ wsw^{-1} \mid s \in \S, \; w \in W \}.$
We use the reflections of $W$ to define the \defn{absolute length} of an element $w \in W$ to be \[ \ell_T(w) \coloneqq \min \{\ell \in \mathbb{N} \mid w = r_1 \cdots r_\ell \colon r_i \in T \}. \] It is immediate that $\ell_T(w) \leq \ell_S(w)$.  The \defn{absolute order} on $W$ is the partial order on $W$ induced by absolute length (in the same way as length induces weak order):
\[ v \leq_T w \iff \ell_T(w) = \ell_T(v) + \ell_T(v^{-1}w).\]

For $\S_{n}$, the set of reflections is the set of all transpositions $(i,j)$.  The absolute length of a permutation $w \in \S_{n}$ is $n$ minus the number of cycles of $w$.

The \defn{support} of an element $w \in W$ is the set of simple reflections below $w$ in absolute order: \[\Supp(w) = \{s \in S \mid s \leq_T w\}.\]

\subsection{Roots}
Let $\{e_i\}_{i=1}^n$ be the standard basis of $\mathbb{R}^n$.  The \defn{positive roots} of type $A_{n-1}$ are the following normal vectors to the reflecting hyperplanes in the braid arrangement \begin{equation}\label{eq:roots}
	\alpha_{i,j} \coloneqq e_j-e_i \text{ with } 1 \leq i < j \leq n.
\end{equation}

To reduce subscripts, we will often denote $\alpha_{i,j}$ by its corresponding reflection $(i,j) \in S_n$; we will always use the convention that $i<j$ with this notation.  The \defn{root poset} of type $A_{n-1}$ is the poset $\Phi^+ \coloneqq \Phi^+(A_{n-1})$ defined on the positive roots with covering relations $(i+1,j) \lessdot (i,j) \lessdot (i,j+1)$.  The minimal elements of $\Phi^+$ are the \defn{simple roots} $\alpha_i$ for $1 \leq i < n$, which we also denote by $(i,i+1)$.  The \defn{support} of a root $(i,j)$ is the set of simple roots $\Supp(i,j)$ less than $(i,j)$ in the root poset.  The root poset of type $A_8$ is illustrated in~\Cref{fig: root poset}.%

\begin{figure}[htbp]
	\begin{center}
		\begin{tikzpicture}[scale=.95]
			\pgfmathtruncatemacro{\n}{9};
			\pgfmathtruncatemacro{\none}{\n-1};
			\pgfmathtruncatemacro{\ntwo}{\n-2};
			\pgfmathtruncatemacro{\nthree}{\n-3};
			\foreach \yy in {0,...,\ntwo}{
					\pgfmathtruncatemacro{\y}{\none-\yy};
					\foreach \x in {1,...,\y}{
							\pgfmathtruncatemacro{\i}{2*\x+\yy};
							\pgfmathtruncatemacro{\j}{\yy};
							\pgfmathtruncatemacro{\k}{\x+\yy+1};
							\node (\x\k) at (\i,\j) {$(\x\k)$};
						};
				};
			\foreach \yy in {0,...,\nthree}{
					\pgfmathtruncatemacro{\y}{\ntwo-\yy};
					\foreach \x in {1,...,\y}{
							\pgfmathtruncatemacro{\i}{2*\x+\yy};
							\pgfmathtruncatemacro{\j}{\yy};
							\pgfmathtruncatemacro{\k}{\x+\yy+1};
							\pgfmathtruncatemacro{\xone}{\x+1};
							\pgfmathtruncatemacro{\xmone}{\x-2};
							\pgfmathtruncatemacro{\kone}{\k+1};
							\draw (\x\k) -- (\x\kone);
							\draw (\xone\kone) -- (\x\kone);
						};
				};
			\foreach \i in {3,4,7}{
					\foreach \j in {2,5,6,8}{
							\pgfmathtruncatemacro{\x}{min(\i,\j)};
							\pgfmathtruncatemacro{\y}{max(\i,\j)};
							\draw (\x\y) node[opacity=0.5,yshift=-.3em] {\bigheart};
						};
				};
		\end{tikzpicture}
	\end{center}
	\caption{\label{fig: root poset} The Hasse diagram of the positive root poset $\Phi^+$ of type $A_8$.  The label $(ij)$ represents the positive root $e_j - e_i$.  For $c=s_2s_1s_3s_6s_5s_4s_8s_7$, the $c$-charmed roots from~\Cref{def:charmed} are marked using hearts.}
\end{figure}
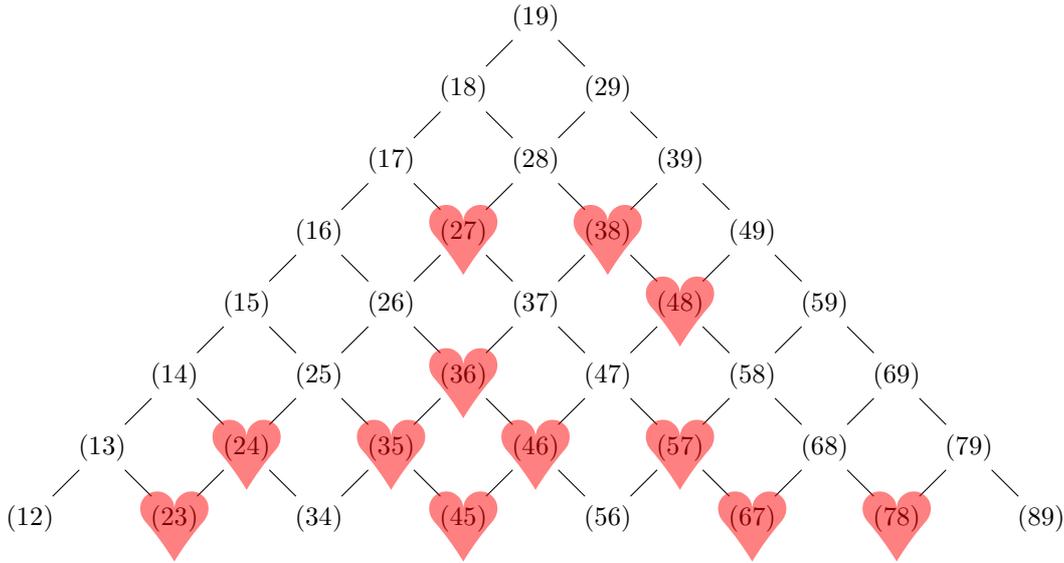

\subsection{Coxeter elements and charmed roots}
\label{sec:coxeter_els}

\begin{definition}
	\label{def:coxeter_element}
	A \defn{standard Coxeter element} $c$ is a product of the simple reflections in any order.
\end{definition}

To ease notation, we reserve the symbols $r_i$ to refer to a fixed ordering $\sc \coloneqq [r_1,r_2,\ldots,r_{n-1}]$ of $S$ and define $c \coloneqq r_1 r_2 \cdots r_{n-1}$.  We also reserve the symbol $s_k = r_1$ for the first simple reflection in the chosen reduced word, and write $c' = r_2 \cdots r_{n-1} r_1$.  We will drop the word ``standard'' when talking about standard Coxeter elements, as we only use standard Coxeter elements in this paper.

We say that a simple reflection $s$ is \defn{initial} in $c$ if $\ell_S(s c) \leq \ell_S(c)$, and that a simple reflection $s$ is \defn{final} in $c$ if $\ell_S(c s) \leq \ell_S(c)$.   If $s=s_k=(k,k+1)$ is initial in $c$, then $c' = scs$ is also a Coxeter element of $\S_n$, and we will denote this by the writing $c \xrightarrow{k} c'$.  It is a basic fact for finite Coxeter groups---relying only on the fact that the underlying Dynkin diagram is a tree---that all standard Coxeter elements are conjugate~\cite[Section 3.16]{Humphreys1990}.

\begin{lemma}[{\cite[Lemma 1.7]{reading2007clusters}}]\label{lem:initial_conj}
	Let $c$ and $c'$ be two Coxeter elements of a finite Coxeter group $W$.  Then $c$ and $c'$ are conjugate by a sequence of conjugations by initial simple reflections: \[c=c^{(0)} \xrightarrow{k_1} c^{(1)} \xrightarrow{k_2} c^{(2)} \xrightarrow{k_3} \cdots \xrightarrow{k_m} c^{(m)} = c'.\]
\end{lemma}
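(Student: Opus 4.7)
The plan is to translate the statement into a graph-theoretic claim about orientations of the Dynkin diagram and then prove that claim by induction. Each standard Coxeter element $c$ determines an orientation of the Dynkin diagram $D$ of $W$ as follows: given a reduced expression $c = s_{a_1}\cdots s_{a_n}$, one orients each edge $\{s,t\}$ of $D$ from $s$ to $t$ when $s$ precedes $t$ in the expression. Because each simple reflection appears exactly once in such an expression, no braid moves are available and any two reduced words for $c$ differ only by commutations of simple reflections that are non-adjacent in $D$; hence the relative order of any two adjacent simple reflections is an invariant of $c$ and the orientation is well-defined. Under this bijection, $s$ is initial in $c$ iff $s$ is a source in the orientation, and conjugation $c \mapsto scs$ by an initial $s$ moves $s$ from the front of any reduced expression to its end, which reverses every edge incident to $s$ (the old source becomes a sink).

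Since $W$ is finite, $D$ is a forest, and the reducible case splits over components, so it suffices to prove the following purely graph-theoretic claim: for any tree $T$, any two orientations of $T$ are connected by a sequence of source-flips, where a source-flip at a source $v$ reverses every edge incident to $v$. I would prove this by induction on $|V(T)|$. For the inductive step I would pick a leaf $u$ of $T$ with unique neighbor $v$ and unique incident edge $e$, set $T' = T \setminus \{u\}$, and, given orientations $O$ and $O'$, execute two phases: first reach from $O$ an orientation agreeing with $O'$ on $e$, then apply the inductive hypothesis inside $T'$ and lift to $T$.

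The key lifting trick is that a source-flip at $w \neq v$ in $T'$ is also a source-flip in $T$, while a source-flip at $v$ in $T'$ can be simulated in $T$ without disturbing $e$ by the two-move sequence ``flip $u$, flip $v$'' (when $e$ currently points from $u$ to $v$) or ``flip $v$, flip $u$'' (when $e$ currently points from $v$ to $u$); a short case check verifies that in each case the vertex being flipped is indeed a source at that moment, that $e$ returns to its original orientation, and that the net effect on the remaining edges at $v$ is exactly the desired flip. For the first phase, there are three sub-cases: $O(e) = O'(e)$ (nothing to do); $u$ is a source in $O$ and the two orientations disagree on $e$ (a single flip of $u$ reverses $e$); and the remaining case where $u$ is a sink in $O$ and $e$ must be reversed, which is the subtle one — I would first use the inductive hypothesis on $T'$ (invoked via the lifting trick above so that $O(e)$ is preserved throughout) to reach an orientation in which $v$ is a source of $T'$, after which $v$ is a source in $T$ as well, and a single flip of $v$ reverses $e$.

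The main obstacle is exactly this last sub-case: one has to invoke the inductive hypothesis once just to arrange for $v$ to be a source of $T$ before the ``main'' lifting phase can run, and one has to verify carefully that the lifting trick preserves $e$ at every intermediate step so that this recursive use of the hypothesis is legal. Once the trick and its case analysis are in place, the induction runs mechanically, and combining it with the bijection between Coxeter elements and orientations gives the sequence of initial conjugations from $c$ to $c'$ asserted by the lemma.
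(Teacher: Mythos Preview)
The paper does not prove this lemma; it is quoted with a citation to Reading's paper \emph{Clusters, Coxeter-sortable elements and noncrossing partitions} and used as a black box. So there is nothing to compare your argument against in this paper.

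Your argument is correct and is essentially the standard proof. The translation to acyclic orientations of the Dynkin diagram is exactly how this result is usually framed: reduced words for a Coxeter element correspond to linear extensions of an acyclic orientation of the diagram, sources correspond to initial simple reflections, and conjugation by an initial $s$ is a source-to-sink flip. Your inductive lifting trick on a leaf is a clean way to establish transitivity of source-flips on orientations of a tree, and your case analysis is sound, including the delicate Case~C where you first use the inductive hypothesis (via lifted flips that preserve the leaf edge) to make the neighbor $v$ a source before performing the single unlifted flip at $v$.

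If you want a slightly shorter route to the same conclusion, one can argue as follows: any acyclic orientation of a tree on $n$ vertices admits a linear extension $v_1,\ldots,v_n$; flipping the vertices in that order is a sequence of source-flips that returns to the original orientation after reversing every edge exactly twice, and in the course of this sequence one passes through \emph{every} cyclic rotation of the corresponding Coxeter word. In particular one reaches the orientation in which a chosen leaf $u$ is a sink, and from there removes $u$ and inducts. This avoids the two-phase structure but is morally the same idea as yours.
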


In $\S_n$ and for $1 \leq k < n$, we write $c_k$ for the Coxeter element with $s_k$ as its unique initial simple reflection---that is, $c_k=s_k s_{k+1}\cdots s_{n-1} s_{k-1}s_{k-2} \cdots s_1$.  As a special case, the \defn{linear Coxeter element} $c_1=s_1 s_2\cdots s_{n-1}$ has cycle notation $(1,2,\ldots,n)$.

More generally, it is easy to show that the cycle notation of any Coxeter element in the symmetric group has a particularly simple form: $c \in \S_n$ consists of a single cycle with an initial increasing subsequence starting at $1$ and ending at $n$, followed by a decreasing sequence of the remaining unused entries.%

\begin{proposition}
	An element $w \in \S_n$ is a Coxeter element if and only if it has cycle notation \[w = (\a_1,\a_2, \ldots, \a_m,\a_{m+1} \dots, w_n),\] where $1 = \a_1 < \a_2 < \dots < \a_m = n$ and $n = \a_i > \a_{m+1} > \dots > \a_n>\a_1=1$.
\end{proposition}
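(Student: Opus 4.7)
The plan is to reduce the statement to a single implication via a cardinality count, then to prove that implication by induction on $n$. Both sides have cardinality $2^{n-2}$: on the group-theoretic side, standard Coxeter elements of $\S_n$ correspond to commutation classes of reduced words in $\{s_1,\ldots,s_{n-1}\}$, and since $s_i,s_j$ commute whenever $|i-j|\geq 2$, such a class is recorded by the $n-2$ independent choices of which of $s_{i-1},s_i$ appears first (for $i=2,\ldots,n-1$); on the combinatorial side, cycles of the prescribed shape are parametrized by the subset of $\{2,\ldots,n-1\}$ lying in the ascending run from $1$ to $n$. So it is enough to prove the forward implication: every standard Coxeter element has a cycle of the prescribed shape.

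For this I would induct on $n$, with $n\leq 2$ immediate. Fix a reduced expression for a Coxeter element $c\in\S_n$; each simple reflection appears exactly once. Because $s_{n-1}$ commutes with every $s_i$ for $i\leq n-3$, the only letter that can block its movement is the unique occurrence of $s_{n-2}$, and I can always slide $s_{n-1}$ to whichever end of the word lies on the opposite side of $s_{n-2}$. This produces an expression $c = c_0\, s_{n-1}$ or $c = s_{n-1}\, c_0$, where $c_0$ is a standard Coxeter element of the parabolic subgroup $\langle s_1,\ldots,s_{n-2}\rangle\cong\S_{n-1}$. By induction, the cycle of $c_0$ takes the form $(1, b_2,\ldots,b_k = n-1,b_{k+1},\ldots,b_{n-1})$ with ascending-then-descending shape about the peak $n-1$.

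A direct computation then finishes the induction. Right-multiplication by $s_{n-1}=(n-1,n)$ exchanges the images of $n-1$ and $n$ under $c_0$, which inserts $n$ into the cycle immediately after $n-1$; left-multiplication inserts $n$ immediately before $n-1$. In either case $n$ lands next to the old peak $n-1$, so the ascending-then-descending shape survives with new peak $n$. The degenerate case where $c_0$ has empty descending run (i.e., $k=n-1$) is handled by the same formulas read cyclically. The step requiring the most care is the commutation argument isolating $s_{n-1}$ at an end of the word; this is not a serious obstacle because $s_{n-2}$ also occurs exactly once, so $s_{n-1}$ lies unambiguously on one side of it and can be freely commuted past every letter on that side. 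The remainder is routine bookkeeping.
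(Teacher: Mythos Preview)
The paper does not supply a proof of this proposition; it merely remarks that the result ``is easy to show'' and records the statement for later use. Your argument is correct and fills in exactly the details the paper elides.

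Both counts are right: standard Coxeter elements of $\S_n$ are in bijection with orientations of the $n-2$ edges of the type $A_{n-1}$ Coxeter graph (record, for each adjacent pair $s_{i-1},s_i$, which appears first), and cycles of the prescribed unimodal shape are in bijection with subsets of $\{2,\ldots,n-1\}$ (those lying on the ascending run), so each side has cardinality $2^{n-2}$. The inductive step is also sound: since $s_{n-1}$ commutes with every simple reflection except $s_{n-2}$, and each appears exactly once, $s_{n-1}$ can be slid to whichever end of the word lies on its side of $s_{n-2}$. The direct computation of $c_0\, s_{n-1}$ and $s_{n-1}\, c_0$ then inserts $n$ immediately after, respectively immediately before, the old peak $n-1$, preserving the ascending--descending shape with new peak $n$; the boundary case of an empty descending run for $c_0$ behaves the same way.
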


Let $c$ be a Coxeter element with cycle notation $(\a_1,\a_2, \ldots, \a_m,\a_{m+1} \dots, \a_n)$, where $1 = \a_1 < \a_2 < \dots < \a_m = n$ and $n = \a_m > \a_{m+1} > \dots > \a_n>\a_1=1$.
Write \begin{align*}
	\lheart_c \coloneqq \{\a_2,\ldots,\a_{m-1}\} \text{  and  }
	\rheart_c \coloneqq \{\a_{m+1},\ldots,\a_n\}.
\end{align*}

\begin{definition}
	\label{def:charmed}
	For $1<i<j<n$, we say that a root $(i,j)$ is \defn{$c$-charmed} if $i \in \lheart_c$ and $j \in \rheart_c$ or if $i \in \rheart_c$ and $j \in \lheart_c$ and \defn{$c$-ordinary} otherwise.  We write $\heart_c$ for the set of $c$-charmed roots.
\end{definition}

\begin{example}
	\label{ex:1}
	Consider the Coxeter element $c=s_2s_1s_3s_6s_5s_4s_8s_7$ in $\S_9$; $s_2$, $s_6$, and $s_8$ are initial in $c$, while $s_1$, $s_4$, and $s_7$ are final.  The element $c$ has cycle notation $(1,3,4,7,9,8,6,5,2)$, so that \[\lheart_c = \{3,4,7\} \text{ and } \rheart_c = \{8,6,5,2\}.\]
\end{example}

As a special case, a simple root $(i,i+1)$ for $1<i<n-1$ is $c$-charmed if and only if it is initial or final in $c$; more precisely, $(i,i+1)$ is initial when $i+1 \in \lheart_c$ and $i \in \rheart_c$, and $(i,i+1)$ is final when $i \in \lheart_c$ and $i+1 \in \rheart_c$.  In figures, we depict $c$-charmed roots with a $\textcolor{red}{\heart}$ and ordinary roots by a circle.

We visualize the cycle notation of $c$ by drawing it as points labeled $w_1,w_2,\ldots,w_n$ %
counter-clockwise around a circle.  We visualize a root $(i,j)$ by connecting the vertices labeled $i$ and $j$ by a line segment.  For any $a,b,c,d \in [n]$, we say that $(a,b)$ \defn{crosses} $(c,d)$ if and only if $(a,b)$ and $(c,d)$ are crossing in their interior (note that $(a,b)$ does not cross itself).  For $1<i<j<n$, it is easy to check that a root $(i,j)$ is $c$-charmed if and only if $(i,j)$ crosses $(i-1,j+1)$.

\begin{example}
	\Cref{fig:c_visualization} illustrates this visualization of the cycle notation $(1,3,4,7,9,8,6,5,2)$ of $c=s_2s_1s_3s_6s_5s_4s_8s_7$.
	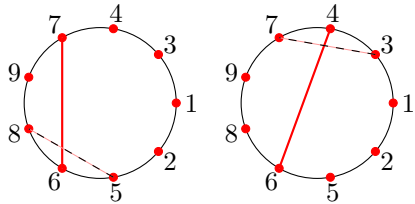
\begin{figure}[htbp]
		\begin{tikzpicture}
			\def\c{{1,3,4,7,9,8,6,5,2}};
			\pgfmathtruncatemacro{\n}{dim(\c)};
			\pgfmathtruncatemacro{\no}{\n-1};
			\draw (0,0) circle (1cm);
			\foreach \i in {0,...,\no}{
					\pgfmathtruncatemacro{\j}{\c[\i]};
					\node (\j) at ({cos(360*\i/\n)},{sin(360*\i/\n)}){};
					\filldraw[red] (\j) circle (1.5pt);
					\node (p\j) at ({1.2*cos(360*\i/\n)},{1.2*sin(360*\i/\n)}){\j};
				};
			\draw[red, thick, fill=red!40!white] (6.center) -- (7.center);
			\draw[black, dashed, fill=red!40!white] (5.center) -- (8.center);
		\end{tikzpicture}
		\begin{tikzpicture}
			\def\c{{1,3,4,7,9,8,6,5,2}};
			\pgfmathtruncatemacro{\n}{dim(\c)};
			\pgfmathtruncatemacro{\no}{\n-1};
			\draw (0,0) circle (1cm);
			\foreach \i in {0,...,\no}{
					\pgfmathtruncatemacro{\j}{\c[\i]};
					\node (\j) at ({cos(360*\i/\n)},{sin(360*\i/\n)}){};
					\filldraw[red] (\j) circle (1.5pt);
					\node (p\j) at ({1.2*cos(360*\i/\n)},{1.2*sin(360*\i/\n)}){\j};
				};
			\draw[red, thick, fill=red!40!white] (4.center) -- (6.center);
			\draw[black, dashed, fill=red!40!white] (3.center) -- (7.center);
		\end{tikzpicture}
		\caption{The visualization of the cycle notation $(1,3,4,7,9,8,6,5,2)$ of $c=s_2s_1s_3s_6s_5s_4s_8s_7$.  {\it Left:} the initial $c$-charmed simple root $(6,7)$ (in red) intersecting the root $(5,8)=(6-1,8+1)$ (dashed).  {\it Right:} the $c$-charmed root $(4,6)$ (in red) intersecting the root $(3,7)=(4-1,6+1)$ (dashed).}
		\label{fig:c_visualization}
	\end{figure}%
\end{example}

For $s=s_k=(k,k+1)$, conjugating $c$ by $s$ interchanges the positions of $k$ and $k+1$ in the cycle notation of $c$.  When $s$ is initial in $c$, then $c'=scs$ is again a Coxeter element and we recall that we write this using the notation $c \xrightarrow{k} c'$.  We obtain the following relationship between $c$-charmed roots and $c'$-charmed roots.  %

\begin{proposition}
	\label{prop:c_charmed}
	Let $c \xrightarrow{k} c'$.  Then for $1<i<j<n$,
	\[ (i,j) \text{ is $c$-charmed if and only if it is }
		\begin{cases}
			\text{$c'$-charmed}  & \text{and } |\{i,j\} \cap \{k,k+1\}| = 0 \mod 2,  \\
			\text{$c'$-ordinary} & \text{ and } |\{i,j\} \cap \{k,k+1\}| = 1 \mod 2.
		\end{cases}
	\]
\end{proposition}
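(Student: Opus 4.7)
The strategy is a direct calculation: I will track how the sets $\lheart_c$ and $\rheart_c$ transform under conjugation by an initial simple reflection, and then apply a parity identity.

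The starting observation is that for $1 < i < j < n$, both $i$ and $j$ lie in $\lheart_c \cup \rheart_c = [n] \setminus \{1, n\}$, and $(i, j)$ is $c$-charmed exactly when one of $i, j$ lies in $\lheart_c$ and the other in $\rheart_c$, i.e.\ exactly when $(i \in \lheart_c) \oplus (j \in \lheart_c)$ holds (where $\oplus$ denotes exclusive-or). Thus it suffices to track $\lheart_c$. Since $s_k = (k, k+1)$, conjugation by $s_k$ simply swaps the labels $k$ and $k+1$ in the cycle notation of $c$. The core of the proof is establishing the following uniform formula, valid for every $i \in \{2, \ldots, n-1\}$:
\[ i \in \lheart_{c'} \iff (i \in \lheart_c) \oplus (i \in \{k, k+1\}). \]

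This formula is verified by a short case analysis on $k$. In the generic case $1 < k < n-1$, the initiality of $s_k$ translates to $k \in \rheart_c$ and $k+1 \in \lheart_c$ (by unpacking the descent condition $c^{-1}(k) > c^{-1}(k+1)$ in terms of the cycle $(\a_1, \ldots, \a_n)$). Swapping the labels $k, k+1$ in the cycle gives $\lheart_{c'} = (\lheart_c \setminus \{k+1\}) \cup \{k\}$, matching the formula. The boundary case $k = 1$ requires a brief check: here the initiality of $s_1$ pins down $\a_2 = 2$ (since $1 = \a_1 < \a_2$ forces $2$, if in $\lheart_c$, into the slot immediately after $1$), and after swapping $1, 2$ and cyclically rotating so the cycle again starts with $1$, the label $2$ lands at the final slot, yielding $\lheart_{c'} = \lheart_c \setminus \{2\}$. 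The case $k = n-1$ is dual: $\a_{m+1} = n-1$, and the swap together with the renormalization gives $\lheart_{c'} = \lheart_c \cup \{n-1\}$. In each case the formula holds.

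Given the formula, the proposition follows from a one-line XOR calculation:
\begin{align*}
(i, j) \text{ is $c'$-charmed}
&\iff (i \in \lheart_{c'}) \oplus (j \in \lheart_{c'}) \\
&\iff \bigl[(i \in \lheart_c) \oplus (i \in \{k, k+1\})\bigr] \oplus \bigl[(j \in \lheart_c) \oplus (j \in \{k, k+1\})\bigr] \\
&\iff \bigl[(i, j) \text{ is $c$-charmed}\bigr] \oplus \bigl(|\{i, j\} \cap \{k, k+1\}| \text{ is odd}\bigr).
\end{align*}
Rearranging this identity yields both cases of the proposition. The only subtlety in the whole argument is the cyclic renormalization in the boundary cases $k = 1$ and $k = n-1$, which is handled cleanly once one observes that $s_1$ initial forces $\a_2 = 2$ and, dually, $s_{n-1}$ initial forces $\a_{m+1} = n-1$.
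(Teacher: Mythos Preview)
Your proof is correct. The paper does not include an explicit proof of this proposition; it is stated as an immediate consequence of the sentence just before it, namely that conjugating $c$ by $s_k$ ``interchanges the positions of $k$ and $k+1$ in the cycle notation of $c$.'' Your argument is exactly a formalization of that observation via the XOR characterization $(i,j)\in\heart_c \iff (i\in\lheart_c)\oplus(j\in\lheart_c)$, so the approaches coincide. If anything, you are more careful than the paper: you explicitly handle the boundary cases $k=1$ and $k=n-1$, where the swap of $k$ and $k+1$ interacts with the endpoints $\a_1=1$ and $\a_m=n$ and forces a cyclic renormalization of the cycle notation before one can read off $\lheart_{c'}$.
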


\subsection{Coxeter-sorting words and Auslander--Reiten quivers}
\label{sec:sort_ar}

Let $c \in \S_n$ be a Coxeter element, and fix $\sc = [r_{1},r_2 \ldots, r_{n-1}]$
a reduced word for $c$. Define the \defn{$c$-sorting word} of the long element $w_\circ$ to be the leftmost reduced word $\w_\circ(\sc)$ for $w_\circ$ in $\sc^\infty$.  Up to commutations, $\w_\circ(\sc)$ does not depend on the choice of reduced word for $c$, and so we shall denote it by $\w_\circ(c)$.
Write the $c$-sorting word for the long element $\w_\circ(c) = [t_1,t_2, \ldots, t_N]$, with each $t_i \in S$ a simple reflection and $N=|T|=\binom{n}{2}$ the number of reflections in $\S_n$. For $a = 1,2, \ldots, N$, define the inversion
\begin{equation}
	\label{eq:root_order}
	\alpha^{(a)} \coloneqq t_{1} t_{2} \cdots t_{a-1}(\alpha_{t_a}),
\end{equation}
where $\alpha_{t_a}$ is the simple root corresponding to the simple reflection $t_a$.  Since $w_\circ$ has every positive root as an inversion, each positive root appears exactly once in the inversion sequence $\inv(\w_\circ(c)) \coloneqq [\alpha^{(1)},\alpha^{(2)}, \ldots, \alpha^{(N)}]$.

\begin{example}
	\label{ex:c_sorting}
	For $\sc=[s_2,s_1,s_3,s_6,s_5,s_4,s_8,s_7]$, we have \begin{align*} \w_\circ(c) & = s_2s_1s_3s_6s_5s_4s_8s_7s_2s_1s_3s_6s_5s_4s_8s_7s_2s_1s_3s_6s_5s_4s_8s_7s_2s_1s_3s_6s_5s_4s_8s_7s_2s_6s_5s_8 \text{ and } \\ \inv(\w_\circ(c)) & =[(23), (13), (24), (67), (57), (27), (89), (69), (14), (34), (17), (59),\\ &\phantom{=[} (29), (19), (68), (58), (37), (47), (39), (28), (18), (38), (56), (26),\\ &\phantom{=[} (49), (79), (48), (16), (36), (46), (25), (15), (78), (35), (45), (12)].
	\end{align*}
\end{example}

It is helpful to arrange the letters in the Coxeter-sorting word $\w_\circ(c)= [t_1,t_2, \ldots, t_N]$ in a quiver $\ARc$---if we replace each letter $t_a$ by the reflection corresponding to the positive root $\alpha^{(a)}$ in the inversion sequence, this is known as the \defn{Auslander--Reiten quiver} $\ARc$~\cite[Sections 3.1.1 and 3.1.2.2]{schiffler2014quiver}. %
A combinatorial description of $\ARc$ is given as follows:
\begin{itemize}
	\item the vertices are the roots $(i,j)$ for $1 \leq i <j \leq n$;
	\item if $(i,i+1)$ is initial in $c$, then $(i,i+1)$ appears as the leftmost vertex in the $i$th row from the top;
	\item if $(i,i+1)$ is final in $c$, then $(i,i+1)$ appears as the rightmost vertex in the $i$th row from the bottom.
	\item for $i<j$ we draw a south-east arrow $(i,j) \longrightarrow (i,c(j))$ if $i<c(j)$; and
	\item for $i<j$ we draw a north-east arrow $(i,j) \longrightarrow (c(i),j)$ if $c(i)<j$.
\end{itemize}
It will be useful to consider the example Auslander--Reiten quiver shown in Figure \ref{fig:tri9} while digesting this description.

Representation-theoretically, the vertices of $\ARc$ are the indecomposable representations of $\Qc$, but these are naturally labelled by positive roots using Gabriel's theorem. We now provide some more details on the structure of $\ARc$.

It follows immediately from the above description that the labels $(i,j)$ appearing on each SW-NE diagonal have $j$ fixed, while those appearing
on each NW-SE diagonal have $i$ fixed; we therefore index the diagonals by the corresponding label. One SW-NE diagonal appears for each value of $j$ except 1; one NW-SE diagonal appears for each value of $i$ except $n$.
It is also immediate that the NW-SE diagonals appear in left-to-right order given by $c$, and similarly for the SW-NE diagonals. For $j$ in $\lheart\cap [2,n-1]$, the righthand end of the SW-NE diagonal labelled by $j$ is $(c^{-1}(j),j)$. The lefthand end of the SW-NE diagonal
labelled by $j$ begins one step to the right from $(c^{-1}(j),j)$, at $(j,c(j))$. For $i$ in $\rheart\cap [2,n-1]$, the righthand end of the NW-SE diagonal labelled by $i$ ends at $(i,c^{-1}(i))$ and the SW-NE-diagonal labelled by $i$ starts one step to the right, at $(c(i),i)$.

Let $s=s_k$ be initial in $c$. It is convenient to write $c=(a_1, \dots a_n)$ where $a_1=k$. We have that $\{a_1,\dots,a_k\}=\{1,\dots,k\}$, $a_{k+1}=k+1$, and $\{a_{k+1},\dots,a_n\}=\{k+1,\dots,n\}$. The $(k+1)$ SW-NE diagonals starting with the diagonal containing containing $(k,k+1)$ are numbered $k=a_1,\dots,a_k,a_{k+1}=k+1$ in that order. The $(n-k+2)$ NW-SE diagonals starting with the diagonal containing $(k,k+1)$ are numbered $k+1=a_{k+1},\dots,a_n,a_1=k$ in that order. In addition to Figure \ref{fig:tri9}, it will also be useful to consult Figure \ref{fig: AR arround k}, which provides a schematic description of a generic AR quiver with $s_k$ initial.

The diagonals labelled by $k$ and $k+1$ form a rectangle with three missing corners. We have already observed that the top and bottom corners of the rectangle are missing; these are the positions where the two diagonals labelled $k$, or the two diagonals labelled $k+1$, should cross. We see that the righthand corner is also missing; it is at the intersection of the SW-NE diagonal labelled $k$ and the NW-SE diagonal labelled $k+1$, so that its label would be $(k+1,k)$, which is not a valid label; however, the labels to its NW and SW are the (valid) labels $(c^{-1}(k+1),k)$ and $(k+1,c^{-1}(k))$, so all the rest of the roots along the edges of the rectangle are indeed present.

We will need the following lemma, which characterizes the labels whichappear outside this rectangle.

\begin{lemma}
	\label{lem:outside}
	Let $s=s_k$ be initial in $c$.
	In $\ARc$, all the roots above the SW-NE diagonal labelled $k+1$ or below the SW-NE diagonal labelled by $k$ have both entries less than $k$.
	Similarly, all the roots above the NW-SE diagonal labelled by $k+1$ or below the NW-SE diagonal labelled $k$ have both entries greater than $k+1$.
\end{lemma}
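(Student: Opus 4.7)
The plan is to introduce coordinates on $\ARc$ and trace chains of arrows from the leftmost vertex $(k,k+1)$ to compute the positions of all SW-NE and NW-SE diagonals directly. Place $(k,k+1)$ at the origin; NE arrows contribute $(1,1)$ and SE arrows contribute $(1,-1)$ to subsequent positions. Under this convention, $x - y$ is constant on each SW-NE diagonal and $x + y$ is constant on each NW-SE diagonal, so the first claim reduces to showing that SW-NE diagonal $j$ satisfies $x - y \in [0, 2(n-k)]$ exactly when $j \geq k$.

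The key computation is to trace the chain of SE arrows from $(k,k+1)$ along NW-SE diagonal $k$. Because SE acts by $(i,j)\mapsto(i,c(j))$, this chain visits $(k, c^i(k+1))$ for $i = 0, 1, \ldots, n-k-1$ and halts because $c^{n-k}(k+1) = k$ is not a valid second coordinate. By the cycle structure $(k, a_2, \ldots, a_k, k+1, a_{k+2}, \ldots, a_n)$, the values $c^i(k+1) = a_{k+1+i}$ traverse exactly $\{k+1, k+2, \ldots, n\}$, so the SW-NE diagonals with these labels have $x - y$ values $0, 2, 4, \ldots, 2(n-k-1)$. One additional NE arrow followed by $n-k$ SE arrows reaches $(a_2, k)$ at position $(n-k+1, -(n-k-1))$, showing SW-NE $k$ has $x - y = 2(n-k)$. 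Thus every SW-NE diagonal labeled $\geq k$ lies in the strip $0 \leq x-y \leq 2(n-k)$.

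For the remaining labels in $\{2, \ldots, k-1\}$, continue the arrow analysis past SW-NE $k$. The NE chain $(a_2, k), (a_3, k), \ldots, (a_k, k)$ stays on SW-NE $k$, and SE arrows from these vertices lead successively to SW-NE diagonals labeled $c(k) = a_2, c^2(k) = a_3, \ldots$, at $x - y = 2(n-k+1), 2(n-k+2), \ldots$. The chain breaks at the phantom label $1 = a_{i_0}$, since no SW-NE diagonal is labeled $1$. The diagonals visited before the phantom (labels $a_2, \ldots, a_{i_0-1}$) have $x - y > 2(n-k)$, placing them below SW-NE $k$; the diagonals $a_{i_0+1}, \ldots, a_k$ after the phantom must be placed in the opposite region with $x - y < 0$, as they connect via NE arrows to other initial simple roots $(j-1,j)$ sitting at column $0$ in $\ARc$. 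In both cases the label is strictly less than $k$, so any root $(i,j)$ on such a diagonal satisfies $i < j < k$, giving both entries $< k$.

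The second claim follows by the symmetric argument using NE arrows and the quantity $x + y$: the NE chain from $(k,k+1)$ along SW-NE $k+1$ visits NW-SE labels $\{1, \ldots, k\}$ at $x + y \in \{0, 2, \ldots, 2(k-1)\}$, and the combination $(k-1)$ NE + $1$ SE + $1$ NE reaches NW-SE $k+1$ at $x+y = 2k$, so NW-SE diagonals labeled $\leq k+1$ all lie in $[0, 2k]$. The remaining NW-SE diagonals (labels in $\{k+2, \ldots, n-1\}$) lie outside this strip---above NW-SE $k+1$ or below NW-SE $k$---and the phantom in this direction is at label $n$, where $a_{j_0} = n$ for some $j_0 \in \{k+1,\ldots,n\}$. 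Roots on such diagonals have first entry $> k+1$, hence both entries $> k+1$. The main obstacle throughout is the careful bookkeeping at the phantom positions, where the chain of arrows breaks and the remaining diagonals must be located by anchoring to other initial simple roots in $\ARc$ and verifying that they indeed fall on the claimed side of the bounding diagonal.
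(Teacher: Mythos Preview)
Your coordinate-and-arrow approach works in spirit, but it is considerably more laborious than the paper's argument. The paper simply invokes the fact, recorded a few lines before the lemma, that each family of diagonals appears in left-to-right order given by the cycle $c$. Writing $c=(k,a_2,\ldots,a_k,k+1,a_{k+2},\ldots,a_n)$ with $\{a_2,\ldots,a_k\}=\{1,\ldots,k-1\}$ and $\{a_{k+2},\ldots,a_n\}=\{k+2,\ldots,n\}$, it is then immediate that the SW-NE diagonals lying between those labelled $k+1$ and $k$ are exactly those with labels $\geq k$, so any SW-NE diagonal outside that strip carries a label $j<k$; since $i<j$, both entries are $<k$. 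Your computation is essentially an explicit re-derivation of this ``diagonals in $c$-order'' statement for the labels you need.

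There is, however, a genuine gap in your handling of the ``after the phantom'' labels $a_{i_0+1},\ldots,a_k$. You assert they sit at $x-y<0$ because they ``connect via NE arrows to other initial simple roots $(j-1,j)$ sitting at column $0$''. But the SW-NE diagonal labelled $m$ contains the simple root $(m-1,m)$ only when $s_{m-1}$ happens to be initial in $c$, and nothing forces this for each $m\in\{a_{i_0+1},\ldots,a_k\}$; you also never prove that the other initial simple roots lie at $x=0$, nor why that would yield $x-y<0$ rather than $x-y>2(n-k)$. A clean repair stays entirely within your framework: from $(k,k+1)$ at the origin take $i_0-1$ NE steps to $(1,k+1)$ at $(i_0-1,i_0-1)$, then step NW (reverse SE) to $(1,a_k),(1,a_{k-1}),\ldots,(1,a_{i_0+1})$, landing at $x-y=-2,-4,\ldots,-2(k-i_0)$, all negative. (Each NW step is valid because $a_m\neq 1$ for $m>i_0$.) This places the missing SW-NE diagonals above SW-NE $k+1$ as required, and the symmetric trace along the SW-NE diagonal labelled $n$ handles the NW-SE half of the second claim. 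Once this is patched, your argument is complete, though the paper's one-line appeal to the $c$-ordering of diagonals gets there with far less bookkeeping.
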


\begin{proof}
	Recall that each set of diagonals appears in the order given by $c$. This implies that the NW-SE diagonals with labels greater than $k$ are between the one labelled $k+1$ and the one labelled $k$. The SW-NE diagonals less than $k+1$ are between the one labelled $k+1$ and the one labelled $k$.
	It follows that the roots in the region above the SW-NE diagonal labelled by $k+1$ lie on diagonals which are both labelled by numbers less than $k$. A similar argument applies to the three other regions.
\end{proof}

Let $c'=s_kcs_k$, so $c \xrightarrow{k} c'$. In this case, we can relate the Auslander--Reiten quivers $\AR(c)$ and $\AR(c')$.

\begin{proposition}
	\label{prop:ARc}
	Let $c \xrightarrow{k} c'$. Then $\AR(c')$ is obtained from $\ARc$ by replacing every label $k$ by $k+1$ and vice versa, while also removing the root labelled $(k,k+1)$ and replacing it where the SW-NE diagonal labelled $k+1$ and the NW-SE diagonal labelled $k$ now cross.
\end{proposition}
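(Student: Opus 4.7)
The plan is to prove this by directly unwinding the combinatorial description of the AR quiver given in the paragraphs preceding \Cref{lem:outside}. First, since $c' = s_k c s_k$ and $s_k = (k,k+1)$, the cycle notation of $c'$ is obtained from that of $c$ by simply interchanging the entries $k$ and $k+1$: writing $c = (k, a_2, \ldots, a_k, k+1, a_{k+2}, \ldots, a_n)$ as in the excerpt, a direct computation gives $c' = (k+1, a_2, \ldots, a_k, k, a_{k+2}, \ldots, a_n)$. In particular, the relative order of all entries other than $k, k+1$ is preserved; $s_k$, initial in $c$, becomes final in $c'$; and $c'(i) = c(i)$ outside $\{k, k+1\}$ while $c'(k) = c(k+1)$ and $c'(k+1) = c(k)$.

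Next, because both families of diagonals appear in left-to-right order given by the cycle of $c$, and that order changes only by swapping $k \leftrightarrow k+1$, the physical positions of all diagonals in $\AR(c')$ coincide with those in $\ARc$ while the labels $k$ and $k+1$ are exchanged on both the NW-SE and SW-NE families. A vertex $(i,j)$ sits at the intersection of its NW-SE diagonal (labeled $i$) and its SW-NE diagonal (labeled $j$), so this diagonal relabeling induces exactly the vertex-label swap $k \leftrightarrow k+1$ claimed in the proposition; the arrow rules $(i,j) \to (i, c'(j))$ and $(i,j) \to (c'(i), j)$ remain consistent with those of $\ARc$ precisely because of how $c'$ relates to $c$ on $\{k,k+1\}$.

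It remains to handle the exceptional movement of $(k,k+1)$. In $\ARc$, the simple root $(k,k+1)$ occupies the lefthand corner of the rectangle formed by the four diagonals labeled $k$ or $k+1$ -- namely the intersection of the SW-NE diagonal labeled $k+1$ with the NW-SE diagonal labeled $k$. After applying the $k \leftrightarrow k+1$ label swap, this same physical corner would carry the invalid label $(k+1, k)$ and must be removed; simultaneously, the previously-missing righthand corner, which in $\ARc$ was at the intersection of the SW-NE diagonal labeled $k$ with the NW-SE diagonal labeled $k+1$, becomes in $\AR(c')$ the intersection of the SW-NE diagonal labeled $k+1$ with the NW-SE diagonal labeled $k$, now carrying the valid label $(k,k+1)$. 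This is precisely the position specified in the statement, and it is also consistent with the global rule that $(k,k+1)$, being final in $c'$, appears as the rightmost vertex of its row from the bottom. The main obstacle is this corner bookkeeping, and \Cref{lem:outside} is the essential input: it guarantees that every vertex label outside the rectangle avoids $\{k,k+1\}$, so the label swap acts trivially there and no vertices or arrows besides $(k,k+1)$ itself are disturbed.
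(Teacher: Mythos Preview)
Your argument is essentially the same as the paper's: both rest on the observation that conjugating $c$ by $s_k$ swaps $k\leftrightarrow k+1$ in the cycle notation, so relabeling each vertex $(i,j)$ by $(s_k(i),s_k(j))$ carries $\ARc$ to $\AR(c')$ away from the root $(k,k+1)$, which must be moved by hand to the formerly missing right-hand corner.

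One correction worth making: your final paragraph overstates the role of \Cref{lem:outside}. The paper does not invoke it here, and it is not the ``essential input.'' What actually makes the argument work is the point you state (somewhat briefly) in your second paragraph: the edge-defining inequalities $i<c(j)$ and $c(i)<j$ are preserved under $s_k$ except possibly when $\{i,c(j)\}$ or $\{c(i),j\}$ equals $\{k,k+1\}$, and one checks that those exceptional cases only occur for arrows incident to $(k,k+1)$ itself. \Cref{lem:outside} tells you the swap acts trivially on labels \emph{outside} the rectangle, but the vertices on the rectangle's boundary diagonals \emph{do} get relabeled---so your phrase ``no vertices or arrows besides $(k,k+1)$ itself are disturbed'' is imprecise. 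The substance is that their \emph{positions and arrows} are preserved under the relabeling, which follows from the inequality check, not from \Cref{lem:outside}.
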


\begin{proof}
	Another way to describe the transformation of the roots (other than $(k,k+1)$) is that we replace the label $(i,j)$ by $(s(i),s(j))$. Since $c'=scs$, if $c(j)=\tilde j$, then $c'(s(j))=s(\tilde j)$. All the edges of the quiver except those involving $(k,k+1)$ will remain, because the action of $s$ will not change whether or not the inequalities from the description of the Auslander--Reiten quiver hold. In $\ARcp$, the label $(k,k+1)$ must lie at the intersection of the SW-NE diagonal labeled by $k+1$ and the NW-SE diagonal labelled by $k$; as we have already observed, this position was empty in the Auslander--Reiten quiver of $c$.
\end{proof}

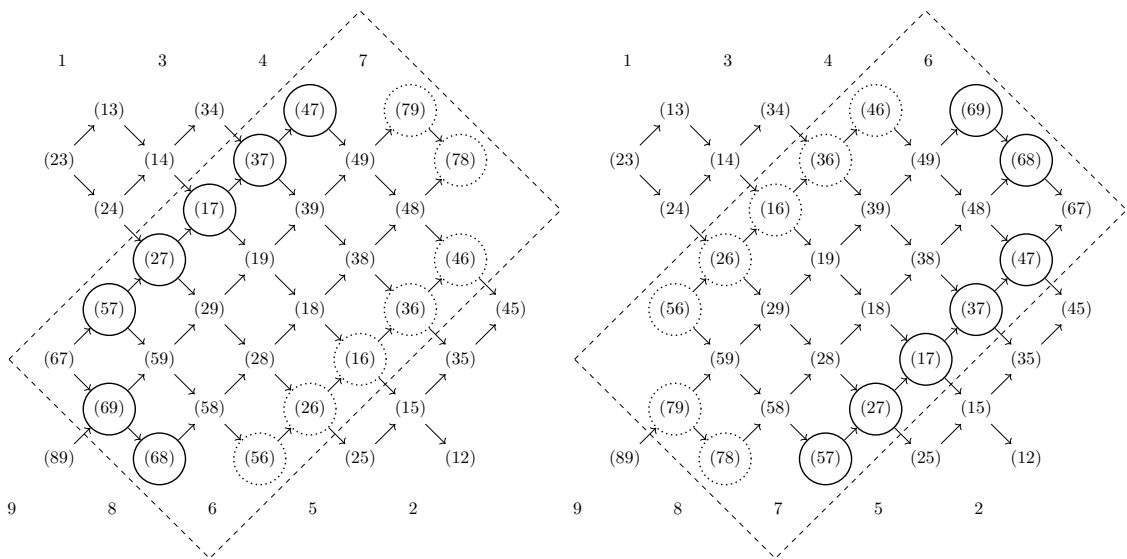
\begin{figure}[htbp]
	\[
		\scalebox{0.66}{
			\begin{tikzpicture}
				\node (1) at (0,8) {{  $1$}};
				\node (3) at (2,8) {{  $3$}};
				\node (4) at (4,8) {{  $4$}};
				\node (7) at (6,8) {{  $7$}};
				\node (9) at (-1,-1) {{  $9$}};
				\node (8) at (1,-1) {{  $8$}};
				\node (6) at (3,-1) {{  $6$}};
				\node (5) at (5,-1) {{  $5$}};
				\node (2) at (7,-1) {{  $2$}};

				\draw[dashed] (-1,2) -- (6,9) -- (10,5) -- (3,-2) -- (-1,2);

				\node (67) at (0,2) {(67)};
				\node[circle,draw,thick] (57) at (1,3) {(57)};
				\node[circle,draw,thick] (27) at (2,4) {(27)};
				\node[circle,draw,thick] (17) at (3,5) {(17)};
				\node[circle,draw,thick] (37) at (4,6) {(37)};
				\node[circle,draw,thick] (47) at (5,7) {(47)};
				\node[circle,draw,thick] (69) at (1,1) {(69)};
				\node (59) at (2,2) {(59)};
				\node (29) at (3,3) {(29)};
				\node (19) at (4,4) {(19)};
				\node (39) at (5,5) {(39)};
				\node (49) at (6,6) {(49)};
				\node[circle,draw,thick] (68) at (2,0) {(68)};
				\node (58) at (3,1) {(58)};
				\node (28) at (4,2) {(28)};
				\node (18) at (5,3) {(18)};
				\node (38) at (6,4) {(38)};
				\node (48) at (7,5) {(48)};
				\node[circle,draw,thick,dotted] (78) at (8,6) {(78)};
				\node (89) at (0,0) {(89)};
				\node[circle,draw,thick,dotted] (79) at (7,7) {(79)};
				\node[circle,draw,thick,dotted] (56) at (4,0) {(56)};
				\node (25) at (6,0) {(25)};
				\node (12) at (8,0) {(12)};
				\node (13) at (1,7) {(13)};
				\node (34) at (3,7) {(34)};
				\node[circle,draw,thick,dotted] (26) at (5,1) {(26)};
				\node (15) at (7,1) {(15)};
				\node (23) at (0,6) {(23)};
				\node (14) at (2,6) {(14)};
				\node[circle,draw,thick,dotted] (16) at (6,2) {(16)};
				\node (35) at (8,2) {(35)};
				\node (24) at (1,5) {(24)};
				\node[circle,draw,thick,dotted] (36) at (7,3) {(36)};
				\node (45) at (9,3) {(45)};
				\node[circle,draw,thick,dotted] (46) at (8,4) {(46)};

				\draw[->] (13)--(14);\draw[->] (14)--(34);\draw[->] (14)--(17);\draw[->] (15)--(35);\draw[->] (15)--(12);\draw[->] (16)--(36);\draw[->] (16)--(15);\draw[->] (17)--(37);\draw[->] (17)--(19);\draw[->] (18)--(38);\draw[->] (18)--(16);\draw[->] (19)--(39);\draw[->] (19)--(18);\draw[->] (23)--(13);\draw[->] (23)--(24);\draw[->] (24)--(14);\draw[->] (24)--(27);\draw[->] (25)--(15);\draw[->] (26)--(16);\draw[->] (26)--(25);\draw[->] (27)--(17);\draw[->] (27)--(29);\draw[->] (28)--(18);\draw[->] (28)--(26);\draw[->] (29)--(19);\draw[->] (29)--(28);\draw[->] (34)--(37);\draw[->] (35)--(45);\draw[->] (36)--(46);\draw[->] (36)--(35);\draw[->] (37)--(47);\draw[->] (37)--(39);\draw[->] (38)--(48);\draw[->] (38)--(36);\draw[->] (39)--(49);\draw[->] (39)--(38);\draw[->] (46)--(45);\draw[->] (47)--(49);\draw[->] (48)--(78);\draw[->] (48)--(46);\draw[->] (49)--(79);\draw[->] (49)--(48);\draw[->] (56)--(26);\draw[->] (57)--(27);\draw[->] (57)--(59);\draw[->] (58)--(28);\draw[->] (58)--(56);\draw[->] (59)--(29);\draw[->] (59)--(58);\draw[->] (67)--(57);\draw[->] (67)--(69);\draw[->] (68)--(58);\draw[->] (69)--(59);\draw[->] (69)--(68);\draw[->] (79)--(78);\draw[->] (89)--(69);
			\end{tikzpicture}%
			\begin{tikzpicture}
				\node (1) at (0,8) {{  $1$}};
				\node (3) at (2,8) {{  $3$}};
				\node (4) at (4,8) {{  $4$}};
				\node (7) at (6,8) {{  $6$}};
				\node (9) at (-1,-1) {{  $9$}};
				\node (8) at (1,-1) {{  $8$}};
				\node (6) at (3,-1) {{  $7$}};
				\node (5) at (5,-1) {{  $5$}};
				\node (2) at (7,-1) {{  $2$}};
				\draw[dashed] (-1,2) -- (6,9) -- (10,5) -- (3,-2) -- (-1,2);
				\node (67) at (9,5) {(67)};
				\node[circle,draw,thick,dotted] (57) at (1,3) {(56)};
				\node[circle,draw,thick,dotted] (27) at (2,4) {(26)};
				\node[circle,draw,thick,dotted] (17) at (3,5) {(16)};
				\node[circle,draw,thick,dotted] (37) at (4,6) {(36)};
				\node[circle,draw,thick,dotted] (47) at (5,7) {(46)};

				\node[circle,draw,thick,dotted] (69) at (1,1) {(79)};
				\node (59) at (2,2) {(59)};
				\node (29) at (3,3) {(29)};
				\node (19) at (4,4) {(19)};
				\node (39) at (5,5) {(39)};

				\node (49) at (6,6) {(49)};

				\node[circle,draw,thick,dotted] (68) at (2,0) {(78)};
				\node (58) at (3,1) {(58)};
				\node (28) at (4,2) {(28)};
				\node (18) at (5,3) {(18)};
				\node (38) at (6,4) {(38)};
				\node (48) at (7,5) {(48)};
				\node[circle,draw,thick] (78) at (8,6) {(68)};
				\node (89) at (0,0) {(89)};
				\node[circle,draw,thick] (79) at (7,7) {(69)};

				\node[circle,draw,thick] (56) at (4,0) {(57)};
				\node (25) at (6,0) {(25)};
				\node (12) at (8,0) {(12)};
				\node (13) at (1,7) {(13)};
				\node (34) at (3,7) {(34)};

				\node[circle,draw,thick] (26) at (5,1) {(27)};
				\node (15) at (7,1) {(15)};
				\node (23) at (0,6) {(23)};
				\node (14) at (2,6) {(14)};

				\node[circle,draw,thick] (16) at (6,2) {(17)};
				\node (35) at (8,2) {(35)};
				\node (24) at (1,5) {(24)};

				\node[circle,draw,thick] (36) at (7,3) {(37)};
				\node (45) at (9,3) {(45)};

				\node[circle,draw,thick] (46) at (8,4) {(47)};

				\draw[->] (13)--(14);\draw[->] (14)--(34);\draw[->] (14)--(17);\draw[->] (15)--(35);\draw[->] (15)--(12);\draw[->] (16)--(36);\draw[->] (16)--(15);\draw[->] (17)--(37);\draw[->] (17)--(19);\draw[->] (18)--(38);\draw[->] (18)--(16);\draw[->] (19)--(39);\draw[->] (19)--(18);\draw[->] (23)--(13);\draw[->] (23)--(24);\draw[->] (24)--(14);\draw[->] (24)--(27);\draw[->] (25)--(15);\draw[->] (26)--(16);\draw[->] (26)--(25);\draw[->] (27)--(17);\draw[->] (27)--(29);\draw[->] (28)--(18);\draw[->] (28)--(26);\draw[->] (29)--(19);\draw[->] (29)--(28);\draw[->] (34)--(37);\draw[->] (35)--(45);\draw[->] (36)--(46);\draw[->] (36)--(35);\draw[->] (37)--(47);\draw[->] (37)--(39);\draw[->] (38)--(48);\draw[->] (38)--(36);\draw[->] (39)--(49);\draw[->] (39)--(38);\draw[->] (46)--(45);\draw[->] (47)--(49);\draw[->] (48)--(78);\draw[->] (48)--(46);\draw[->] (49)--(79);\draw[->] (49)--(48);\draw[->] (56)--(26);\draw[->] (57)--(27);\draw[->] (57)--(59);\draw[->] (58)--(28);\draw[->] (58)--(56);\draw[->] (59)--(29);\draw[->] (59)--(58);\draw[->] (46)--(67);\draw[->] (78)--(67);\draw[->] (68)--(58);\draw[->] (69)--(59);\draw[->] (69)--(68);\draw[->] (79)--(78);\draw[->] (89)--(69);
			\end{tikzpicture}}
	\]

	\caption{The Coxeter element $c=s_2s_1s_3s_6s_5s_4s_8s_7$ has cycle notation $(1,3,4,7,9,8,6,5,2)$.  The simple reflection $s=s_6$ is initial in $c$, and we set $c'=scs$, with cycle notation $(1,3,4,6,9,8,7,5,2)$.  The quivers $\AR(c)$ ({\it left}) and $\AR(c')$ ({\it right}) are illustrated above.  The roots in $\T_6$ defined in~\Cref{eq:tk} are marked by solid circles in $\AR(c)$ and $\AR(c')$, while the roots in $\sT_6$ are marked by dotted circles.}%
	\label{fig:tri9}
\end{figure}

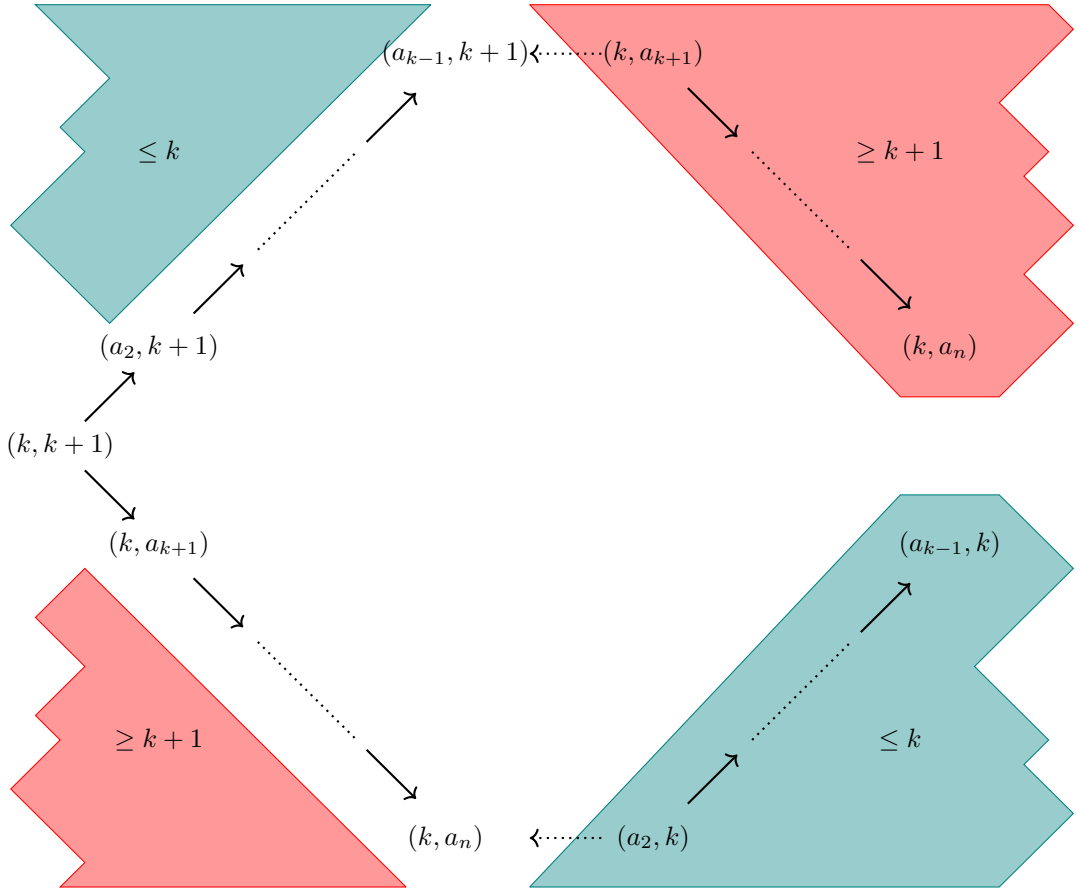
\begin{figure}[htbp]
	\begin{center}
		\begin{tikzpicture}[scale=1.3]
			\draw[teal, fill=teal!40!white] (3.75,4.5) -- (.5,1.25) -- (-.5,2.25) -- (.25,3) -- (0,3.25) -- (.5,3.75) -- (-.25,4.5) -- (3.75,4.5);
			\node at (1,3) {$\leq k$};%
			\draw[red, fill=red!40!white] (3.5,-4.5) -- (0.25,-1.25) -- (-.25,-1.75) -- (.25,-2.25) -- (-.25,-2.75) -- (0,-3) -- (-.5,-3.5) -- (.25,-4.25) -- (0,-4.5) -- (3.5,-4.5);
			\node at (1,-3) {$\geq k+1$};%

			\draw[red, fill=red!40!white] (4.75,4.5) -- (8.5,.5) -- (9.5,.5) -- (10.25,1.25) -- (9.75,1.75) -- (10.25,2.25) -- (9.75,2.75) -- (10,3) -- (9.5,3.5) -- (10.25,4.25) -- (10,4.5) -- (4.75,4.5);
			\node at (8.5,3) {$\geq k+1$};%
			\draw[teal, fill=teal!40!white] (4.75,-4.5) -- (8.5,-.5) -- (9.5,-.5) -- (10.25,-1.25) -- (9.25,-2.25) -- (10,-3) -- (9.75,-3.25) -- (10.25,-3.75) -- (9.5,-4.5) -- (4.75,-4.5);
			\node at (8.5,-3) {$\leq k$};%

			\node at (4.5,0) {};%
			\node at (0,0) {$(k,k+1)$};
			\draw[->, thick] (.25,.25) -- (.75,.75);
			\node at (1,1) {$(a_2,k+1)$};
			\draw[->, thick] (1.35,1.35) -- (1.85,1.85);

			\draw[dotted, thick] (2,2) -- (3,3);
			\draw[->, thick] (3.1,3.1) -- (3.6,3.6);
			\node at (4,4) {$(a_{k-1},k+1)$};
			\draw[<-, thick, dotted] (4.75,4) -- (5.5,4);
			\node at (6,4) {$(k,a_{k+1})$};
			\draw[->, thick] (6.35,3.65) -- (6.85,3.15);
			\draw[dotted, thick] (7,3) -- (8,2);
			\draw[->, thick] (8.1,1.9) -- (8.6,1.4);
			\node at (8.9,1) {$(k,a_{n})$};

			\draw[->, thick] (0.25,-.25) -- (0.75,-0.75);
			\node at (1,-1) {$(k,a_{k+1})$};
			\draw[->, thick] (1.35,-1.35) -- (1.85,-1.85);
			\draw[dotted, thick] (2,-2) -- (3,-3);
			\draw[->, thick] (3.1,-3.1) -- (3.6,-3.6);
			\node at (3.9,-4) {$(k,a_n)$};
			\draw[<-, thick, dotted] (4.75,-4) -- (5.5,-4);
			\node at (6,-4) {$(a_{2},k)$};
			\draw[->, thick] (6.35,-3.65) -- (6.85,-3.15);
			\draw[dotted, thick] (7,-3) -- (8,-2);
			\draw[->, thick] (8.1,-1.9) -- (8.6,-1.4);
			\node at (9,-1) {$(a_{k-1},k)$};
		\end{tikzpicture}
	\end{center}
	\caption{This provides a generic description of $\ARc$ where $s_k$ is initial in $c$. Here $c$ has cycle notation $(a_1,\ldots,a_k,a_{k+1},\ldots,a_{n})$ with $a_1=k$, $a_{k+1}=k+1$, $\{a_1,\ldots,a_k\} = \{1,\ldots,k\}$, and $\{a_{k+1},\ldots,a_{n}\} = \{k+1,\ldots,n\}$. The regions shaded in green consist of roots with
	both labels $\leq k$, while the regions shaded in red consist of roots with both labels $\geq k+1$, as guaranteed by Lemma \ref{lem:outside}.}
	\label{fig: AR arround k}
\end{figure}

\section{Noncrossing partitions and the Kreweras complement}
\label{sec:comb}

In this section, we define \emph{noncrossing partitions} and the \emph{Kreweras complement}.

\subsection{Noncrossing partitions}

\begin{definition}
	\label{def:noncrossing}
	Let $(W,S)$ be a Coxeter system and $c$ be a Coxeter element. The \defn{$c$-noncrossing partition lattice} is the absolute order interval \[\NC(W,c) \coloneqq [1,c]_T = \{ \nc \in W \mid 1 \leq_T \nc \leq_T c \}.\]  The \defn{support} of a noncrossing partition $\nc \in \NC(W,c)$ is the set $\Supp(\nc)$ of simple reflections required to write a reduced word in simple reflections for $\nc$.
\end{definition}

We now recall the usual graphical realization of noncrossing partitions in $\S_n$.
When $c$ is a Coxeter element of $\S_n$ with cycle notation $(a_1,a_2, \ldots, a_n)$, we depict a $c$-noncrossing partition $\nc \in \NC(\S_n,c)$ using the visualization of the cycle notation $(a_1,a_2, \ldots, a_n)$ by drawing the convex hull of the sets of vertices which belong to the same cycle of $\nc$.  Examples are illustrated on the left and right of~\Cref{fig:Kreweras}.

As the name suggests, the noncrossing partitions $\NC(\S_n,c)$ are exactly the set partitions of $[n]$ with the property that the convex hulls of their blocks are disjoint using the cyclic ordering $(a_1,a_2, \ldots, a_n)$; the absolute order interval is obtained by partially ordering these set partitions by reverse refinement.

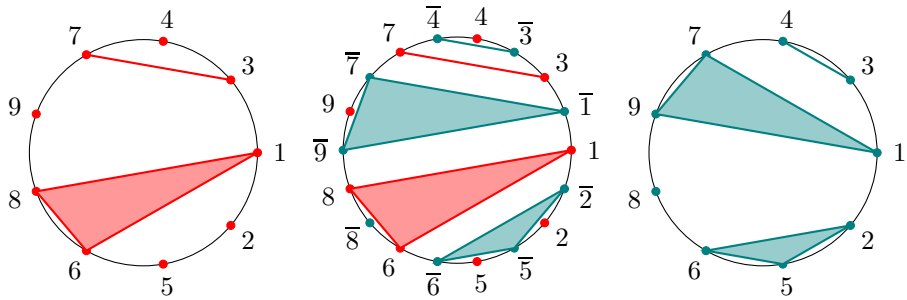
\begin{figure}[htbp]
	\begin{tikzpicture}[scale=1.5]
		\def\c{{1,3,4,7,9,8,6,5,2}};
		\pgfmathtruncatemacro{\n}{dim(\c)};
		\pgfmathtruncatemacro{\no}{\n-1};
		\draw (0,0) circle (1cm);
		\foreach \i in {0,...,\no}{
				\pgfmathtruncatemacro{\j}{\c[\i]};
				\node (\j) at ({cos(360*\i/\n)},{sin(360*\i/\n)}){};
				\filldraw[red] (\j) circle (1pt);
				\node (p\j) at ({1.2*cos(360*\i/\n)},{1.2*sin(360*\i/\n)}){\j};
			};
		\draw[red, thick, fill=red!40!white] (1.center) -- (6.center) -- (8.center) -- (1.center);
		\draw[red, thick, fill=red!40!white] (3.center) -- (7.center);
	\end{tikzpicture}
	\begin{tikzpicture}[scale=1.5]
		\def\c{{1,3,4,7,9,8,6,5,2}};
		\pgfmathtruncatemacro{\n}{dim(\c)};
		\pgfmathtruncatemacro{\no}{\n-1};
		\draw (0,0) circle (1cm);
		\foreach \i in {0,...,\no}{
				\pgfmathtruncatemacro{\j}{\c[\i]};
				\node (\j) at ({cos(360*(2*\i)/(2*\n))},{sin(360*(2*\i)/(2*\n))}){};
				\filldraw[red] (\j) circle (1pt);
				\node (p\j) at ({1.2*cos(360*(2*\i)/(2*\n))},{1.2*sin(360*(2*\i)/(2*\n))}){\j};
				\node (a\j) at ({cos(360*(2*\i+1)/(2*\n))},{sin(360*(2*\i+1)/(2*\n))}){};
				\filldraw[teal] (a\j) circle (1pt);
				\node (ap\j) at ({1.2*cos(360*(2*\i+1)/(2*\n))},{1.2*sin(360*(2*\i+1)/(2*\n))}){$\overline{\j}$};
			};
		\draw[red, thick, fill=red!40!white] (1.center) -- (6.center) -- (8.center) -- (1.center);
		\draw[red, thick, fill=red!40!white] (3.center) -- (7.center);
		\draw[teal, thick, fill=teal!40!white] (a1.center) -- (a7.center) -- (a9.center) -- (a1.center);
		\draw[teal, thick, fill=teal!40!white] (a2.center) -- (a5.center) -- (a6.center) -- (a2.center);
		\draw[teal, thick, fill=teal!40!white] (a3.center) -- (a4.center);
	\end{tikzpicture}
	\begin{tikzpicture}[scale=1.5]
		\def\c{{1,3,4,7,9,8,6,5,2}};
		\pgfmathtruncatemacro{\n}{dim(\c)};
		\pgfmathtruncatemacro{\no}{\n-1};
		\draw (0,0) circle (1cm);
		\foreach \i in {0,...,\no}{
				\pgfmathtruncatemacro{\j}{\c[\i]};
				\node (\j) at ({cos(360*\i/\n)},{sin(360*\i/\n)}){};
				\filldraw[teal] (\j) circle (1pt);
				\node (p\j) at ({1.2*cos(360*\i/\n)},{1.2*sin(360*\i/\n)}){\j};
			};
		\draw[teal, thick, fill=teal!40!white] (1.center) -- (7.center) -- (9.center) -- (1.center);
		\draw[teal, thick, fill=teal!40!white] (2.center) -- (5.center) -- (6.center) -- (2.center);
		\draw[teal, thick, fill=teal!40!white] (3.center) -- (4.center);
	\end{tikzpicture}
	\caption[Set partitions]{Fix the Coxeter element $c=(134798652)$.  {\it Left: } the $c$-noncrossing partition $\nc=\{\{1,6,8\},\{3,7\}\}$. {\it Right: } the $c$-noncrossing partition $\Krew_c(\nc)=\{\{1,7,9\},\{2,5,6\},\{3,4\}\}.$ {\it Middle: } the combinatorial construction of the Kreweras complement.}
	\label{fig:Kreweras}
\end{figure}

\subsection{The Kreweras complement}
\label{sec:Kreweras}

\begin{definition}
	\label{def:krew}
	As in~\Cref{eq:krew}, we define the $c$-Kreweras complement on $\NC(\S_n,c)$ by \[\Krew_c(\nc) = \nc^{-1} c.\]
\end{definition}

We compute the $c$-Kreweras complement using the graphical realization of noncrossing partitions as follows:
between each point $a_i$ and $a_{i+1}$, add a new ``barred'' point labeled $\overline{a}_i$.  Draw the graphical depiction of $\nc \in \NC(\S_n,c)$ using only unbarred points, and define $\overline{\nc}'$ to be the coarsest noncrossing partition using only barred points such that $\overline{\nc}'\cup \nc$ is noncrossing.  Finally, define $\Kr_c(\nc) = \nc'$ to be the combinatorial noncrossing partition obtained from $\overline{\nc}'$ by dropping the bars and rotating clockwise by $\pi/n$ radians.  This procedure is illustrated in~\Cref{fig:Kreweras}.

\begin{proposition}
	The square of the $c$-Kreweras complement $\Kr_c$ acts as clockwise rotation by $2\pi/n$ radians.  Furthermore, $\Krew_c$ is a poset anti-automorphism of $\NC(\S_n,c)$.
\end{proposition}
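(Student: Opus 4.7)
The plan is to establish the two assertions separately, both by elementary manipulations of products and absolute lengths in $\S_n$.

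For the rotational identity, I would simply compute
\[ \Krew_c^2(\nc) = \Krew_c(\nc^{-1} c) = (\nc^{-1}c)^{-1} c = c^{-1}\, \nc\, c. \]
Since $c$ is the $n$-cycle $(a_1,a_2,\ldots,a_n)$, conjugation by $c^{-1}$ acts on cycle notation by the relabeling $a_i\mapsto a_{i-1}$ (indices mod $n$). In the visualization of $\NC(\S_n,c)$ in which the labels $a_1,\ldots,a_n$ are placed counter-clockwise at equally spaced points on the circle, this relabeling is precisely a clockwise rotation of the labeled picture by $2\pi/n$ radians, and the same is true of the convex hulls of the blocks of $\nc$. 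Hence $\Krew_c^2$ acts as the asserted rotation.

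For the anti-automorphism assertion, I would argue in three steps. First, $\Krew_c$ preserves the interval: if $\nc \leq_T c$ then by definition of the absolute order there is a reduced factorization witnessing $\ell_T(\nc)+\ell_T(\nc^{-1}c) = \ell_T(c)$, which directly gives $\nc^{-1}c \leq_T c$. Second, $\Krew_c$ is a bijection of $\NC(\S_n,c)$, because the map $\psi \mapsto c\psi^{-1}$ is a two-sided inverse (the same argument shows it preserves the interval); equivalently, one can invoke the first assertion and the fact that $c^n=1$ to conclude that $\Krew_c$ has finite order $2n$.

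Third, $\Krew_c$ reverses the order. Suppose $\nc_1 \leq_T \nc_2 \leq_T c$. Concatenating the two reduced $T$-factorizations witnessing these inequalities produces a reduced factorization
\[ c = \nc_1 \cdot (\nc_1^{-1}\nc_2)\cdot (\nc_2^{-1}c), \]
with $\ell_T(\nc_1)+\ell_T(\nc_1^{-1}\nc_2)+\ell_T(\nc_2^{-1}c)=\ell_T(c)$. In particular the product of the last two factors $\Krew_c(\nc_1) = (\nc_1^{-1}\nc_2)(\nc_2^{-1}c)$ is reduced, and it has $\Krew_c(\nc_2) = \nc_2^{-1}c$ as a right factor, so $\Krew_c(\nc_2)\leq_T \Krew_c(\nc_1)$. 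Combined with the bijection statement and the general fact that an order-reversing bijection of a finite lattice is an anti-automorphism (it necessarily interchanges meets and joins), this completes the proof.

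The only mildly subtle point is ensuring, in the last step, that combining two reduced factorizations in the absolute order yields another reduced factorization; but this is automatic from the additivity of $\ell_T$ along the chain $1\leq_T \nc_1 \leq_T \nc_2 \leq_T c$, so I do not anticipate a real obstacle here.
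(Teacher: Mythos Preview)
Your argument is correct. It differs from the paper's in the anti-automorphism part: the paper works with the graphical ``barred points'' description of $\Krew_c$, arguing directly that if $\pi_1$ is coarser than $\pi_2$ then the coarsest complement of $\pi_1$ (among the barred labels) must be finer than that of $\pi_2$. You instead use the algebraic definition $\Krew_c(\pi)=\pi^{-1}c$ together with additivity of $\ell_T$ along a chain, which is cleaner and generalizes beyond $\S_n$ without change.

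One small point worth making explicit in your third step: from $\Krew_c(\pi_1)=(\pi_1^{-1}\pi_2)\cdot\Krew_c(\pi_2)$ with lengths adding, you conclude $\Krew_c(\pi_2)\leq_T\Krew_c(\pi_1)$ via a \emph{right} factor, whereas the paper's definition of $\leq_T$ is phrased in terms of \emph{left} factors. The passage from one to the other uses that $T$ is closed under conjugation, so $\ell_T$ is conjugation-invariant and hence $\ell_T\bigl((\pi_2^{-1}c)^{-1}\pi_1^{-1}c\bigr)=\ell_T(c^{-1}\pi_2\pi_1^{-1}c)=\ell_T(\pi_1^{-1}\pi_2)$. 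This is standard, but it, rather than the additivity issue you flag at the end, is the step that actually needs a word of justification.
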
%

\begin{proof} %
	Obviously, $\Kr_c^2$ is the desired rotation, since $\Kr_c$ is an involution up to relabeling $a_i \to a_{i-1}$. Since $\rot$ is bijective, $\Kr_c$ is bijective.
	$\Kr_c$ reverses the ordering of $\NC(\S_n,c)$ because if a given noncrossing partition $\nc_1$ is coarser than some other partition $\nc_2$, then the coarsest element $\overline{\nc}_1'$ such that $\nc_1 \cup \overline{\nc}_1'$ is still noncrossing must be finer than the coarsest element $\overline{\nc}_2'$ such that $\nc_2 \cup \overline{\nc}_2'$ is still noncrossing.
\end{proof}

There is an easy relation between the Kreweras complement on $\NC(\S_n,c)$ and the Kreweras complement on $\NC(\S_n,c')$.  %

\begin{proposition}
	\label{prop:nc_induction}
	Let $c \xrightarrow{k} c'$ and define \begin{align}\label{eq:alpha}\alpha_{c,k} \colon \nonumber\NC(\S_n,c) &\to \NC(\S_n,c') \\ \alpha_{c,k}(\nc) & \coloneqq s_k\nc s_k.\end{align} Then $\Krew_{c'} \circ \alpha_{c,k} = \alpha_{c,k} \circ \Krew_c$.
\end{proposition}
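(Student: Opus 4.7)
The plan is to prove the identity by direct computation, once we have verified that $\alpha_{c,k}$ is a well-defined bijection between the two noncrossing partition lattices.

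First, I would check that $\alpha_{c,k}$ does indeed send $\NC(\S_n,c)$ into $\NC(\S_n,c')$. Since conjugation by $s_k$ is an automorphism of $\S_n$ that preserves the set of reflections $T$ (indeed, $s_k T s_k = T$), it preserves absolute length and the absolute order $\leq_T$. Therefore, $\nc \leq_T c$ if and only if $s_k \nc s_k \leq_T s_k c s_k = c'$, so $\nc \in [1,c]_T$ is equivalent to $\alpha_{c,k}(\nc) \in [1,c']_T$. In particular, $\alpha_{c,k}$ is a bijection $\NC(\S_n,c) \to \NC(\S_n,c')$ with inverse given by the same formula.

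Next, I would carry out the direct computation. For $\nc \in \NC(\S_n,c)$, using $s_k^2 = 1$ and the definition of $\Krew$ from \Cref{def:krew},
\[
\Krew_{c'}(\alpha_{c,k}(\nc)) = (s_k \nc s_k)^{-1} c' = s_k \nc^{-1} s_k \cdot s_k c s_k = s_k \nc^{-1} c \, s_k = s_k \Krew_c(\nc) s_k = \alpha_{c,k}(\Krew_c(\nc)).
\]
This establishes the desired commutativity.

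There is no serious obstacle here: the statement is a formal consequence of the fact that the Kreweras complement is defined multiplicatively as $\nc \mapsto \nc^{-1} c$, and conjugation by $s_k$ interacts with this formula in the evident way since $c' = s_k c s_k$. The only point that requires any thought is the well-definedness of $\alpha_{c,k}$, which in turn reduces to the conjugation-invariance of the absolute order.
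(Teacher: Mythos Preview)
Your proof is correct and follows the same direct computation as the paper's proof, simply expanding $(s_k\nc s_k)^{-1}c' = s_k\nc^{-1}c\,s_k$. You additionally spell out why $\alpha_{c,k}$ is a well-defined bijection between the two noncrossing partition lattices, a point the paper leaves implicit.
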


\begin{proof}
	Write $s=s_k$.  Since $\Krew_c(\nc) = \nc^{-1} c$, we have \begin{align*}
		\alpha_{c,k}(\Krew_c(\nc)) & = s\Krew_c(\nc)s=s\nc^{-1}c s = (s\nc^{-1}s) (scs)            \\
		                           & = \Krew_{c'}(s\nc s) = \Krew_{c'}(\alpha_{c,k}(\nc)).\qedhere
	\end{align*}
\end{proof}

\section{Nonnesting partitions and the Kroweras complement}
\label{sec:nnps}

In this section, we define \emph{nonnesting partitions} and the \emph{Kroweras complement}.

\subsection{Nonnesting partitions}
\label{sec:nonnesting}

Let $(P,\leq)$ be a poset. An \defn{order ideal} of $P$ is a subset $I \subseteq P$ such that if $x \in I$ and $y \leq x$, then $y \in I$. Let $J(P)$ denote the set of order ideals of $P$.

\begin{definition}
	\label{def:nonnesting}
	For $W$ a Weyl group with positive root poset $\Phi^+$, the \defn{nonnesting partitions} are the order ideals in the set \[\NN(\S_n) \coloneqq J(\Phi^+).\]  The \defn{support} of a nonnesting partition $\nn \in \NN(W)$ is the set $\Supp(\nn)$ of simple roots that lie in $\nn$.
\end{definition}

\Cref{def:nonnesting} illustrates Incongruity~\eqref{it:2} from~\Cref{sec:history}: unlike $\NC(W,c)$, the definition of nonnesting partitions does \emph{not} require the choice of a Coxeter element (or, equivalently, an ordering of the simple roots).

\subsection{The Kroweras complement}
\label{sec:Kroweras}

There is a natural map \defn{rowmotion} defined on $J(P)$ that takes an order ideal $I$ to the order ideal $\row(I)$ generated by the minimal elements of $P$ not in $I$~\cite{striker2012promotion} (see~\Cref{eq:row}).  In~\cite{panyushev2009orbits}, Panyushev conjectured that rowmotion had order $h$ or $2h$ on the set of nonnesting partitions.  This conjecture was refined by Bessis and Reiner to the statement that there should be an equivariant bijection between noncrossing partitions under $\Krew_c$ and nonnesting partitions under $\row$~\cite{BessisReiner2011}.  Such a bijection was recursively constructed by Armstrong, Stump, and Thomas for bipartite Coxeter element~\cite{armstrong2013uniform}.

Let $c$ be a Coxeter element and let $\inv(\w_\circ(c)) = [\alpha^{(1)},\alpha^{(2)}, \ldots, \alpha^{(N)}]$ be the inversion sequence associated to the $c$-sorting word for the long element $w_\circ$ in~\Cref{eq:root_order}.    In~\cite{thomas2019rowmotion}, a subset of the authors showed that it is possible to compute the Kreweras complement as a sequence of flips in $\inv(\w_\circ(c))$ order on the $c$-Cambrian lattice under the tag line ``rowmotion in slow motion.''

For a poset $(P,\leq)$, define the \defn{toggle} at $x\in P$ to be the bijection $\t_x \colon J(P) \rightarrow J(P)$ defined by
\begin{equation}
	\label{def:toggle}
	\t_x(I) \coloneqq \begin{cases}
		I \cup \{x\}      & \text{ if } x \not \in I \text{ and } I \cup \{x\} \in J(P) \\
		I \setminus \{x\} & \text{ if } x \in I \text{ and } I \setminus \{x\} \in J(P) \\
		I                 & \text{ otherwise}
	\end{cases}.
\end{equation}
Any toggle $\t_x$ is an involution, and $\t_x$ commutes with $\t_y$ if $x$ does not cover $y$ and $y$ does not cover $x$.  Write \[\t_{[x_1,x_2,\ldots,x_m]} = \t_{x_m} \circ \t_{x_{m-1}} \circ \cdots \circ \t_{x_1}.\]%

It turns out that $\row$ can also be written as a product of toggles~\cite{cameron1995orbits,striker2012promotion}; for nonnesting partitions, $\row$ can be computed by toggling each root of the root poset in order of height (or \emph{row}).  It is Coxeter-theoretically natural to consider what happens if one instead applies the ``slow motion'' Kreweras complement for nonnesting partitions---using toggles instead of flips.

\begin{definition}
	\label{def:krow}
	As in~\Cref{eq:krew}, we define the \defn{$c$-Kroweras complement} on $\NN(\S_n)$ by
	\[
		\mrho_c(\nn) \coloneqq \t_{\inv(\w_\circ(c))}(\nn).
	\]
\end{definition}

\begin{example}
	Using the data from~\Cref{ex:c_sorting}, we have that
	\begin{align*}
		\mrho_c & = \t_{[(23),(13),(24),\ldots,(15),(78),(35),(45),(12)]}                                                                                  \\
		        & = \t_{(12)} \circ \t_{(45)} \circ \t_{(35)} \circ \t_{(78)} \circ \t_{(15)} \circ \cdots \circ t_{(24)} \circ \t_{(13)} \circ \t_{(23)}.
	\end{align*}
\end{example}

\begin{remark}
	\label{rem:commutation}
	The map $\mrho_c$ does not depend on the choice of reduced word $\sc$ for $c$:
	all reduced words for $c$ are related by a sequence of commutations of non-adjacent transpositions, and each commutation only affects the sequence $\alpha^{(k)}$ by interchanging pairs of roots that are not adjacent in the root poset.  By the remark after~\Cref{def:toggle}, this, therefore, does not change the composition of toggles.  We may therefore view $\mrho_c$ as the composition of toggles according to any linear extension of the poset obtained from the transitive closure of the Auslander--Reiten quiver $\AR(c)$.
\end{remark}

\begin{remark}
	\label{rem: promotion}
	For the linear Coxeter element $\co = (1, 2, \ldots, n)$, $\mrho_{\co}$ toggles roots in lexicographic order $[(1,2),(1,3),\ldots,(n-1,n)]$, which is equivalent under commuting toggles to the sequence that toggles the elements of the root poset from left to right. In \cite{striker2012promotion}, this map was termed \emph{promotion} because---viewing a nonnesting partition as a standard Young tableau of shape $2\times n$---$\mrho_{\co}$ recovers Sch\"utzenberger promotion on standard Young tableaux.  The main result of \cite{striker2012promotion} showed that promotion on nonnesting partitions was conjugate by toggles to rowmotion.  Our K-{\it row}-eras complement is a reference to this second map.
\end{remark}

\begin{example}
	\label{ex:NN Kreweras}%
	Let $c = s_1s_3s_2 \in \S_4$. The $c$-sorting word for $w_\circ$ is $s_1s_3s_2s_1s_3s_2$, and the corresponding ordering of positive roots is
	\[
		\alpha^{(1)} = (1,2), \quad \alpha^{(2)} = (3,4), \quad \alpha^{(3)} = (1,4), \quad \alpha^{(4)} = (2,4), \quad \alpha^{(5)} = (1,3), \quad \alpha^{(6)} = (2,3).
	\]
	\Cref{fig:c promotion example} illustrates the orbits of $\mrho_c$ on $\NN(4)$.
\end{example}

\begin{figure}[!ht]
	\begin{center}
		\begin{tikzpicture}[scale=0.9]
			\begin{scope}[xshift=0cm]
				\filldraw[teal!40!white] (7/16,1/16) -- (6/8,3/8) -- (8/8,1/8) -- (10/8,3/8) -- (12/8,1/8) -- (14/8,3/8) -- (33/16,1/16);
				\pretriang{4}
				\draw[<-] (4/4,7/4) -- (4/4,4/4);
				\draw[->] (6/4,7/4) -- (6/4,4/4);
			\end{scope}
			\begin{scope}[xshift=3cm]
				\filldraw[teal!40!white] (7/16,1/16) -- (6/8,3/8) -- (17/16,1/16);
				\filldraw[teal!40!white] (23/16,1/16) -- (14/8,3/8) -- (33/16,1/16);
				\pretriang{4}
				\draw[<-] (5/4,7/4) -- (5/4,4/4);
				\draw[->] (7.5/4,9/4) -- (10.5/4,9/4);
				\draw[<-] (7.5/4,2/4) -- (10.5/4,2/4);
			\end{scope}
			\begin{scope}[xshift=5cm]
				\filldraw[teal!40!white] (7/16,1/16) -- (10/8,7/8) -- (33/16,1/16);
				\pretriang{4}
				\draw[->] (5/4,7/4) -- (5/4,4/4);
			\end{scope}
			\begin{scope}[xshift=8cm]
				\filldraw[teal!40!white] (7/16,1/16) -- (8/8,5/8) -- (12/8,1/8) -- (14/8,3/8) -- (33/16,1/16);
				\pretriang{4}
				\draw[<-] (5/4,7/4) -- (5/4,4/4);
				\draw[->] (7.5/4,9/4) -- (10.5/4,9/4);
				\draw[<-] (7.5/4,2/4) -- (10.5/4,2/4);
			\end{scope}
			\begin{scope}[xshift=10cm]
				\filldraw[teal!40!white] (15/16,1/16) -- (12/8,5/8) -- (33/16,1/16);
				\pretriang{4}
				\draw[->] (7.5/4,9/4) -- (10.5/4,9/4);
				\draw[<-] (7.5/4,2/4) -- (10.5/4,2/4);
			\end{scope}
			\begin{scope}[xshift=12cm]
				\filldraw[teal!40!white] (7/16,1/16) -- (6/8,3/8) -- (8/8,1/8) -- (10/8,3/8) -- (25/16,1/16);
				\pretriang{4}
				\draw[->] (7.5/4,9/4) -- (10.5/4,9/4);
				\draw[<-] (7.5/4,2/4) -- (10.5/4,2/4);
			\end{scope}
			\begin{scope}[xshift=14cm]
				\filldraw[teal!40!white] (23/16,1/16) -- (14/8,3/8) -- (33/16,1/16);
				\pretriang{4}
				\draw[->] (5/4,7/4) -- (5/4,4/4);
			\end{scope}

			\begin{scope}[yshift=1.8cm]
				\begin{scope}[xshift=0cm]
					\pretriang{4}
				\end{scope}
				\begin{scope}[xshift=3cm]
					\filldraw[teal!40!white] (15/16,1/16) -- (10/8,3/8) -- (25/16,1/16);
					\pretriang{4}
				\end{scope}
				\begin{scope}[xshift=5cm]
					\filldraw[teal!40!white] (7/16,1/16) -- (8/8,5/8) -- (10/8,3/8) -- (12/8,5/8) -- (33/16,1/16);
					\pretriang{4}
				\end{scope}
				\begin{scope}[xshift=8cm]
					\filldraw[teal!40!white] (7/16,1/16) -- (6/8,3/8) -- (17/16,1/16);
					\pretriang{4}
				\end{scope}
				\begin{scope}[xshift=10cm]
					\filldraw[teal!40!white] (15/16,1/16) -- (10/8,3/8) -- (12/8,1/8) -- (14/8,3/8) -- (33/16,1/16);
					\pretriang{4}
				\end{scope}
				\begin{scope}[xshift=12cm]
					\filldraw[teal!40!white] (7/16,1/16) -- (8/8,5/8) -- (25/16,1/16);
					\pretriang{4}
				\end{scope}
				\begin{scope}[xshift=14cm]
					\filldraw[teal!40!white] (7/16,1/16) -- (6/8,3/8) -- (8/8,1/8) -- (12/8,5/8) -- (33/16,1/16);
					\pretriang{4}
				\end{scope}
			\end{scope}
		\end{tikzpicture}
	\end{center}

	\caption{Orbits of $\mrho_{s_1s_3s_2} = \t_{(2,3)}\t_{(1,3)}\t_{(2,4)}\t_{(1,4)}\t_{(3,4)}\t_{(1,2)}$ on the nonnesting partitions $\NN(4)$.}
	\label{fig:c promotion example}
\end{figure}

In contrast to the noncrossing side, we must work harder on the nonnesting side to relate the toggle sequence $\mrho_c$ with the toggle sequence $\mrho_{c'}$.   Our goal is to find a conjugating sequence $\beta$ of toggles with $\beta \circ \mrho_c \circ \beta^{-1} = \mrho_{c'}$. %

As $s=s_k$ is initial in $c$, we fix the cycle notation for $c$ as $(a_1, \ldots, a_k, a_{k+1}, \ldots, a_n)$ with $a_1=k$, $a_{k+1}=k+1$, $\{a_1,\ldots,a_k\}=\{1,\ldots,k\}$, and $\{a_{k+1},\ldots,a_n\}=\{k+1,\ldots,n\}$.

Define the sequences of roots
\begin{align}
	\label{eq:tk}
	\nonumber\T_{(k,\bullet)}   & \coloneqq [(k,a_n),\ldots,(k,a_{k+3}),(k,a_{k+2})],                                              \\
	\nonumber\T_{(\bullet,k+1)} & \coloneqq [(a_k,k+1),\ldots,(a_3,k+1),(a_2,k+1)],                                                \\
	\T_k                        & \coloneqq \T_{(k,\bullet)} \sqcup \T_{(\bullet,k+1)}, \text{ and }                               \\
	\nonumber \sT_k             & \coloneqq s\T_k = [(k+1,a_n),\ldots,(k+1,a_{k+3}),(k+1,a_{k+2}),(a_k,k),\ldots,(a_3,k),(a_2,k)],
\end{align}  where we use $\sqcup$ to denote concatenation.  For example, the roots in $\T_6$ and $\T'_6$ are circled in~\Cref{fig:tri9}.

\begin{proposition}
	\label{prop:nn_induction}
	Let $c \xrightarrow{k} c'$ and define \begin{align}\label{eq:beta}\nonumber\beta_{c,k}: \NN(\S_n) &\to \NN(\S_n) \\ \beta_{c,k} & \coloneqq \t_{(k,k+1)} \circ \t_{\T_k} \circ \t_{(k,k+1)}.\end{align} Then $\Krow_{c'} \circ \beta_{c,k} = \beta_{c,k} \circ \Krow_c$.
\end{proposition}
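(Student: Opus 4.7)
The plan is to verify $\Krow_{c'}\circ\beta_{c,k}=\beta_{c,k}\circ\Krow_c$ by expanding both sides as compositions of toggles on $\NN(\S_n)$ and comparing them via the commutation rules of \Cref{rem:commutation} together with the explicit description of $\AR(c')$ in terms of $\AR(c)$ from \Cref{prop:ARc}. I would call a positive root \emph{involved} if one of its coordinates lies in $\{k,k+1\}$, and \emph{uninvolved} otherwise; the involved roots are exactly $\{(k,k+1)\}\cup\T_k$. The only covers of $(k,k+1)$ in the root poset are $(k-1,k+1)$ and $(k,k+2)$, both in $\T_k$, so $\t_{(k,k+1)}$ commutes with every toggle at an uninvolved root. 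By \Cref{prop:ARc}, the uninvolved portions of $\AR(c)$ and $\AR(c')$ coincide as labeled quivers, while $(k,k+1)$ is a source of the ``rectangle'' containing it in $\AR(c)$ (since $s_k$ is initial in $c$) and becomes a sink at the opposite corner of the same rectangle in $\AR(c')$ (since $s_k$ is final in $c'$), with $\T'_k = s\T_k$ occupying the former positions of $\T_k$.

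Using \Cref{rem:commutation}, I would pick linear extensions of the two AR quivers so that
\[ \Krow_c = R\circ\t_{(k,k+1)} \qquad \text{and} \qquad \Krow_{c'}=\t_{(k,k+1)}\circ R', \]
where $R$ and $R'$ are toggle compositions over all positive roots except $(k,k+1)$. Substituting these expressions into the identity to be proved, and using $\t_{(k,k+1)}^2=\mathrm{id}$, the problem reduces to the intertwining relation
\[ R'\circ\t_{(k,k+1)}\circ\t_{\T_k} = \t_{\T_k}\circ\t_{(k,k+1)}\circ R. \]

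To prove this intertwining, I would further partition each of $R$ and $R'$ by commutation into three blocks: \emph{exterior} toggles (both coordinates in $\{1,\ldots,k-1\}$ or both in $\{k+2,\ldots,n\}$), \emph{interior} toggles ($(a_i,a_j)$ with $a_i<k<k+1<a_j$), and $\T_k$- (resp.\ $\T'_k$-)toggles. Exterior toggles commute with every involved toggle and can be pulled aside, matching identically between $R$ and $R'$. Interior toggles commute with $\t_{(k,k+1)}$ but have specific non-commuting interactions with the first layer of $\T_k$ along the rectangle boundary (e.g., $\t_{(k-1,a_j)}$ with $\t_{(k,a_j)}$, and $\t_{(a_i,k+2)}$ with $\t_{(a_i,k+1)}$). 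What remains after the exterior cancels is a purely local identity on the rectangle asserting that moving the entry corner from source to opposite sink is conjugated exactly by $\t_{(k,k+1)}\circ\t_{\T_k}$. The main obstacle will be verifying this local identity; I anticipate proving it by induction on the size of the rectangle, peeling off the outermost $\T_k$-boundary root at each step and exploiting the precise reverse-AR ordering built into the definition of $\T_k$ (first $\T_{(k,\bullet)}$ in reverse, then $\T_{(\bullet,k+1)}$ in reverse), with the base case of a minimal rectangle reduced to a direct computation.
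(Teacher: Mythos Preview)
Your initial reduction to $R'\circ\t_{(k,k+1)}\circ\t_{\T_k}=\t_{\T_k}\circ\t_{(k,k+1)}\circ R$ is correct, but the subsequent analysis has a genuine gap centred on the roots in $\sT_k$.  First, your claim that ``the involved roots are exactly $\{(k,k+1)\}\cup\T_k$'' is false: the roots with a coordinate in $\{k,k+1\}$ are $\{(k,k+1)\}\cup\T_k\cup\sT_k$.  Consequently your three-block partition of $R$ and $R'$ into exterior, interior, and $\T_k$-(resp.\ $\sT_k$-)toggles does not cover all positive roots: both $R$ and $R'$ contain \emph{all} of $\T_k\cup\sT_k$, not just one of them.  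More seriously, your assertion that ``exterior toggles commute with every involved toggle and can be pulled aside'' fails once $\sT_k$ is accounted for: an exterior root such as $(i,k-1)$ with $i<k-1$ is covered by $(i,k)\in\sT_k$, so $\t_{(i,k-1)}$ and $\t_{(i,k)}$ do not commute.  Thus the exterior block cannot simply be cancelled between the two sides, and the reduction to a ``purely local identity on the rectangle'' does not go through as described.  (Your proposed induction on the size of the rectangle is also unclear, since the rectangle is determined by $n$ and $k$.)

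The paper's argument avoids this obstruction by a different mechanism.  Rather than partitioning $R$ and $R'$, it observes (using \Cref{lem:outside}) that the roots preceding those of $\T_k$ in $\AR(c)$---other than $(k,k+1)$---all have both labels ${\leq}\,k$ or both ${\geq}\,k+1$, hence their toggles commute with every $\t_t$ for $t\in\T_k$.  This lets one cancel $\beta_{c,k}^{-1}$ against the $\T_k$-portion of $\Krow_c$, yielding a residual $\mrho_{c,k}$ in which the $\T_k$-toggles are \emph{absent}; the analogous cancellation on the $c'$ side produces $\mrho_{c',k}$.  The remaining task, $\mrho_{c,k}=\mrho_{c',k}$, is then easy precisely because with $\T_k$ removed, the $\sT_k$-toggles commute with every toggle inside the rectangle, so they can slide from their $\AR(c)$-positions to their $\AR(c')$-positions without ever having to pass an exterior root.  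The missing idea in your approach is this cancellation step: removing $\T_k$ first is what makes $\sT_k$ mobile, and is exactly what your exterior--commutation plan lacks.
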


\begin{proof}
	We show that $\Krow_c \circ \beta_{c,k}^{-1} = \beta_{c,k}^{-1} \circ \Krow_{c'}$. By the description of $\AR(c)$ in~\Cref{sec:sort_ar} and~\Cref{rem:commutation}, $\beta_{c,k}$ is the restriction of $\mrho_c$ to the roots that lie in $\T_{k}$.   Let $\mrho_{c,k}$ be the toggle sequence obtained from $\mrho_c$ by removing the toggles of the form $\t_t$ for $t \in \T_k$.  Then $\mrho_c \circ (\beta_{c,k})^{-1}=\mrho_{c,k}$, since (as shown in Figure \ref{fig: AR arround k} and proved in \cref{lem:outside}) the roots which precede those of $\T_k$ (other than $(k, k+1)$) are of the form $(i,j)$ with $i,j\leq k$ or $i,j\geq k+1$, and consequently none of these roots cover or are covered by roots in $\T_k$, so their toggles commute with those from $\T_k$.

	Write $s=s_k$.  By~\Cref{prop:ARc}, $\mrho_{c'}$ is obtained from $\mrho_c$ by moving
	$\t_{(k,k+1)}$ from the beginning of the toggle sequence to the end, and replacing
	$\t_{(i,j)}$ by $\t_{(s(i),s(j))}$.  Almost all roots are unchanged by this replacement, which has the simple effect of swapping the roots in $\T_k$ with the roots in $\sT_k$.

	So if $\mrho_{c',k}$ is the toggle sequence obtained from $\mrho_{c'}$ by removing the toggles of the form $\t_t$ for $t \in \T_k$, then the same reasoning as above shows that $(\beta_{c,k})^{-1} \circ \mrho_{c'}=\mrho_{c',k}$.

	It remains to show that $\mrho_{c,k}=\mrho_{c',k}$.
	Every $\t_t$ for $t\in \sT_k$ commutes with any $\t_r$ with $r$ in the rectangle defined by the diagonals labeled by $k$ and $k+1$--- except those $\t_r$ with $r \in \T_k$.  But these toggles no longer appear in either $\mrho_{c,k}$ or $\mrho_{c',k}$, so that we can commute the toggles from $\sT_k$ in $\mrho_{c,k}$ to their positions in $\mrho_{c',k}$.  Finally, since $\t_{(k,k+2)}$ and $\t_{(k-1,k+1)}$ have been removed from $\mrho_{c,k}$ and $\mrho_{c',k}$, $\t_{(k,k+1)}$ commutes with all remaining toggles.
\end{proof}

\section{Charmed bijections}
\label{sec:bij}

In this section, we define a general family of \emph{charmed bijections} between \emph{balanced pairs} of subsets and nonnesting partitions.  Our charmed bijections depend on a choice of decoration of the roots in $\Phi^+=\Phi^+(A_{n-1})$, and use certain \emph{intimate} families of lattice paths as intermediate objects.
Specializing these decorations to those arising from the choice of Coxeter element $c$, we obtain the \emph{$c$-charmed bijections} between $\NC(\S_n,c)$ and $\NN(\S_n)$.

\subsection{Balanced pairs and noncrossing partitions}
\begin{definition}
	Say that a pair of sets $(O,I)$ with $O,I \subseteq [n]$ is \defn{balanced} if  $|O|=|I|$ and $|O\cap [k]| \geq |I \cap [k]|$ for all $1\leq k \leq n$.  Write $\Bal(n)$ for all balanced pairs of subsets of $[n]$.
\end{definition}

We first show that balanced pairs are naturally in bijection with $c$-noncrossing partitions.  Let $\nc \in \NC(\S_n,c)$.
We define $O(\nc)$ to be the set of integers $i$ for which there exists an $j>i$ in the same block as $i$, and we define the set $I(\nc)$ to be the set of integers $j$ such that there exists an $i<j$ in the same block as $j$. It is immediate from the definition that $(O,I)\in \Bal(n)$.

\begin{proposition}
	\label{prop:bal_to_nc}
	The map\footnote{To be precise, both $O$ and $I$ depend on $c$, however since $c$ will always be clear from the context we omit it in the notation.} $\nc \mapsto (O(\nc),I(\nc))$ is a bijection between $\NC(\S_n,c)$ and $\Bal(n)$.
\end{proposition}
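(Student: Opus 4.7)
The plan is to verify well-definedness and then set up an inverse map, using the cyclic ordering from $c$ to reconstruct the partition.

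For well-definedness, both $|O(\nc)|$ and $|I(\nc)|$ equal $n - b$, where $b$ is the number of blocks of $\nc$, since each block contributes exactly one linear maximum (absent from $O$) and one linear minimum (absent from $I$). For the ballot inequality, a block-by-block computation gives
\[
    |O(\nc) \cap [k]| - |I(\nc) \cap [k]| \;=\; \#\{\text{blocks }B\text{ of }\nc : \emptyset \neq B \cap [k] \neq B\} \;\geq\; 0,
\]
so the map lands in $\Bal(n)$.

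For the bijection, note that $(O,I)$ immediately determines four disjoint subsets of $[n]$: the singletons $[n] \setminus (O \cup I)$, the block-minima $O \setminus I$, the block-maxima $I \setminus O$, and the interior elements $O \cap I$. The remaining task is to group these into blocks in the unique way that produces a $c$-noncrossing partition. I would prove this by induction on the number of non-singleton blocks, arguing that there is a unique ``innermost'' non-singleton block---one whose cyclic arc (relative to the order $a_1, a_2, \ldots, a_n$ from $c$) encloses only singletons and blocks already accounted for---and that this block is identifiable directly from $(O,I)$ together with the cyclic ordering. Peeling off this innermost block and applying the inductive hypothesis to the remaining data yields both injectivity and surjectivity simultaneously.

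The main obstacle is reconciling the linear ordering on $[n]$ (used in defining $O$, $I$, and the ballot condition) with the cyclic ordering from $c$ (governing the noncrossing condition); in particular, a naive ``process in cyclic order and maintain a stack'' algorithm fails because the linear maximum of a block need not be its cyclically last element. The crucial structural fact that makes the reduction go through is the unimodal shape $1 = a_1 < a_2 < \cdots < a_m = n > a_{m+1} > \cdots > a_n$ of any standard Coxeter element's cycle notation, recorded in~\Cref{sec:coxeter_els}; this shape controls how linear min/max data translates into cyclic adjacency and is what allows the innermost block to be pinpointed from the balanced pair alone.
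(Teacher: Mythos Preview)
Your well-definedness argument is correct and more explicit than the paper's (which simply asserts ``It is immediate from the definition that $(O,I)\in \Bal(n)$'').

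For the inverse, however, the paper gives a direct algorithmic construction rather than an induction. Given $(O,I)$, the singletons are $[n]\setminus(O\cup I)$; each element of $O\setminus I$ seeds its own \emph{open} block; then one processes the remaining elements of $O\cup I$ in increasing \emph{linear} order, attaching each new element $x$ to the first open block encountered when walking \emph{cyclically} from $x$ towards $n$ (through $1$) along the cycle of $c$. A block becomes \emph{closed} once an element of $I\setminus O$ is attached to it. The interplay ``process linearly, attach cyclically'' is precisely how the two orderings are reconciled.

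Your inductive outline, by contrast, correctly names the obstacle and the relevant structural fact (unimodality of the cycle) but does not actually resolve it: you assert that the innermost block ``is identifiable directly from $(O,I)$ together with the cyclic ordering'' without saying how. That identification is the entire content of the argument, and you have left it as a promissory note. To fill the gap you would need a concrete rule for reading off an innermost block from the balanced pair alone---and any such rule will end up encoding the same linear/cyclic interaction that the paper's algorithm makes explicit. As written, the proposal is a plan for a proof rather than a proof.
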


\begin{proof}
	We construct its inverse.  For a given pair $(O,I) \in \Bal(n)$, we can construct a $\nc \in \NC(\S_n,c)$ with $O(\nc)=O$ and $I(\nc)=I$ as follows.  The singletons of $\nc$ are the integers that are neither in $O$ nor $I$. We place each integer in $O\setminus I$ in its own block and call these blocks \defn{open}. Then we add iteratively the remaining integers of $O \cup I$ to the open blocks, starting with the smallest integer, such that the intermediate partition is always noncrossing.  This is achieved by adding an integer $x$ to the first open block we visit when walking from $x$ towards $n$ via $1$ in the cycle notation of $c$.  If an integer in $I\setminus O$ is added to a block we call this block \defn{closed} and thereafter do not add any integers to it. By construction we have $O(\nc)=O$ and $I(\nc)=I$.
\end{proof}

The bijection of~\Cref{prop:bal_to_nc} is illustrated in~\Cref{fig: pi to OI}.

\begin{figure}[htbp]
	\begin{tikzpicture}[scale=1.5]
		\def\c{{1,3,4,7,9,8,6,5,2}};
		\pgfmathtruncatemacro{\n}{dim(\c)};
		\pgfmathtruncatemacro{\no}{\n-1};
		\draw (0,0) circle (1cm);
		\foreach \i in {0,...,\no}{
				\pgfmathtruncatemacro{\j}{\c[\i]};
				\node (\j) at ({cos(360*\i/\n)},{sin(360*\i/\n)}){};
				\filldraw (\j) circle (1pt);
				\node (p\j) at ({1.2*cos(360*\i/\n)},{1.2*sin(360*\i/\n)}){\j};
			};
		\filldraw[teal] (1.center) circle (1pt);
		\filldraw[teal] (3.center) circle (1pt);
		\draw[teal, dashed, very thick] (1.center) circle (3pt);
		\draw[teal, dashed, very thick] (3.center) circle (3pt);
	\end{tikzpicture} \hspace{2cm}
	\begin{tikzpicture}[scale=1.5]
		\def\c{{1,3,4,7,9,8,6,5,2}};
		\pgfmathtruncatemacro{\n}{dim(\c)};
		\pgfmathtruncatemacro{\no}{\n-1};
		\draw (0,0) circle (1cm);
		\foreach \i in {0,...,\no}{
				\pgfmathtruncatemacro{\j}{\c[\i]};
				\node (\j) at ({cos(360*\i/\n)},{sin(360*\i/\n)}){};
				\filldraw (\j) circle (1pt);
				\node (p\j) at ({1.2*cos(360*\i/\n)},{1.2*sin(360*\i/\n)}){\j};
			};
		\filldraw[teal] (1.center) circle (1pt);
		\filldraw[teal] (3.center) circle (1pt);
		\filldraw[teal] (6.center) circle (1pt);
		\draw[teal, dashed, very thick] (3.center) circle (3pt);
		\draw[teal, dashed, thick, fill=red!40!white] (1.center) -- (6.center);
	\end{tikzpicture}

	\begin{tikzpicture}[scale=1.5]
		\def\c{{1,3,4,7,9,8,6,5,2}};
		\pgfmathtruncatemacro{\n}{dim(\c)};
		\pgfmathtruncatemacro{\no}{\n-1};
		\draw (0,0) circle (1cm);
		\foreach \i in {0,...,\no}{
				\pgfmathtruncatemacro{\j}{\c[\i]};
				\node (\j) at ({cos(360*\i/\n)},{sin(360*\i/\n)}){};
				\filldraw (\j) circle (1pt);
				\node (p\j) at ({1.2*cos(360*\i/\n)},{1.2*sin(360*\i/\n)}){\j};
			};
		\filldraw[teal] (1.center) circle (1pt);
		\filldraw[red] (3.center) circle (1pt);
		\filldraw[teal] (6.center) circle (1pt);
		\filldraw[red] (7.center) circle (1pt);
		\draw[teal, dashed, thick] (1.center) -- (6.center);
		\draw[red, thick] (3.center) -- (7.center);
	\end{tikzpicture}\hspace{2cm}
	\begin{tikzpicture}[scale=1.5]
		\def\c{{1,3,4,7,9,8,6,5,2}};
		\pgfmathtruncatemacro{\n}{dim(\c)};
		\pgfmathtruncatemacro{\no}{\n-1};
		\draw (0,0) circle (1cm);
		\foreach \i in {0,...,\no}{
				\pgfmathtruncatemacro{\j}{\c[\i]};
				\node (\j) at ({cos(360*\i/\n)},{sin(360*\i/\n)}){};
				\filldraw[red] (\j) circle (1pt);
				\node (p\j) at ({1.2*cos(360*\i/\n)},{1.2*sin(360*\i/\n)}){\j};
			};
		\draw[red, thick, fill=red!40!white] (1.center) -- (6.center) -- (8.center) -- (1.center);
		\draw[red, thick, fill=red!40!white] (3.center) -- (7.center);
	\end{tikzpicture}
	\caption{\label{fig: pi to OI} The construction of a noncrossing partition in $\NC(\S_9,c)$ for $c= s_2s_1s_3s_6s_5s_4s_8s_7 = (1\, 3\, 4\, 7\, 9\, 8\, 6\, 5\, 2)$ starting with the outgoing set $O=\{1,3,6\}$ and incoming set $I=\{6,7,8\}$. We depict open blocks in teal and dashed, and closed blocks in red. For  better visibility, we circled  open and closed singletons using the same color code.}
\end{figure}
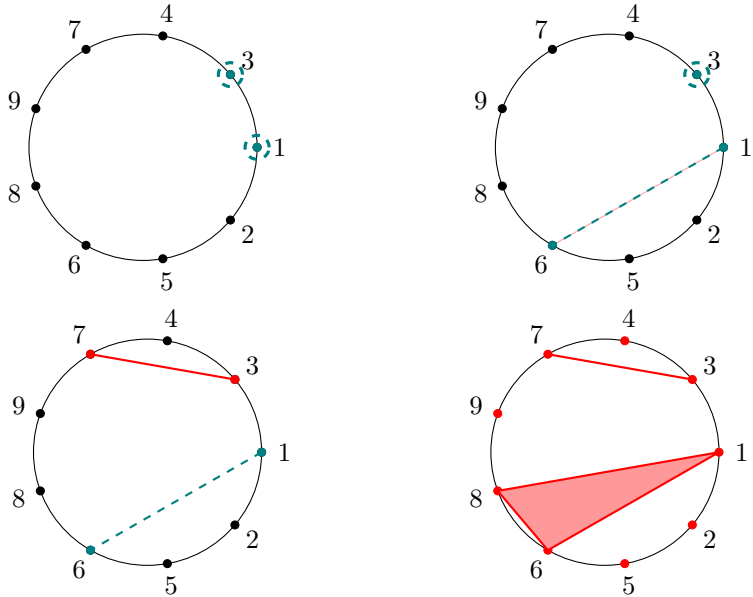

\subsection{Intimate families}
\label{sec:intimate}

Fix $n$ a positive integer and consider the type $A_{n-1}$ positive root poset $\Phi^+ \coloneqq \Phi^+(A_{n-1})$ on the positive roots $\{(i,j) \mid 1 \leq i < j \leq n \}$.  We draw the root $(i,j)$ in the plane with coordinates $(i+(j-i)/2,j-i)$ and---since the label $(i,j)$ is implied by the position---we may omit the labels on the roots. For $1 \leq i \leq n$, we draw $n$ additional points labeled by $i$ at coordinates $(i-1,0)$ and call these extra points \defn{integral vertices}. For $\M \subseteq \Phi^+$, we call a root $(i,j)$ \defn{charmed} if $(i,j) \in \M$ and \defn{ordinary} otherwise.  We depict charmed roots using hearts $\textcolor{red}{\varheartsuit}$ and ordinary roots using circles---an example of $c$-charmed roots is illustrated in \Cref{fig: root poset}.

A \defn{path} is a lattice path with step set $\{(.5,1),(.5,-1)\}$ that starts and ends at an integral vertex and stays strictly above the $x$-axis.  We call $(.5,1)$-steps \defn{up}, and $(.5,-1)$-steps \defn{down}; a \defn{peak} (resp. \defn{valley}) of a path is a root contained in an up step to its left (resp.~right) and a down step to its right (resp.~left).  Two paths are \defn{kissing} if they do not cross or share edges---they may \emph{meet} at a vertex, where they are said to \defn{kiss}. (Note, though, that kisses are not required for two paths to be kissing.) A family of paths is \defn{kissing} if they are pairwise kissing.  A path \defn{feints} at a root $(i,j)$ if $(i,j)$ is a valley, but the path does not kiss any path at $(i,j)$.  These definitions are illustrated on the right of~\Cref{fig: path configurations}: the top configuration is a feint at a charmed root, while the bottom one is a kiss at an ordinary root.

For $\M \subseteq \Phi^+$, a family $\L$ of kissing paths is called \defn{$\M$-charmed} if:
\begin{itemize}
	\item paths only kiss at charmed roots and
	\item paths only feint at ordinary roots.%
\end{itemize}

In other words, a family of paths is charmed if it avoids the two local configurations shown on the right of~\Cref{fig: path configurations}.%

\begin{definition}
	A family $\L$ of $\M$-charmed kissing paths is called \defn{$\M$-intimate} if:
	\begin{itemize}
		\item every ordinary root either lies above all paths in $\L$ or is contained in some path in $\L$ and
		\item no path contains a root above a charmed peak of a path in $\L$, unless that charmed peak is the location of a kiss.
	\end{itemize}
\end{definition}

\begin{figure}
	\begin{center}
		\begin{tikzpicture}
			\draw (1,-.5) -- (1.5,0) -- (2,-.5);
			\draw (2.5,-.5) -- (3,0) -- (3.5,.5);
			\draw (4,.5) -- (4.5,0) -- (5,-.5);
			\draw (5.5,.5) -- (6,0) -- (6.5,.5);
			\draw[very thick] (5.5,-.5) -- (6,0) -- (6.5,-.5);
			\draw (9.5,.5) -- (10,0) -- (10.5,.5);
			\marked{0}{0}
			\marked{1.5}{1.5}
			\marked{3}{3}
			\marked{4.5}{4.5}
			\marked{6}{6}
			\marked{10}{10}

			\begin{scope}[yshift=-2cm]
				\draw (1,-.5) -- (1.5,0) -- (2,-.5);
				\draw (2.5,-.5) -- (3,0) -- (3.5,.5);
				\draw (4,.5) -- (4.5,0) -- (5,-.5);
				\draw (5.5,.5) -- (6,0) -- (6.5,.5);
				\draw (9.5,.5) -- (10,0) -- (10.5,.5);
				\draw[very thick] (9.5,-.5) -- (10,0) -- (10.5,-.5);
				\draw[fill=lightgray,thick] (0,0) circle (2pt);
				\draw[fill=lightgray,thick] (1.5,0) circle (2pt);
				\draw[fill=lightgray,thick] (3,0) circle (2pt);
				\draw[fill=lightgray,thick] (4.5,0) circle (2pt);
				\draw[fill=lightgray,thick] (6,0) circle (2pt);
				\draw[fill=lightgray,thick] (10,0) circle (2pt);
			\end{scope}
		\end{tikzpicture}
	\end{center}
	\caption{{\it Left:} the five allowed local configurations for charmed families. {\it Right:} the two local configurations are forbidden for charmed families.}
	\label{fig: path configurations}
\end{figure}
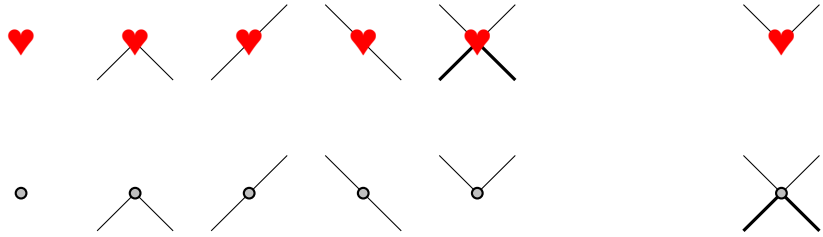

\subsection{Balanced pairs and intimate families}
We now relate balanced pairs and intimate families of paths.  For a family $\L$ of paths, we call an integral vertex on the $x$-axis \defn{outgoing} if it is incident to an up step and \defn{incoming} if it is incident to a down step (an integral vertex can be both outgoing and incoming).  Denote by $\Out(\L)$  (resp.~$\In(\L)$) the set of labels of outgoing (resp.~incoming) vertices of $\L$.  It is clear that $(\Out(\L),\In(\L))$ is balanced.

\begin{lemma}
	\label{lem: bij help in-out}
	Let $\M \subseteq \Phi^+$ and let $(O,I) \in \Bal(n)$. Then there is a unique $\M$-inimate family $\L_{(O,I)}$ with $\Out(\L_{(O,I)}) = O$ and $\In(\L_{(O,I)})=I$.
\end{lemma}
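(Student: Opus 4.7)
The plan is to prove existence and uniqueness simultaneously by an explicit deterministic construction of $\L_{(O,I)}$, working from left to right across the columns of the root poset.

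I would first observe that the balanced condition forces any kissing family with outgoing set $O$ and incoming set $I$ to have exactly $|O \cap [j-1]| - |I \cap [j-1]|$ strands crossing the vertical column between integer vertices $j-1$ and $j$; this count is an invariant of $(O,I)$ alone, so the family is determined by the heights of its strands in each column. I would then build the family greedily by sweeping from $j=1$ to $n$: at each integer $j$, new strands begin if $j \in O$ and active strands terminate if $j \in I$; between consecutive integers, each active strand takes a single up- or down-step. The intimate conditions force each step uniquely---a strand must descend precisely when doing so either (i) produces a kiss at a charmed root with a strand below it that is simultaneously forced to ascend, (ii) reaches an integer vertex in $I$ at which the strand terminates, or (iii) creates a feint at an ordinary root above which no strand currently lies---and must ascend otherwise.

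For existence, I would verify by local case analysis that the resulting family satisfies all the intimate conditions: it is pairwise kissing by construction (strands cannot cross since each column is linearly ordered by height); kisses occur only at charmed roots and feints only at ordinary roots (by the descent rule); every ordinary root is either contained in a strand or above all strands (any ordinary root not on a strand is skipped only because every strand descends past it from above, placing it above the family); and no strand lies above a non-kissing charmed peak (such a peak arises only when the descent rule for that strand is triggered by (iii) at a charmed root, which my construction would avoid). For uniqueness, I would induct on $j$: assuming $\L$ and $\L'$ are $\M$-intimate families with the same $(O,I)$ agreeing through column $j-1$, any discrepancy at column $j$ must place a kiss at an ordinary root, a feint at a charmed root, expose an ordinary root between strands, or violate the charmed-peak condition.

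The main obstacle will be showing that the greedy rule is globally self-consistent, i.e., that the descent conditions for different strands at the same column do not conflict, and that the intimate conditions are jointly tight---neither too restrictive to preclude existence nor too loose to admit alternatives. The subtle interplay occurs at valleys: two strands are forced to kiss at a charmed root precisely when each, acting alone, would violate the charmed-feint rule or expose an ordinary root between them, while at an ordinary valley a single strand is forced to feint exactly when no permissible kissing partner exists. Careful bookkeeping at integers $j \in O \cap I$, where one path closes and another opens at height $0$ in the same column, is also required to pin down the vertical ordering of strands.
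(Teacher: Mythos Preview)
Your left-to-right column sweep is a genuinely different organization from the paper's argument, and it can probably be pushed through, but the paper sidesteps exactly the obstacle you flag. Rather than determining all strands simultaneously in each column, the paper first matches $O$ against $I$ as a balanced parenthesis word (writing $(_i$ for $i\in O\setminus I$, $)_i$ for $i\in I\setminus O$, and $)_i(_i$ for $i\in O\cap I$), and then builds the family \emph{one path at a time, from innermost to outermost}. For each matched pair $(_i\,)_j$ it lays down the path from $i$ to $j$ greedily: take a down step whenever doing so keeps the family charmed, and an up step otherwise. Because all inner paths are already fixed when a new path is drawn, the descent rule is never circular; there is no ``simultaneously forced'' clause to worry about. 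Intimacy is then almost immediate: an ordinary root strictly below the new path but not on any path would contradict the greedy down rule, and any unkissed charmed peak of an inner path gets kissed by the next path built over it. Uniqueness follows because any single deviation from the greedy down rule produces a forbidden local configuration that no later path can repair.

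Your approach trades this clean recursive structure for a coordination problem among strands in the same column. The circularity in your condition~(i) is real: to decide whether the upper strand descends you need the lower strand's decision, which may in turn depend on strands further down. You could likely dissolve this by processing strands bottom-to-top within each column, but once you do that you are essentially rediscovering the paper's inside-out order in disguise, with more bookkeeping. Condition~(iii) as stated (``feint at an ordinary root above which no strand currently lies'') is also not quite the right trigger for the intimacy condition, which forbids ordinary roots lying strictly \emph{between} two paths, not merely below the top one; getting this right in a sweep requires tracking the full stack of strand heights, whereas in the paper's order it is automatic.
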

\begin{proof}
	We first construct a well-formed word of parentheses from the subsets $O$ and $I$. For $i$ from $1$ to $n$, write:
	\begin{itemize}
		\item a $(_i$ parenthesis if $i \in O\setminus I$,
		\item a $)_i$ parenthesis if $i \in I \setminus O$, and
		\item $)_i(_i$ parentheses if $i \in O \cap I$.
	\end{itemize}
	We now construct an $\M$-intimate family $\L$ recursively, starting with the empty family of paths. At each step, we pick neighbouring parentheses of the form $(_i)_j$, delete them, and add the path $\p$ that starts at $i$ and ends at $j$ that takes a down step whenever possible without violating the condition that the family is charmed, and an up step otherwise.  Then $\L \cup \{\p\}$ is intimate:
	\begin{itemize}
		\item If there were an ordinary root below $\L \cup \{\p\}$ that wasn't part of a path, then that root would lie between $p$ and $\L$, since $\L$ was intimate.  But then $\p$ took an up step instead of a possible down step, contradicting the definition of $\p$.
		\item If a previously constructed path $\p'$ in $\L$ started at an integral vertex after $i$, ended before $j$, and had a charmed peak which is not the location of a kiss, then our new path $\p$ will kiss $\p'$ at that charmed peak.
	\end{itemize}
	The order of choosing two neighbouring parentheses is irrelevant.  The family produced is unique, since if at any point a path uses a step different from those prescribed by the algorithm above, then the resulting family of paths will be non-intimate.  This non-intimacy will persist, regardless of how the family is extended.
	\qedhere
\end{proof}

An example of the algorithm used in the proof of~\Cref{lem: bij help in-out} is given in~\Cref{fig: ex for path const 1}.

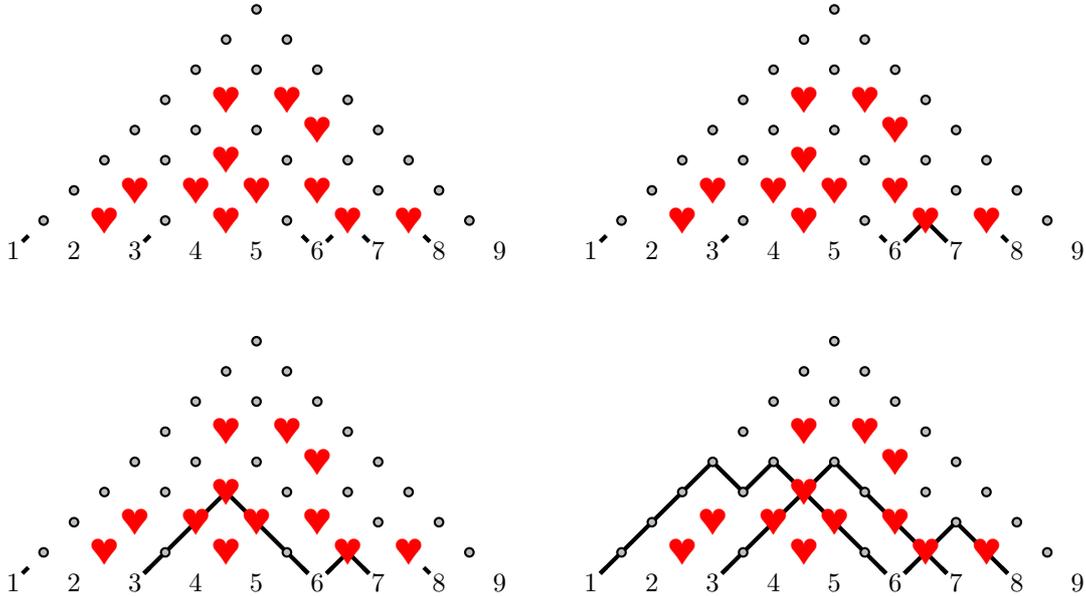
\begin{figure}[htbp]
	\begin{center}
		\begin{tikzpicture}[scale=.8]
			\ctriang
			\cbot

			\begin{scope}[xshift = 9.5cm]
				\draw[ultra thick] (6.15,.15) -- (6.5,.5) -- (6.85,.15);
				\ctriang
				\cbot
			\end{scope}

			\begin{scope}[yshift = -5.5cm]
				\draw[ultra thick] (3.15,.15) -- (4.5,1.5) -- (5.85,.15);
				\draw[ultra thick] (6.15,.15) -- (6.5,.5) -- (6.85,.15);
				\ctriang
				\cbot
			\end{scope}

			\begin{scope}[yshift = -5.5cm, xshift=9.5cm]
				\draw[ultra thick] (1.15,.15) -- (3,2) -- (3.5,1.5) -- (4,2) -- (4.5,1.5) -- (5,2) -- (6.5,.5) -- (7,1) -- (7.85,.15);
				\draw[ultra thick] (3.15,.15) -- (4.5,1.5) -- (5.85,.15);
				\draw[ultra thick] (6.15,.15) -- (6.5,.5) -- (6.85,.15);
				\ctriang
				\cbot
			\end{scope}
		\end{tikzpicture}
		\caption{
			The construction of an intimate family of paths with outgoing set $\{1,3,6\}$ and incoming set $\{6,7,8\}$.  The corresponding word of parentheses is $(_1 (_3 )_6(_6)_7)_8$.
		}
		\label{fig: ex for path const 1}
	\end{center}
\end{figure}

\subsection{Intimate families and nonnesting partitions}
Let $\L$ be an $\M$-intimate family of paths. We define the order ideal $J(\L)$ of $\L$ to be the set of all roots $(i,j)$ which lie on or below a path in $\L$. It is clear that $J(\L)$ is an order ideal and hence is in $\NN(\S_n)$.

\begin{lemma}
	\label{lem: bij help order ideal}
	Let $\M\subseteq \Phi^+$ and $J \in \NN(\S_n)$. Then there exists a unique $\M$-intimate family $\L_J$ with $J(\L_J)=J$.
\end{lemma}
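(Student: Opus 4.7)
The plan is to reduce to~\Cref{lem: bij help in-out} by associating to each $J \in \NN(\S_n)$ a canonical balanced pair. Given $J$, set
\[
O(J) \coloneqq \{i \in [n] : (i,i{+}1) \in J,\ (i{-}1,i{+}1) \notin J\},\qquad
I(J) \coloneqq \{i \in [n] : (i{-}1,i) \in J,\ (i{-}1,i{+}1) \notin J\},
\]
with the convention that any putative root whose indices fall outside $\{1,\ldots,n\}$ is not in $J$. A short case analysis at each integer $i$, using that $J$ is an order ideal (so $(i{-}1,i{+}1) \in J$ forces both $(i{-}1,i)$ and $(i,i{+}1)$ to lie in $J$), shows that the associated parenthesization from the proof of~\Cref{lem: bij help in-out} is well-formed; in particular, $(O(J), I(J)) \in \Bal(n)$.

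For uniqueness, I would show that any $\M$-intimate family $\L$ with $J(\L)=J$ satisfies $(\Out(\L), \In(\L)) = (O(J), I(J))$, at which point~\Cref{lem: bij help in-out} forces $\L = \L_{(O(J), I(J))}$. Since paths stay strictly above the $x$-axis except at their endpoints and change height by $\pm 1$ per half-step, path heights at integer horizontal positions must be either $0$ (only possible at an endpoint) or even and $\geq 2$. Hence a path is incident to $(i,0)$ iff no path attains height $\geq 2$ there, iff $J$ contains no root $(i{-}k,i{+}k)$ for $k\geq 1$, iff (by the order-ideal property) $(i{-}1,i{+}1) \notin J$. Whether such a path leaves $i$ to the right (resp.\ arrives at $i$ from the left) is then detected by $(i,i{+}1) \in J$ (resp.\ $(i{-}1,i) \in J$), because kissing paths do not share edges, so at most one path occupies height $1$ at any half-integer horizontal position.

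For existence, set $\L_J \coloneqq \L_{(O(J),I(J))}$. The map $\L \mapsto J(\L)$ from $\M$-intimate families to $\NN(\S_n)$ is injective by the previous paragraph, while its domain has cardinality $|\Bal(n)| = |\NC(\S_n,c)| = \Cat(\S_n)$ by~\Cref{lem: bij help in-out} and~\Cref{prop:bal_to_nc}, which matches $|\NN(\S_n)| = \Cat(\S_n)$. The injection between finite sets of equal size is a bijection, yielding existence. The main obstacle is the uniqueness paragraph: one must carefully exploit the parity constraints on integer versus half-integer horizontal positions, combined with the edge-disjointness of kissing paths and the order-ideal property of $J$, to force the endpoint data of $\L$ to match $(O(J), I(J))$ exactly.
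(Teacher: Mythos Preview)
Your uniqueness argument contains a genuine error: the proposed formulas for $O(J)$ and $I(J)$ do not involve $\M$, yet the balanced pair $(\Out(\L),\In(\L))$ of the $\M$-intimate family with $J(\L)=J$ \emph{does} depend on $\M$. Concretely, take $n=5$ and $J=\Phi^+$. With $\M=\emptyset$ the unique $\M$-intimate family consists of a straight path from $1$ to $5$ together with a straight path from $2$ to $4$ (both paths are needed so that every ordinary root lies on some path), giving $(\Out,\In)=(\{1,2\},\{4,5\})$; with $\M=\Phi^+$ the single straight path from $1$ to $5$ is already intimate, giving $(\Out,\In)=(\{1\},\{5\})$. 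Your formula yields $(\{1\},\{5\})$ in both cases. The specific step that fails is the claimed equivalence ``a path is incident to $(i,0)$ iff no path attains height $\geq 2$ there'': the parity observation only tells you that heights at integer positions are even, not that two different paths cannot occupy heights $0$ and $2$ simultaneously at the same horizontal coordinate.

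Your high-level strategy---prove uniqueness by showing the endpoint data is determined, then get existence by counting---is a legitimate alternative to the paper's direct recursive construction, but the recovery of $(\Out(\L),\In(\L))$ from $J$ must be $\M$-sensitive. In effect one has to peel off the topmost path (determined by the maximal elements of $J$), then pass to a \emph{smaller} order ideal that depends on which roots beneath that path are charmed, and iterate; this is exactly what the paper does. A closed formula at the level of simple roots alone cannot see the charming pattern and therefore cannot work.
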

\begin{proof}
	We construct an $\M$-intimate family $\L$ recursively, starting with the empty family of paths.   At each step, we add a maximal path $p$ to $\L$ such that all roots contained in $p$ lie in $J$.  We then replace $J$ by the order ideal generated by all ordinary roots in $J$ not contained in a path of $\L$, all charmed feints of paths of $\L$, and roots in $J$ not lying below a path of $\L$.  The recursion stops when $J$ is empty.  It is clear that the resulting family $\L$ is the unique $\M$-intimate family of paths with order ideal $J$.
\end{proof}%
An example of the algorithm used in the proof of~\Cref{lem: bij help order ideal} is given in~\Cref{fig: ex for path const 2}.

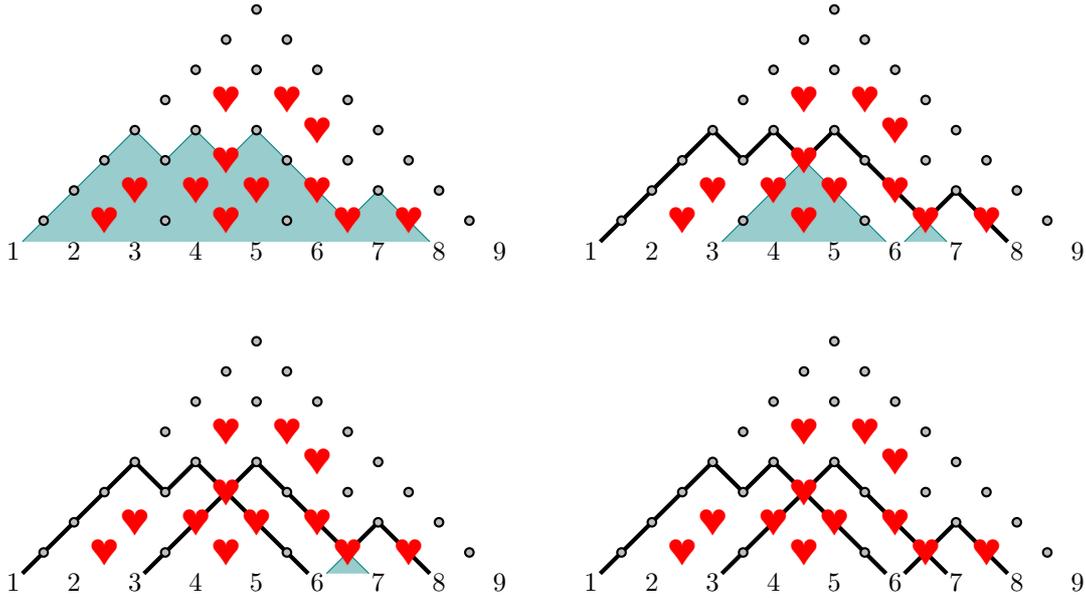
\begin{figure}[htbp]
	\begin{center}
		\begin{tikzpicture}[scale=.8]
			\draw[teal, fill=teal!40!white] (1.15,.15) -- (3,2) -- (3.5,1.5) -- (4,2) -- (4.5,1.5) -- (5,2) -- (6.5,.5) -- (7,1) -- (7.85,.15);
			\ctriang

			\begin{scope}[xshift = 9.5cm]
				\draw[teal, fill=teal!40!white] (3.15,.15) -- (4.5,1.5) -- (5.85,.15);
				\draw[teal, fill=teal!40!white] (6.15,.15) -- (6.5,.5) -- (6.85,.15);
				\draw[ultra thick] (1.15,.15) -- (3,2) -- (3.5,1.5) -- (4,2) -- (4.5,1.5) -- (5,2) -- (6.5,.5) -- (7,1) -- (7.85,.15);
				\ctriang
			\end{scope}

			\begin{scope}[yshift = -5.5cm, xshift=0cm]
				\draw[teal, fill=teal!40!white] (6.15,.15) -- (6.5,.5) -- (6.85,.15);
				\draw[ultra thick] (1.15,.15) -- (3,2) -- (3.5,1.5) -- (4,2) -- (4.5,1.5) -- (5,2) -- (6.5,.5) -- (7,1) -- (7.85,.15);
				\draw[ultra thick] (3.15,.15) -- (4.5,1.5) -- (5.85,.15);
				\ctriang
			\end{scope}

			\begin{scope}[yshift = -5.5cm, xshift=9.5cm]
				\draw[ultra thick] (1.15,.15) -- (3,2) -- (3.5,1.5) -- (4,2) -- (4.5,1.5) -- (5,2) -- (6.5,.5) -- (7,1) -- (7.85,.15);
				\draw[ultra thick] (3.15,.15) -- (4.5,1.5) -- (5.85,.15);
				\draw[ultra thick] (6.15,.15) -- (6.5,.5) -- (6.85,.15);
				\ctriang
			\end{scope}

		\end{tikzpicture}
	\end{center}
	\caption{
		The construction of the intimate family of lattice paths with order ideal $J$, where $J$ contains the roots in the grey-shaded region of the top left picture.  At each step, the order ideal under consideration consists of the roots contained in the teal-shaded region.
	}
	\label{fig: ex for path const 2}
\end{figure}

\subsection{Charmed bijections between balanced pairs and nonnesting partitions}
As a direct consequence of~\Cref{lem: bij help in-out,lem: bij help order ideal}, we obtain the following family of bijections between balanced pairs and nonnesting partitions.
\begin{proposition}
	\label{prop: J_S bijection}
	Fix a collection of charmed roots $\M \subseteq \Phi^+$. Then the map $J_\M \colon \Bal(n) \to \NN(\S_n)$ defined by $J_\M(O,I) = J(\L_{(O,I)})$ is a bijection.
\end{proposition}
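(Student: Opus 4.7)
The plan is to observe that the two preceding lemmas together establish bijections through an intermediate set, and then to take the composition. Let $\mathcal{F}_\M$ denote the set of all $\M$-intimate families of paths on $[n]$. The strategy is to exhibit bijections $\Bal(n) \leftrightarrow \mathcal{F}_\M \leftrightarrow \NN(\S_n)$ and verify that $J_\M$ is their composition.

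First, I would extract a bijection $\Phi_1 \colon \Bal(n) \to \mathcal{F}_\M$ from \Cref{lem: bij help in-out}. The map $\Phi_1(O,I) \coloneqq \L_{(O,I)}$ is well-defined (existence) and injective (since different pairs $(O,I)$ have different outgoing/incoming integral vertices, hence yield different families). For surjectivity, given any $\M$-intimate family $\L$, the text's observation that $(\Out(\L), \In(\L)) \in \Bal(n)$ provides a pre-image: by the uniqueness clause of \Cref{lem: bij help in-out}, we have $\L_{(\Out(\L),\In(\L))} = \L$. In other words, $\L \mapsto (\Out(\L), \In(\L))$ is the inverse of $\Phi_1$.

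Next, I would extract an analogous bijection $\Phi_2 \colon \NN(\S_n) \to \mathcal{F}_\M$ from \Cref{lem: bij help order ideal}. The map $\Phi_2(J) \coloneqq \L_J$ is well-defined and injective, since $J(\L_J) = J$ recovers $J$ from $\L_J$. Surjectivity follows since any $\L \in \mathcal{F}_\M$ determines an order ideal $J(\L) \in \NN(\S_n)$, and the uniqueness clause of \Cref{lem: bij help order ideal} yields $\L_{J(\L)} = \L$; thus $\L \mapsto J(\L)$ is the inverse of $\Phi_2$.

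Finally, unwinding the definition $J_\M(O,I) = J(\L_{(O,I)})$ shows that $J_\M = \Phi_2^{-1} \circ \Phi_1$, so it is a bijection as a composition of bijections. There is no real obstacle here, because the substantive content is already contained in the two lemmas; the proposition is essentially the statement that the two descriptions of $\M$-intimate families — one by incoming/outgoing data, the other by the order ideal they trace out — are compatible via composition.
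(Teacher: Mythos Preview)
Your proposal is correct and takes essentially the same approach as the paper: the paper simply states that the proposition is a direct consequence of \Cref{lem: bij help in-out} and \Cref{lem: bij help order ideal}, and you have spelled out exactly how those two lemmas combine via the intermediate set of $\M$-intimate families to give the bijection.
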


\begin{remark}
	Charmed roots along the upper boundary of $\Phi^+$ do not affect the bijection of Proposition \ref{prop: J_S bijection}.  On the other hand, each of the $2^{\binom{n-2}{2}}$ charming choices for the the roots $(i,j)$ with $1 < i < j < n$ gives rise to a distinct bijection between $\NN(\S_n)$ and $\Bal(n)$, as we now prove.

	Let $\M \subseteq \Phi^+$ and $\nn\in \NN(\S_n)$. We need the following two observations.
	\begin{enumerate}
		\item The preimage $J_\M^{-1}(\nn)$ depends only on $\nn$ and $\M \cap \nn$, i.e., $J_\M^{-1}(\nn)=J_{\M\cap \nn}^{-1}(\nn)$.

		\item Let $(O,I)$ be the preimage $J_\M^{-1}(\nn)$ and $\p$ be the first path in the recursive construction of the $\M$-intimate family of paths as in the proof of~\Cref{lem: bij help order ideal} and $\nn^\prime$ be the corresponding order ideal.  Denote by $(O^\prime,I^\prime) =  J_\M^{-1}(\nn^\prime) = J_{\M\cap \nn^\prime}^{-1}(\nn^\prime)$. Then $O$ is the union of  $O^\prime$ and the starting point of $\p$, and $I$ is the union of $I^\prime$ and the end point of $\p$.
	\end{enumerate}

	Let $\M_1$ and $\M_2$ be different subsets of $\{(i,j) \mid 1 <i<j<n\}$ and assume that the bijections $J_{\M_1}$ and $J_{\M_2}$ are equal. Denote by $(i_0,j_0)$ a root with $j_0-i_0$ minimal such that it is in exactly one of the two sets $\M_1$ or $\M_2$; without loss of generality, we assume $(i_0,j_0) \in \M_1 \setminus \M_2$. Let $\nn$ be the order ideal generated by the roots $(i_0-1,j_0)$ and $(i_0,j_0+1)$ and let $(O,I)=J_{\M_1}^{-1}(\nn)=J_{\M_2}^{-1}(\nn)$.
	Denote by $\p_1$ and $\nn_1^\prime$ (resp. $\p_2$ and $\nn_2^\prime$) the path and order ideal appearing in the recursive construction of $J^{-1}_{\M_1}(\nn)$ (resp. $J^{-1}_{\M_2}(\nn)$) as described above. The paths $\p_1$ and $\p_2$ coincide since both are maximal. This implies by the second observation $J_{\M_1}^{-1}(\nn_1^\prime)=J_{\M_2}^{-1}(\nn_2^\prime)$. Since $(i_0,j_0) \notin \M_2$,%
	the order ideal $\nn_2^\prime$ is by construction contained in the set of roots $\{(i,j) \in \Phi^+; (i,j) < (i_0,j_0)\}$. By minimality of $(i_0,j_0)$ we therefore have $\nn_2^\prime \cap \M_1 = \nn_2^\prime \cap \M_2$. The first above observation implies
	\[
		J^{-1}_{\M_2}(\nn_2^\prime) = J^{-1}_{\M_2 \cap \nn_2^\prime}(\nn_2^\prime) = J^{-1}_{\M_1 \cap \nn_2^\prime}(\nn_2^\prime) = J^{-1}_{\M_1}(\nn_2^\prime),
	\]
	and hence $J_{\M_1}^{-1}(\nn_1^\prime)=J_{\M_1}^{-1}(\nn_2^\prime)$. However since $J_{\M_1}$ is a bijection and $\nn_1^\prime \neq \nn_2^\prime$ this is a contradiction.
\end{remark}

\subsection{Charmed bijections between noncrossing and nonnesting partitions}
\label{sec: the bijection}
Let $c \in \S_n$ be a Coxeter element.
Using~\Cref{prop:bal_to_nc} and~\Cref{prop: J_S bijection}, we now construct a bijection between $c$-noncrossing partitions and nonnesting partitions \emph{that depends on the choice of Coxeter element $c$}.  This addresses Incongruity~\eqref{it:2} from~\Cref{sec:history}.
\begin{definition}
	\label{def:bij}
	The \defn{$c$-charmed bijection} between $c$-noncrossing partitions and nonnesting partitions is given by
	\begin{align*}
		\Charm_c \colon \NC(\S_n,c) & \rightarrow \NN(\S_n)            \\
		\nc                         & \mapsto J_{\M_c}(O(\nc),I(\nc)),
	\end{align*}
	where the set $\heart_c$ is defined in~\Cref{def:charmed} and the map $J_{\M_c}$ is defined in \Cref{prop: J_S bijection}.
\end{definition}

In~\Cref{sec:proof}, we will show that $\Charm_c$ is the desired bijection from the introduction. We give also an alternative description for the inverse in \Cref{cor:tails} which allows one to directly read the blocks of the noncrossing partition from the corresponding intimate family.

\section{Proof of the main theorem}
\label{sec:proof}

In this section, we prove \Cref{thm:main_theorem}, which states that the bijection $\Charm_c$ from~\Cref{sec: the bijection} is the unique support-preserving bijection between noncrossing and nonnesting partitions that is equivariant with respect to $\Krew_c$ and $\Krow_c$.

\begin{figure}[htbp]
	\begin{tikzpicture}[scale=4]

		\node (00) at (0,0) {\scalebox{0.3}{\begin{tikzpicture}
					\draw[teal, fill=teal!40!white] (1.15,.15) -- (3.5,2.5) -- (4,2) -- (4.5,2.5) -- (6.85,.15);
					\draw[teal, fill=teal!40!white] (7.15,.15) -- (8,1) -- (8.85,.15);
					\draw[ultra thick] (3.15,.15) -- (3.5,.5) -- (3.85,.15);
					\draw[ultra thick] (5.15,.15) -- (5.5,.5) -- (5.85,.15);
					\draw[ultra thick] (7.15,.15) -- (8,1) -- (8.85,.15);
					\draw[ultra thick] (2.15,.15) -- (3.5,1.5) -- (4.85,.15);
					\draw[ultra thick] (1.15,.15) -- (3.5,2.5) -- (4,2) -- (4.5,2.5) -- (6.85,.15);

					\ctriang
				\end{tikzpicture}}};

		\node (30) at (3,0) {\scalebox{0.3}{\begin{tikzpicture}
					\draw[teal, fill=teal!40!white] (1.15,.15) -- (3.5,2.5) --  (5.5,.5) -- (6,1) -- (6.5,.5) -- (7,1) -- (7.5,.5) -- (8,1) -- (8.85,.15);
					\draw[ultra thick] (3.15,.15) -- (3.5,.5) -- (3.85,.15);
					\draw[ultra thick] (5.15,.15) -- (5.5,.5) -- (5.85,.15);
					\draw[ultra thick] (6.15,.15) -- (6.5,.5) -- (6.85,.15);
					\draw[ultra thick] (2.15,.15) -- (3.5,1.5) -- (4.85,.15);
					\draw[ultra thick] (1.15,.15) -- (3.5,2.5) --  (5.5,.5) -- (6,1) -- (6.5,.5) -- (7,1) -- (7.5,.5) -- (8,1) -- (8.85,.15);

					\cptriang
				\end{tikzpicture}}};

		\node (21) at (2,1) {\scalebox{0.3}{\begin{tikzpicture}
					\draw[teal, fill=teal!40!white] (7.15,.15) -- (7.5,.5) -- (7.85,.15);
					\draw[teal, fill=teal!40!white] (1.15,.15) -- (3,2) -- (3.5,1.5) -- (4.5,2.5) -- (6.85,.15);
					\draw[ultra thick] (7.15,.15) -- (7.5,.5) -- (7.85,.15);
					\draw[ultra thick] (3.15,.15) -- (4.5,1.5) -- (5.85,.15);
					\draw[ultra thick] (1.15,.15) -- (3,2) -- (3.5,1.5) -- (4.5,2.5) -- (6.85,.15);
					\cptriang
				\end{tikzpicture}}};

		\node (33) at (3,3) {\scalebox{0.7}{\begin{tikzpicture}
					\def\c{{1,3,4,6,9,8,7,5,2}};
					\pgfmathtruncatemacro{\n}{dim(\c)};
					\pgfmathtruncatemacro{\no}{\n-1};
					\draw (0,0) circle (1cm);
					\foreach \i in {0,...,\no}{
							\pgfmathtruncatemacro{\j}{\c[\i]};
							\node (\j) at ({cos(360*\i/\n)},{sin(360*\i/\n)}){};
							\filldraw[red] (\j) circle (1pt);
							\node (p\j) at ({1.2*cos(360*\i/\n)},{1.2*sin(360*\i/\n)}){\j};
						};
					\draw[red, thick, fill=red!40!white] (1.center) -- (6.center) -- (9.center) -- (1.center);
					\draw[red, thick, fill=red!40!white] (2.center) -- (5.center) -- (7.center) -- (2.center);
					\draw[red, thick, fill=red!40!white] (3.center) -- (4.center);
				\end{tikzpicture}}};

		\node (03) at (0,3) {\scalebox{0.7}{\begin{tikzpicture}
					\def\c{{1,3,4,7,9,8,6,5,2}};
					\pgfmathtruncatemacro{\n}{dim(\c)};
					\pgfmathtruncatemacro{\no}{\n-1};
					\draw (0,0) circle (1cm);
					\foreach \i in {0,...,\no}{
							\pgfmathtruncatemacro{\j}{\c[\i]};
							\node (\j) at ({cos(360*\i/\n)},{sin(360*\i/\n)}){};
							\filldraw[red] (\j) circle (1pt);
							\node (p\j) at ({1.2*cos(360*\i/\n)},{1.2*sin(360*\i/\n)}){\j};
						};
					\draw[red, thick, fill=red!40!white] (1.center) -- (7.center) -- (9.center) -- (1.center);
					\draw[red, thick, fill=red!40!white] (2.center) -- (5.center) -- (6.center) -- (2.center);
					\draw[red, thick, fill=red!40!white] (3.center) -- (4.center);
				\end{tikzpicture}}};

		\node (11) at (1,1) {\scalebox{0.3}{\begin{tikzpicture}
					\draw[teal, fill=teal!40!white] (1.15,.15) -- (3,2) -- (3.5,1.5) -- (4,2) -- (4.5,1.5) -- (5,2) -- (6.5,.5) -- (7,1) -- (7.85,.15);
					\draw[ultra thick] (1.15,.15) -- (3,2) -- (3.5,1.5) -- (4,2) -- (4.5,1.5) -- (5,2) -- (6.5,.5) -- (7,1) -- (7.85,.15);
					\draw[ultra thick] (3.15,.15) -- (4.5,1.5) -- (5.85,.15);
					\draw[ultra thick] (6.15,.15) -- (6.5,.5) -- (6.85,.15);
					\ctriang
				\end{tikzpicture}}};

		\node (12) at (1,2) {\scalebox{0.7}{\begin{tikzpicture}
					\def\c{{1,3,4,7,9,8,6,5,2}};
					\pgfmathtruncatemacro{\n}{dim(\c)};
					\pgfmathtruncatemacro{\no}{\n-1};
					\draw (0,0) circle (1cm);
					\foreach \i in {0,...,\no}{
							\pgfmathtruncatemacro{\j}{\c[\i]};
							\node (\j) at ({cos(360*\i/\n)},{sin(360*\i/\n)}){};
							\filldraw[red] (\j) circle (1pt);
							\node (p\j) at ({1.2*cos(360*\i/\n)},{1.2*sin(360*\i/\n)}){\j};
						};
					\draw[red, thick, fill=red!40!white] (1.center) -- (6.center) -- (8.center) -- (1.center);
					\draw[red, thick, fill=red!40!white] (3.center) -- (7.center);
				\end{tikzpicture}}};

		\node (22) at (2,2) {\scalebox{0.7}{\begin{tikzpicture}
					\def\c{{1,3,4,6,9,8,7,5,2}};
					\pgfmathtruncatemacro{\n}{dim(\c)};
					\pgfmathtruncatemacro{\no}{\n-1};
					\draw (0,0) circle (1cm);
					\foreach \i in {0,...,\no}{
							\pgfmathtruncatemacro{\j}{\c[\i]};
							\node (\j) at ({cos(360*\i/\n)},{sin(360*\i/\n)}){};
							\filldraw[red] (\j) circle (1pt);
							\node (p\j) at ({1.2*cos(360*\i/\n)},{1.2*sin(360*\i/\n)}){\j};
						};
					\draw[red, thick, fill=red!40!white] (1.center) -- (7.center) -- (8.center) -- (1.center);
					\draw[red, thick, fill=red!40!white] (3.center) -- (6.center);
				\end{tikzpicture}}};

		\draw[->] (00) -- node[rectangle,fill=white,align=center] {$\beta_{c,k}$} (30);
		\draw[->] (11) -- node[rectangle,fill=white,align=center] {$\beta_{c,k}$} (21);
		\draw[->] (12) -- node[rectangle,fill=white,align=center] {$\alpha_{c,k}$} (22);
		\draw[->] (03) -- node[rectangle,fill=white,align=center] {$\alpha_{c,k}$} (33);
		\draw[->] (03) -- node[rectangle,fill=white,align=center] {$\Charm_c$} (00);
		\draw[->] (12) -- node[rectangle,fill=white,align=center] {$\Charm_c$} (11);
		\draw[->] (33) -- node[rectangle,fill=white,align=center] {$\Charm_{c'}$} (30);
		\draw[->] (22) -- node[rectangle,fill=white,align=center] {$\Charm_{c'}$} (21);
		\draw[->] (12) -- node[rectangle,fill=white,align=center] {$\Krew_c$} (03);
		\draw[->] (22) -- node[rectangle,fill=white,align=center] {$\Krew_{c'}$} (33);
		\draw[->] (11) -- node[rectangle,fill=white,align=center] {$\Krow_c$} (00);
		\draw[->] (21) -- node[rectangle,fill=white,align=center] {$\Krow_{c'}$} (30);
	\end{tikzpicture}
	\caption{\Cref{fig:outline}, where the vertices previously labeled by the sets of noncrossing and nonnesting partitions are now replaced by our running example for the choice of Coxeter element $c= s_2s_1s_3s_6s_5s_4s_8s_7\in \S_9$ with cycle notation $(1\, 3\, 4\, 7\, 9\, 8\, 6\, 5\, 2)$.}
	\label{ex:running_example}
\end{figure}
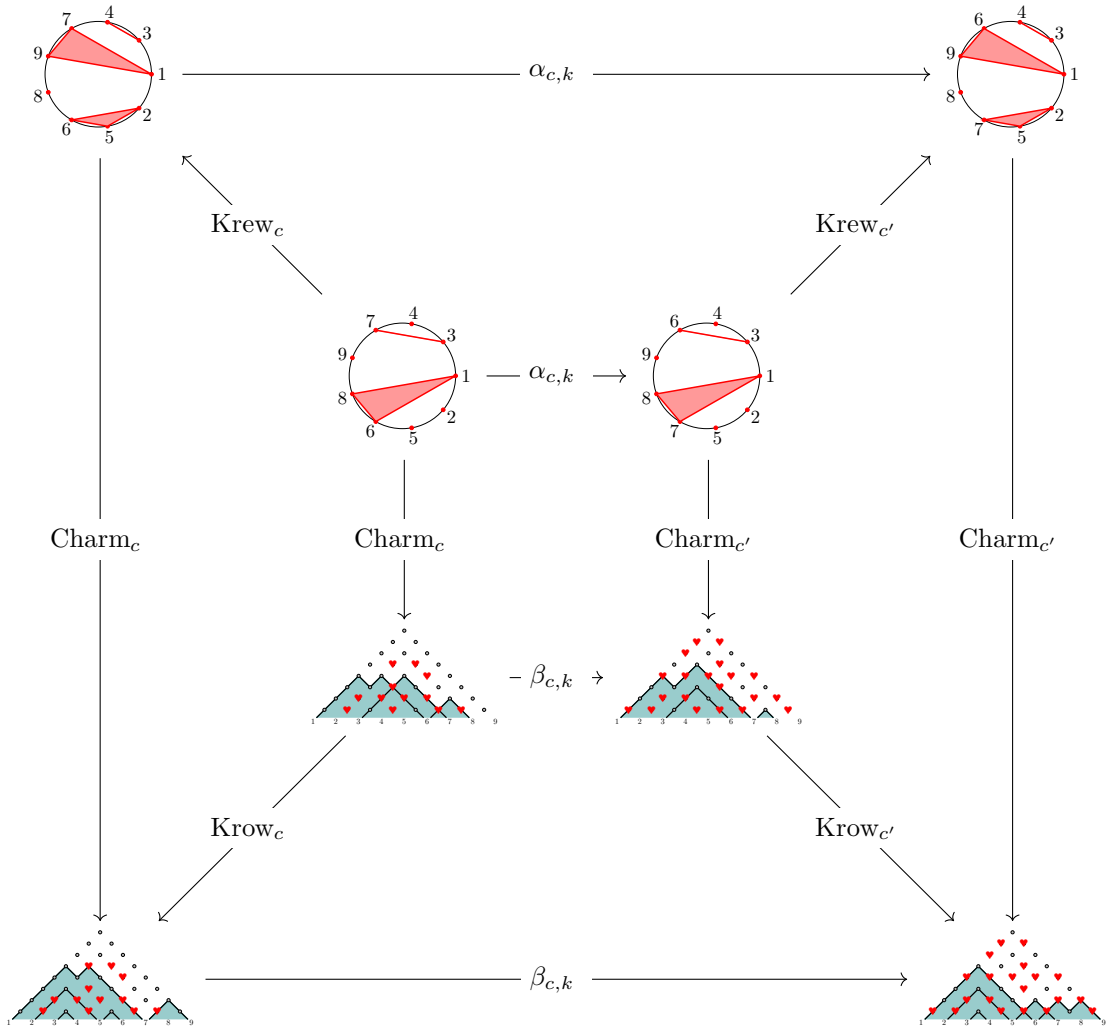

\subsection{Cambrian induction}
\label{sec:cambrian_induction}

Fix the symmetric group $\S_n$, let $c$ be a Coxeter element, and let $s=s_k=(k,k+1)$ be initial in $c$. Then $c' = scs$ is also a Coxeter element of $\S_n$, and recall from~\Cref{sec:coxeter_els} that we write $c \xrightarrow{k} c'$.%

We will prove \Cref{thm:main_theorem} using \defn{Cambrian induction}---that is, we will show that the theorem holds for a particular Coxeter element $\co$ (the \emph{base case}), and then we will show that if $c \rightarrow c'$ and the theorem holds for $c$, then the theorem also holds for $c'$ (the \emph{inductive step}).  By~\Cref{lem:initial_conj}, all Coxeter elements in $\S_n$ are conjugate by a sequence of conjugations by initial simple reflections, so the theorem holds for all Coxeter elements.

\begin{remark}
	Note that this proof technique differs from the usual Cambrian induction used in~\cite{reading2007clusters,armstrong2013uniform}: the base case of our proof is the statement for the linear Coxeter element in $\S_n$, and not the statement for the Coxeter group $\S_2$---that is, we do not need to descend to parabolic subgroups.
\end{remark}

\subsection{Base case}

For the base case, we use the linear Coxeter element $\co = s_1s_2 \cdots s_{n-1}$, with one-line notation $(1\, 2\, \dots\, n)$.   We show that $\Charm_c$ is an equivariant bijection between $\NC(\S_n,\co)$ under $\Krew_{\co}$ and $\NN(\S_n)$ under $\Krow_{\co}$.  In this case, $\Krow_{\co}$ is order ideal promotion, as defined by Striker and Williams~\cite{striker2012promotion} (see \Cref{rem: promotion}), which toggles roots in lexicographic order.  There are well-known equivariant bijections (see \cite[proof of Theorem 8.2]{Rhoades2010} and \cite[Theorems 3.9 and 3.10]{striker2012promotion})
\[
	\binom{\NC(\S_n,\co),}{\text{Kreweras complement}} \xrightarrow{\thicken} \binom{\text{noncrossing matchings of } [2n],}{\text{clockwise rotation}} \xrightarrow{\dyck} \binom{\NN(\S_n),}{\text{promotion}}.
\]
The bijection $\thicken$ from a noncrossing partition $\pi$ on $\{1,2,\ldots,n\}$ to a noncrossing matching on $\{1,\overline{1},2,\overline{2},\ldots,n,\overline{n}\}$ goes by a ``thickening'' operation, as illustrated at the top left of \Cref{fig:known bijections}; thickening produces even-length cycles, and to get a matching from an even-length cycle, we take the edges starting from a non-barred letter and going clockwise (which end on a barred letter by construction). %
To produce a nonnesting partition from a noncrossing matching, the bijection $\dyck$ reads each vertex $1,\overline{1},2,\overline{2},\ldots,n,\overline{n}$ in counter-clockwise order---each letter that opens a new block in the noncrossing matching becomes a NE step in a Dyck path, while each letter closing a block becomes a SE step.  For $\co$, there are no charmed roots, so the paths in the corresponding intimate family are shifts of the initial Dyck path, removing the lower two strips of steps each time.  It is clear from this description that the resulting balanced pair $(O,I)$ recovers the balanced pair $(O(\pi),I(\pi))$ for the original noncrossing partition $\pi$, so that $\dyck \circ \thicken = \bijK_{\co}$.  This is illustrated on the bottom of \Cref{fig:known bijections}.

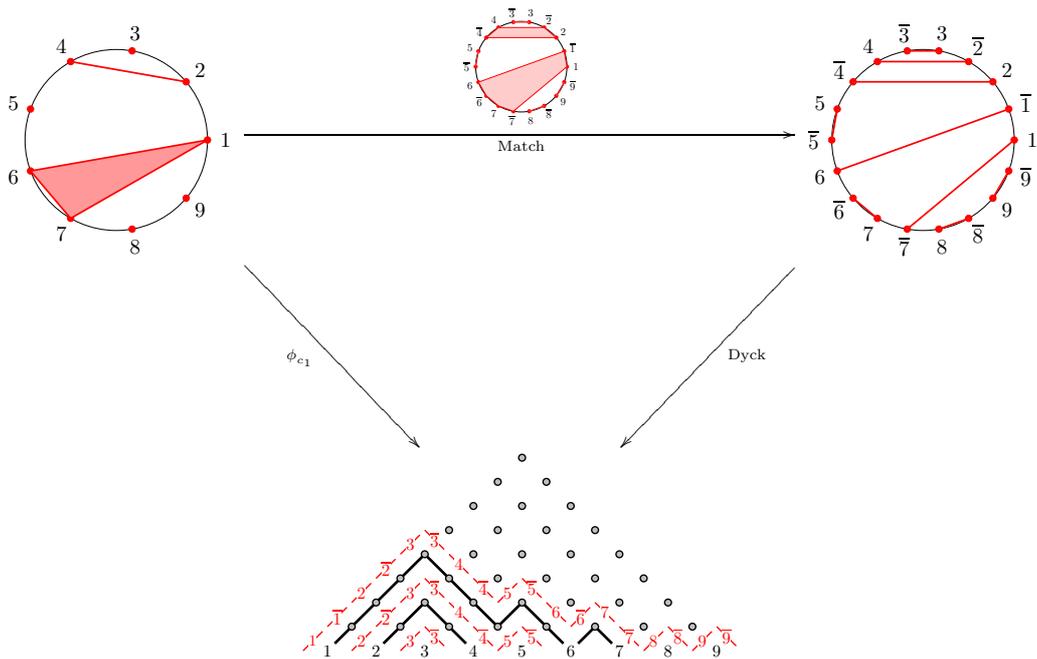
\begin{figure}[htbp]
	\centering

	\scalebox{.8}{
	\xymatrix{
	\raisebox{-.5\height}{\begin{tikzpicture}[scale=1.5]%
			\def\c{{1,2,3,4,5,6,7,8,9}};
			\pgfmathtruncatemacro{\n}{dim(\c)};
			\pgfmathtruncatemacro{\no}{\n-1};
			\draw (0,0) circle (1cm);
			\foreach \i in {0,...,\no}{
					\pgfmathtruncatemacro{\j}{\c[\i]};
					\node (\j) at ({cos(360*\i/\n)},{sin(360*\i/\n)}){};
					\filldraw[red] (\j) circle (1pt);
					\node (p\j) at ({1.2*cos(360*\i/\n)},{1.2*sin(360*\i/\n)}){\j};
				};
			\draw[red, thick, fill=red!40!white] (1.center) -- (6.center) -- (7.center) -- (1.center);
			\draw[red, thick, fill=red!40!white] (2.center) -- (4.center);
		\end{tikzpicture}} \ar[rddd]_{\bijK_{\co}} \ar[rr]_{\thicken}^{\scalebox{0.5}{\raisebox{-.5\height}{\begin{tikzpicture}[scale=1.5]
				\def\c{{1,2,3,4,5,6,7,8,9}};
				\pgfmathtruncatemacro{\n}{dim(\c)};
				\pgfmathtruncatemacro{\no}{\n-1};
				\draw (0,0) circle (1cm);
				\foreach \i in {0,...,\no}{
						\pgfmathtruncatemacro{\j}{\c[\i]};
						\node (\j) at ({cos(360*(2*\i)/(2*\n))},{sin(360*(2*\i)/(2*\n))}){};
						\filldraw[red] (\j) circle (1pt);
						\node (p\j) at ({1.2*cos(360*(2*\i)/(2*\n))},{1.2*sin(360*(2*\i)/(2*\n))}){\j};
						\node (a\j) at ({cos(360*(2*\i+1)/(2*\n))},{sin(360*(2*\i+1)/(2*\n))}){};
						\filldraw[red] (a\j) circle (1pt);
						\node (ap\j) at ({1.2*cos(360*(2*\i+1)/(2*\n))},{1.2*sin(360*(2*\i+1)/(2*\n))}){$\overline{\j}$};
					};
				\draw[red, thick, fill=red!20!white] (a1.center) -- (6.center) -- (a6.center) -- (7.center) -- (a7.center) -- (1.center) -- (a1.center);
				\draw[red, thick, fill=red!20!white] (a2.center) -- (4.center) -- (a4.center) -- (2.center) -- (a2.center);
				\draw[red, thick, fill=red!20!white] (3.center) -- (a3.center);
				\draw[red, thick, fill=red!20!white] (5.center) -- (a5.center);
				\draw[red, thick, fill=red!20!white] (8.center) -- (a8.center);
				\draw[red, thick, fill=red!20!white] (9.center) -- (a9.center);
			\end{tikzpicture}}}} & &
	\raisebox{-.5\height}{\begin{tikzpicture}[scale=1.5]
			\def\c{{1,2,3,4,5,6,7,8,9}};
			\pgfmathtruncatemacro{\n}{dim(\c)};
			\pgfmathtruncatemacro{\no}{\n-1};
			\draw (0,0) circle (1cm);
			\foreach \i in {0,...,\no}{
					\pgfmathtruncatemacro{\j}{\c[\i]};
					\node (\j) at ({cos(360*(2*\i)/(2*\n))},{sin(360*(2*\i)/(2*\n))}){};
					\filldraw[red] (\j) circle (1pt);
					\node (p\j) at ({1.2*cos(360*(2*\i)/(2*\n))},{1.2*sin(360*(2*\i)/(2*\n))}){\j};
					\node (a\j) at ({cos(360*(2*\i+1)/(2*\n))},{sin(360*(2*\i+1)/(2*\n))}){};
					\filldraw[red] (a\j) circle (1pt);
					\node (ap\j) at ({1.2*cos(360*(2*\i+1)/(2*\n))},{1.2*sin(360*(2*\i+1)/(2*\n))}){$\overline{\j}$};
				};
			\draw[red, thick, fill=red!20!white] (a1.center) -- (6.center);
			\draw[red, thick, fill=red!20!white] (a6.center) -- (7.center);
			\draw[red, thick, fill=red!20!white] (a7.center) -- (1.center);
			\draw[red, thick, fill=red!20!white] (a2.center) -- (4.center);
			\draw[red, thick, fill=red!20!white] (a4.center) -- (2.center);
			\draw[red, thick, fill=red!20!white] (3.center) -- (a3.center);
			\draw[red, thick, fill=red!20!white] (5.center) -- (a5.center);
			\draw[red, thick, fill=red!20!white] (8.center) -- (a8.center);
			\draw[red, thick, fill=red!20!white] (9.center) -- (a9.center);
		\end{tikzpicture}} \ar[lddd]^{\dyck} \\ & & \\ & & \\ &
	\scalebox{.8}{\raisebox{-.5\height}{\begin{tikzpicture}
				\draw[ultra thick] (1.15,.15) -- (3,2) -- (4.5,.5) -- (5,1) -- (5.85,.15);%
				\draw[ultra thick] (6.15,.15) -- (6.5,.5) -- (6.85,.15);
				\draw[ultra thick] (2.15,.15) -- (3,1) -- (3.85,.15);
				\triang{9};
				\draw[thick,red] (.5,0) -- (.9,.4) node[midway,fill=white,inner sep=0pt] {\small $1$};
				\draw[thick,red] (1,.5) -- (1.4,.9) node[midway,fill=white,inner sep=0pt] {$\overline{1}$};
				\draw[thick,red] (1.5,1) -- (1.9,1.4) node[midway,fill=white,inner sep=0pt] {$2$};
				\draw[thick,red] (2,1.5) -- (2.4,1.9) node[midway,fill=white,inner sep=0pt] {$\overline{2}$};
				\draw[thick,red] (2.5,2) -- (2.9,2.4) node[midway,fill=white,inner sep=0pt] {$3$};
				\draw[thick,red] (3,2.5) -- (3.4,2.1) node[midway,fill=white,inner sep=0pt] {$\overline{3}$};
				\draw[thick,red] (3.5,2) -- (3.9,1.6) node[midway,fill=white,inner sep=0pt] {$4$};
				\draw[thick,red] (4,1.5) -- (4.4,1.1) node[midway,fill=white,inner sep=0pt] {$\overline{4}$};
				\draw[thick,red] (4.5,1) -- (4.9,1.4) node[midway,fill=white,inner sep=0pt] {$5$};
				\draw[thick,red] (5,1.5) -- (5.4,1.1) node[midway,fill=white,inner sep=0pt] {$\overline{5}$};
				\draw[thick,red] (5.5,1) -- (5.9,.6) node[midway,fill=white,inner sep=0pt] {$6$};
				\draw[thick,red] (6,.5) -- (6.4,.9) node[midway,fill=white,inner sep=0pt] {$\overline{6}$};
				\draw[thick,red] (6.5,1) -- (6.9,.6) node[midway,fill=white,inner sep=0pt] {$7$};
				\draw[thick,red] (7,.5) -- (7.4,.1) node[midway,fill=white,inner sep=0pt] {$\overline{7}$};
				\draw[thick,red] (7.5,0) -- (7.9,.4) node[midway,fill=white,inner sep=0pt] {$8$};
				\draw[thick,red] (8,.5) -- (8.4,.1) node[midway,fill=white,inner sep=0pt] {$\overline{8}$};
				\draw[thick,red] (8.5,0) -- (8.9,.4) node[midway,fill=white,inner sep=0pt] {$9$};
				\draw[thick,red] (9,.5) -- (9.4,.1) node[midway,fill=white,inner sep=0pt] {$\overline{9}$};
				\draw[thick,red] (1.5,0) -- (1.9,0.4) node[midway,fill=white,inner sep=0pt] {$2$};
				\draw[thick,red] (2,.5) -- (2.4,.9) node[midway,fill=white,inner sep=0pt] {$\overline{2}$};
				\draw[thick,red] (2.5,1) -- (2.9,1.4) node[midway,fill=white,inner sep=0pt] {$3$};
				\draw[thick,red] (3,1.5) -- (3.4,1.1) node[midway,fill=white,inner sep=0pt] {$\overline{3}$};
				\draw[thick,red] (3.5,1) -- (3.9,.6) node[midway,fill=white,inner sep=0pt] {$4$};
				\draw[thick,red] (4,.5) -- (4.4,.1) node[midway,fill=white,inner sep=0pt] {$\overline{4}$};
				\draw[thick,red] (4.5,0) -- (4.9,.4) node[midway,fill=white,inner sep=0pt] {$5$};
				\draw[thick,red] (5,.5) -- (5.4,.1) node[midway,fill=white,inner sep=0pt] {$\overline{5}$};
				\draw[thick,red] (2.5,0) -- (2.9,.4) node[midway,fill=white,inner sep=0pt] {$3$};
				\draw[thick,red] (3,0.5) -- (3.4,.1) node[midway,fill=white,inner sep=0pt] {$\overline{3}$};
			\end{tikzpicture}}} &}}
	\caption{The equivariant bijection $\thicken$ from noncrossing partitions for $\co=(1,2,\ldots,n)$ under the Kreweras complement to noncrossing matchings under clockwise rotation and the equivariant bijection $\dyck$ from noncrossing matchings under clockwise rotation to nonnesting partitions under order ideal promotion.  As there are no charmed roots for $\co$, this recovers our $\co$-charmed bijection $\bijK_{\co}$.}
	\label{fig:known bijections}
\end{figure}

\subsection{Inductive step}
For the inductive step, we show that if $c \xrightarrow{k} c'$ and $\Charm_c$ is an equivariant bijection between $\NC(\S_n,c)$ under $\Krew_c$ and $\NN(\S_n)$ under $\Krow_c$, then the statement also holds for $c'$.

\begin{proposition}
	\label{prop:inductive step}
	Let $c \xrightarrow{k} c'$.  Then $\beta_{c,k} \circ \Charm_c = \Charm_{c'} \circ \alpha_{c,k}$.
\end{proposition}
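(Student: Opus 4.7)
The plan is to leverage the uniqueness of intimate families (\Cref{lem: bij help in-out}) to identify $\beta_{c,k}(\Charm_c(\pi))$ with $\Charm_{c'}(\alpha_{c,k}(\pi))$. Fix $\pi \in \NC(\S_n,c)$ with balanced pair $(O,I)$, and let $\L := \L_{(O,I)}$ be its associated $\heart_c$-intimate family, so $\Charm_c(\pi) = J(\L)$. First I would unpack the noncrossing side: since $\alpha_{c,k}(\pi) = s_k\pi s_k$ and conjugation by $s_k$ swaps $k \leftrightarrow k+1$ throughout the cycle notation of $\pi$, the balanced pair of $\alpha_{c,k}(\pi)$ is $(s_k(O), s_k(I))$, where $s_k$ acts on $[n]$ by the transposition $(k,k+1)$. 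By \Cref{def:bij}, the identity to be proved thus reduces to showing that $\beta_{c,k}(J(\L))$ is the order ideal of some $\heart_{c'}$-intimate family with balanced pair $(s_k(O), s_k(I))$.

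Next I would localize the problem to the ``rectangle'' region of the root poset consisting of roots $(i,j)$ with $\{i,j\} \cap \{k,k+1\} \neq \emptyset$. On the one hand, the toggles in $\beta_{c,k} = \t_{(k,k+1)} \circ \t_{\T_k} \circ \t_{(k,k+1)}$ only alter the membership of roots in $\T_k \cup \{(k,k+1)\}$, which by \eqref{eq:tk} make up the NW-SE diagonal labeled $k$ together with the SW-NE diagonal labeled $k+1$ of $\AR(c)$. On the other hand, by \Cref{prop:c_charmed} the $\heart_c$- and $\heart_{c'}$-charmed decorations agree on roots outside the rectangle, and disagree precisely on roots with $|\{i,j\} \cap \{k,k+1\}| = 1$. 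Consequently, outside the rectangle the family $\L$ already agrees with any candidate $\heart_{c'}$-intimate family for $(s_k(O), s_k(I))$, no toggles of $\beta_{c,k}$ act there, and the problem reduces to a purely local analysis within the rectangle.

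The heart of the argument is a case analysis based on the membership of $k$ and $k+1$ in $O$ and $I$. In each case, I would describe the paths of $\L$ incident to or passing through the integral vertices $k$ and $k+1$, and then construct the corresponding paths of the target family $\widetilde{\L}$ for $(s_k(O), s_k(I))$ with respect to $\heart_{c'}$: the swap $s_k$ exchanges the incoming/outgoing roles of the two integral vertices, while the rearranged charmed decoration forces the kissing/feinting behavior from \Cref{fig: path configurations} to shift between the two affected diagonals. I would then verify step-by-step that applying $\t_{(k,k+1)}$, then $\t_{\T_k}$ in the order prescribed by \eqref{eq:tk}, and then $\t_{(k,k+1)}$ again, realizes exactly this path-level modification at the level of order ideals. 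The main obstacle is the bookkeeping: in each case one must check that each toggle fires legally (i.e., preserves the order-ideal condition) and that the cumulative effect coincides with the target $\heart_{c'}$-intimate family on the two diagonals; once this is verified, \Cref{lem: bij help in-out} closes the proof by uniqueness.
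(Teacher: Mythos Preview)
Your overall strategy matches the paper's: reduce via \Cref{prop:bal_to_nc} to the balanced-pair statement $(O(\pi'),I(\pi'))=(s_k(O),s_k(I))$, localize to the two diagonals $\T_k\cup\sT_k\cup\{(k,k+1)\}$ using \Cref{prop:c_charmed}, and then verify by case analysis that $\beta_{c,k}$ transforms $J(\L)$ into the order ideal of the correct $\heart_{c'}$-intimate family, concluding by the uniqueness in \Cref{lem: bij help in-out}/\Cref{lem: bij help order ideal}.

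The one point where your plan diverges from the paper is the organization of the case analysis. You propose to case on the membership of $k$ and $k+1$ in $O$ and $I$, i.e., on what happens at the integral vertices. The paper instead runs an induction on the number of paths of $\L$ that enter the region $(\T_k\cup\sT_k)\cap\{(i,j)\mid i<j\le k+1\}$, peeling off the topmost path at each step (their Figures~12 and~13). The advantage of the paper's organization is that it cleanly handles configurations with several paths stacked in the rectangle at different heights, none of which need touch the integral vertices $k$ or $k+1$; your casing on $O,I$ membership only constrains the bottom of the picture and would still require a further sub-argument to deal with those higher paths. This is not a gap in the strategy---it is the same local verification---but you should expect that your four $O/I$ cases alone will not exhaust the situations, and that something like the paper's top-down induction (or an equivalent layering argument) will be needed to finish.
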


\begin{proof}
	Fix $\nc \in \NC(\S_n,c)$ and write $\nc'=\alpha_{c,k}(\nc) \in \NC(\S_n,c')$. We will write $(O(\nc),I(\nc))$ and $(O(\nc'),I(\nc'))$ for the corresponding balanced pairs from~\Cref{prop:bal_to_nc}. By definition, we see that $(O(\nc'),I(\nc'))$ is obtained from $(O(\nc),I(\nc))$ by simply interchanging $k$ and $k+1$.  Denote by $\L$ (resp. $\L'$) the intimate family of lattice paths obtained when calculating $\Charm_{c}(\nc)$ (resp. $(\Charm_{c'} \circ \alpha_{c,k})(\nc)$).  By~\Cref{lem: bij help in-out}, $\L'$ is the unique $\heart_{c'}$-intimate family with $\Out(\L')=O(\nc^\prime)$ and $\In(\L')=I(\nc')$.  %
	Denote by $\L^t$ the unique $\heart_{c'}$-intimate family with $J(\L^t) = \beta_{c,k} \circ \Charm_c(\nc)$. Since $\L^\prime$ is by ~\Cref{lem: bij help order ideal} further the unique $\heart_{c'}$-intimate family with $J(\L')= \Charm_{c^\prime} \circ \alpha_{c,k}(\nc)$, it suffices to show that $\Out(\L^t) = O(\nc') = \Out(\L^\prime)$ and $\In(\L^t)=I(\nc')=\In(\L^\prime)$.

	Let $\T_k, \sT_k$ be the sequences of roots defined in~\Cref{eq:tk}. We prove the above assertion by showing that
	\begin{itemize}
		\item $\L$ and $\L^t$ coincide on all roots outside of $\T_k \cup \sT_k$ and that
		\item $\Out(\L^t)\cap \{k,k+1\}$ (resp. $\In(\L^t)\cap \{k,k+1\}$) is obtained from $\Out(\L)\cap \{k,k+1\}$ (resp. $\In(\L)\cap \{k,k+1\}$) by interchanging $k$ and $k+1$.
	\end{itemize}
	Since $\heart_{c}$ and $\heart_{c'}$ coincide on all roots outside of $\T_k \cup \sT_k$ by \Cref{prop:c_charmed}, it suffices by the construction of $\L$ and $\L^t$ as described in \Cref{lem: bij help order ideal} to show that $\L$ and $\L^t$ ``enter'' and ``leave'' the region of roots in $\T_k \cup \sT_k$ at the same roots; by symmetry, it suffices to show this only for the region of roots in $(\T_k \cup \sT_k) \cap \{(i,j) \mid i<j \leq k+1\}$. We show this by induction on the number of paths in $\L$ which include roots in $(\T_k \cup \sT_k) \cap \{(i,j) \mid i<j \leq k+1\}$.%

	Since $(k,k+1)$ is initial in $c$, we have that $k+1 \in \lheart_c$ and $k \in \rheart_c$, so that the roots in $\T_{(k,\bullet)}$ that are charmed form an initial subsequence of $\T_{(k,\bullet)}$.  Similarly, the roots in $\T_{(\bullet,k+1)}$ that are charmed form an initial subsequence of $\T_{(\bullet,k+1)}$. Note that the toggles $\t_t$ for $t \in \T_{(k,\bullet)}$ commute with the toggles $\t_r$ for $r \in \T_{(\bullet,k+1)}$.
	Furthermore for $i<k$, a root $(i,k)$ is $c$-charmed (or $c^\prime$-charmed resp.) if and only if $(i,k+1)$ is not $c$-charmed (or $c^\prime$-charmed resp.).	Using this, we can verify our assertion in the base case: \Cref{fig: induction basis} shows how the restriction of $\L$ to the region $(\T_k \cup \sT_k) \cap \{(i,j) \mid i<j \leq k+1\}$ is mapped to $\L^t$. Note, that it suffices to regard the cases shown in \Cref{fig: induction basis} because we assume that $\L$  is an intimate family of paths; for example the left-most path in  \Cref{fig: induction basis} would not be intimate if there would be an ordinary root below the dashed line.

	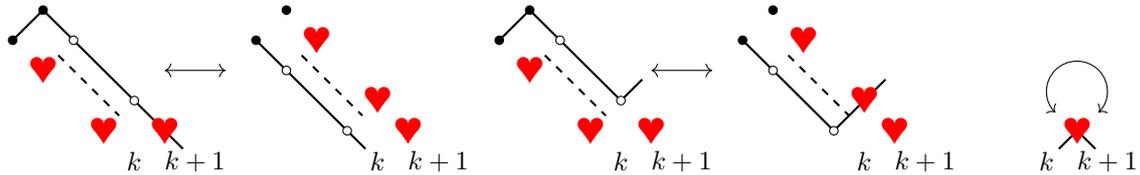
\begin{figure}[!ht]
		\begin{center}
			\begin{tikzpicture}[scale=.8]
				\filldraw (3.5,2.5) circle (2pt);
				\filldraw (3,2) circle (2pt);
				\draw[thick] (3,2) -- (3.5,2.5) -- (5.8,.2);
				\draw[fill=white] (4,2) circle (2pt);
				\draw[fill=white] (5,1) circle (2pt);
				\draw[thick, dashed] (3.75,1.75) -- (4.75,.75);
				\marked{2}{5}
				\marked{4}{5}
				\marked{5}{6}
				\node at (5,0) {$k$};
				\node at (6,0) {$k+1$};
				\draw[<->] (5.5,1.5) -- (6.5,1.5);
				\begin{scope}[xshift=4cm]
					\filldraw (3.5,2.5) circle (2pt);
					\filldraw (3,2) circle (2pt);
					\draw[thick] (3,2) -- (4.8,.2);
					\draw[fill=white] (3.5,1.5) circle (2pt);
					\draw[fill=white] (4.5,.5) circle (2pt);
					\draw[thick, dashed] (3.75,1.75) -- (4.75,.75);
					\marked{2}{6}
					\marked{4}{6}
					\marked{5}{6}
					\node at (5,0) {$k$};
					\node at (6,0) {$k+1$};
				\end{scope}

				\begin{scope}[xshift=8cm]
					\filldraw (3.5,2.5) circle (2pt);
					\filldraw (3,2) circle (2pt);
					\draw[thick] (3,2) -- (3.5,2.5) -- (5,1) -- (5.35,1.35);
					\draw[fill=white] (4,2) circle (2pt);
					\draw[fill=white] (5,1) circle (2pt);
					\draw[thick, dashed] (3.75,1.75) -- (4.75,.75);
					\marked{2}{5}
					\marked{4}{5}
					\marked{5}{6}
					\node at (5,0) {$k$};
					\node at (6,0) {$k+1$};
					\draw[<->] (5.5,1.5) -- (6.5,1.5);
					\begin{scope}[xshift=4cm]
						\filldraw (3.5,2.5) circle (2pt);
						\filldraw (3,2) circle (2pt);
						\draw[thick] (3,2) -- (4.5,.5) -- (5.35,1.35);
						\draw[fill=white] (3.5,1.5) circle (2pt);
						\draw[fill=white] (4.5,.5) circle (2pt);
						\draw[thick, dashed] (3.75,1.75) -- (4.75,.75);
						\marked{2}{6}
						\marked{4}{6}
						\marked{5}{6}
						\node at (5,0) {$k$};
						\node at (6,0) {$k+1$};
					\end{scope}
				\end{scope}

				\begin{scope}[xshift=15cm]
					\draw[thick] (5.2,.2) -- (5.5,.5) -- (5.8,.2);
					\node at (5,0) {$k$};
					\node at (6,0) {$k+1$};
					\marked{5}{6}
					\draw (5.85,.8) arc(315:585:.5)[<->];
				\end{scope}
			\end{tikzpicture}
		\end{center}
		\caption{\label{fig: induction basis}
			We use the following conventions to depict the different cases concisely.  The roots along the dashed lines are all charmed on one side and all ordinary on the other side. The root $(i,k)$ represented by a black dot can be either charmed or ordinary if it is reached by a north-east step. If the root $(i,k)$ is reached by a south-east step then it is charmed if and only if the root $(i-1,k)$ is ordinary.}
	\end{figure}

	For the induction step, we regard first the top path in $\T_k \cup \sT_k \cap \{(i,j) \mid i<j \leq k+1\}$, see \Cref{fig: bot path} for all possible cases. We observe that in all cases, the second path from the top behaves similarly to the top path, i.e., it would be mapped to the same path if we omit the top path.  This shows the claim by the induction hypothesis.\qedhere

	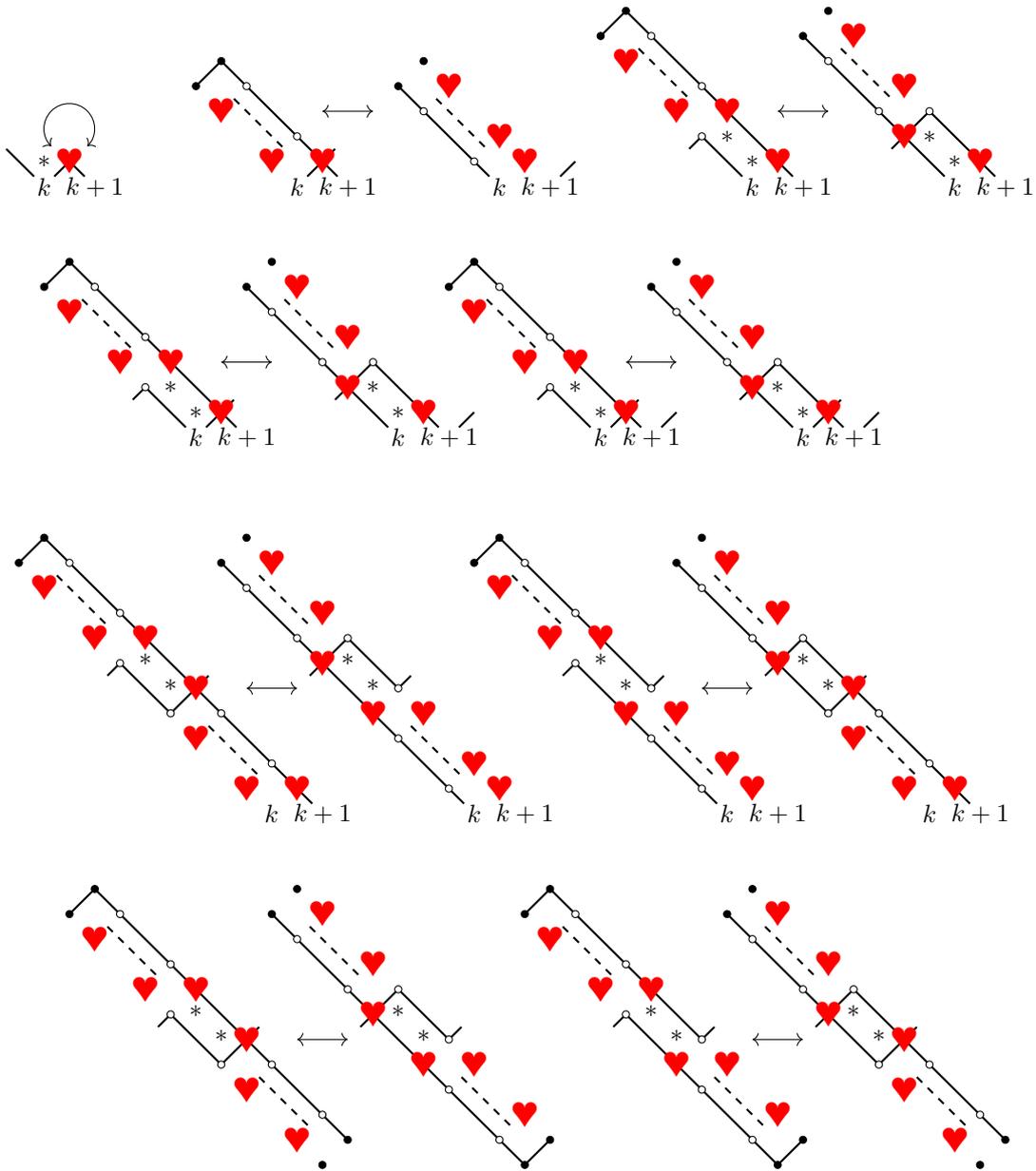
\begin{figure}
		\begin{center}
			\begin{tikzpicture}[scale=.7]

				\draw[thick] (5.2,.2) -- (5.5,.5) -- (5.8,.2);
				\draw[thick] (4.25,.75)  -- (4.8,.2);
				\node at (5,0) {$k$};
				\node at (6,0) {$k+1$};
				\node at (5,.5) {$*$};
				\marked{5}{6}
				\draw (5.85,.8) arc(315:585:.5)[<->];

				\begin{scope}[xshift=5cm]
					\filldraw (3.5,2.5) circle (2pt);
					\filldraw (3,2) circle (2pt);
					\draw[thick] (3,2) -- (3.5,2.5) -- (5.8,.2);
					\draw[thick] (5.2,.2) -- (5.75,.75);
					\draw[fill=white] (4,2) circle (2pt);
					\draw[fill=white] (5,1) circle (2pt);
					\draw[thick, dashed] (3.75,1.75) -- (4.75,.75);
					\marked{2}{5}
					\marked{4}{5}
					\marked{5}{6}
					\node at (5,0) {$k$};
					\node at (6,0) {$k+1$};
					\draw[<->] (5.5,1.5) -- (6.5,1.5);
					\begin{scope}[xshift=4cm]
						\filldraw (3.5,2.5) circle (2pt);
						\filldraw (3,2) circle (2pt);
						\draw[thick] (3,2) -- (4.8,.2);
						\draw[thick] (6.2,.2) -- (6.5,.5);
						\draw[fill=white] (3.5,1.5) circle (2pt);
						\draw[fill=white] (4.5,.5) circle (2pt);
						\draw[thick, dashed] (3.75,1.75) -- (4.75,.75);
						\marked{2}{6}
						\marked{4}{6}
						\marked{5}{6}
						\node at (5,0) {$k$};
						\node at (6,0) {$k+1$};
					\end{scope}
				\end{scope}

				\begin{scope}[xshift=12cm]
					\draw[thick] (4,3) -- (4.5,3.5) -- (7.8,.2);
					\draw[thick] (5.75,.75) -- (6,1) -- (6.8,.2);
					\draw[thick, dashed] (4.75,2.75) -- (5.75,1.75);
					\marked{2}{7}
					\marked{4}{7}
					\marked{5}{8}
					\marked{7}{8}
					\filldraw (4,3) circle (2pt);
					\filldraw (4.5,3.5) circle (2pt);
					\draw[fill=white] (5,3) circle (2pt);
					\draw[fill=white] (6,2) circle (2pt);
					\draw[fill=white] (6,1) circle (2pt);
					\node at (6.5,1) {$*$};
					\node at (7,.5) {$*$};
					\node at (7,0) {$k$};
					\node at (8,0) {$k+1$};
					\draw[<->] (7.5,1.5) -- (8.5,1.5);
					\begin{scope}[xshift=4cm]
						\draw[thick] (4,3) -- (6.8,.2);
						\draw[thick] (5.75,.75)-- (6.5,1.5) -- (7.8,.2);
						\draw[thick, dashed] (4.75,2.75) -- (5.75,1.75);
						\marked{2}{8}
						\marked{4}{8}
						\marked{5}{7}
						\marked{7}{8}
						\filldraw (4,3) circle (2pt);
						\filldraw (4.5,3.5) circle (2pt);
						\draw[fill=white] (4.5,2.5) circle (2pt);
						\draw[fill=white] (5.5,1.5) circle (2pt);
						\draw[fill=white] (6.5,1.5) circle (2pt);
						\node at (6.5,1) {$*$};
						\node at (7,.5) {$*$};
						\node at (7,0) {$k$};
						\node at (8,0) {$k+1$};
					\end{scope}
				\end{scope}

				\begin{scope}[yshift=-5cm, xshift=1cm]

					\draw[thick] (4,3) -- (4.5,3.5) -- (7.8,.2);
					\draw[thick] (5.75,.75) -- (6,1) -- (6.8,.2);
					\draw[thick] (7.2,.2) -- (7.75,.75);
					\draw[thick, dashed] (4.75,2.75) -- (5.75,1.75);
					\marked{2}{7}
					\marked{4}{7}
					\marked{5}{8}
					\marked{7}{8}
					\filldraw (4,3) circle (2pt);
					\filldraw (4.5,3.5) circle (2pt);
					\draw[fill=white] (5,3) circle (2pt);
					\draw[fill=white] (6,2) circle (2pt);
					\draw[fill=white] (6,1) circle (2pt);
					\node at (6.5,1) {$*$};
					\node at (7,.5) {$*$};
					\node at (7,0) {$k$};
					\node at (8,0) {$k+1$};
					\draw[<->] (7.5,1.5) -- (8.5,1.5);
					\begin{scope}[xshift=4cm]
						\draw[thick] (4,3) -- (6.8,.2);
						\draw[thick] (5.75,.75)-- (6.5,1.5) -- (7.8,.2);
						\draw[thick] (8.2,.2) -- (8.5,.5);
						\draw[thick, dashed] (4.75,2.75) -- (5.75,1.75);
						\marked{2}{8}
						\marked{4}{8}
						\marked{5}{7}
						\marked{7}{8}
						\filldraw (4,3) circle (2pt);
						\filldraw (4.5,3.5) circle (2pt);
						\draw[fill=white] (4.5,2.5) circle (2pt);
						\draw[fill=white] (5.5,1.5) circle (2pt);
						\draw[fill=white] (6.5,1.5) circle (2pt);
						\node at (6.5,1) {$*$};
						\node at (7,.5) {$*$};
						\node at (7,0) {$k$};
						\node at (8,0) {$k+1$};
					\end{scope}

					\begin{scope}[xshift=8cm]
						\draw[thick] (4,3) -- (4.5,3.5) -- (7.8,.2);
						\draw[thick] (5.75,.75) -- (6,1) -- (6.8,.2);
						\draw[thick] (7.2,.2) -- (7.75,.75);
						\draw[thick] (8.2,.2) -- (8.5,.5);
						\draw[thick, dashed] (4.75,2.75) -- (5.75,1.75);
						\marked{2}{7}
						\marked{4}{7}
						\marked{5}{8}
						\marked{7}{8}
						\filldraw (4,3) circle (2pt);
						\filldraw (4.5,3.5) circle (2pt);
						\draw[fill=white] (5,3) circle (2pt);
						\draw[fill=white] (6,2) circle (2pt);
						\draw[fill=white] (6,1) circle (2pt);
						\node at (6.5,1) {$*$};
						\node at (7,.5) {$*$};
						\node at (7,0) {$k$};
						\node at (8,0) {$k+1$};
						\draw[<->] (7.5,1.5) -- (8.5,1.5);
						\begin{scope}[xshift=4cm]
							\draw[thick] (4,3) -- (6.8,.2);
							\draw[thick] (5.75,.75)-- (6.5,1.5) -- (7.8,.2);
							\draw[thick] (7.2,.2) -- (7.75,.75);
							\draw[thick] (8.2,.2) -- (8.5,.5);
							\draw[thick, dashed] (4.75,2.75) -- (5.75,1.75);
							\marked{2}{8}
							\marked{4}{8}
							\marked{5}{7}
							\marked{7}{8}
							\filldraw (4,3) circle (2pt);
							\filldraw (4.5,3.5) circle (2pt);
							\draw[fill=white] (4.5,2.5) circle (2pt);
							\draw[fill=white] (5.5,1.5) circle (2pt);
							\draw[fill=white] (6.5,1.5) circle (2pt);
							\node at (6.5,1) {$*$};
							\node at (7,.5) {$*$};
							\node at (7,0) {$k$};
							\node at (8,0) {$k+1$};
						\end{scope}
					\end{scope}
				\end{scope}

				\begin{scope}[yshift=-13cm, xshift=-2cm]
					\draw[thick] (6.5,5.5) -- (7,6) -- (12.3,.7);
					\draw[thick] (8.25,3.25) -- (8.5,3.5) -- (9.5,2.5) -- (10.25,3.25);
					\draw[thick, dashed] (7.25,5.25) -- (8.25,4.25);
					\draw[thick, dashed] (10.25,2.25) -- (11.25,1.25);
					\node at (9,3.5) {*};
					\node at (9.5,3) {*};
					\filldraw (6.5,5.5) circle (2pt);
					\filldraw (7,6) circle (2pt);
					\marked{2}{12}
					\marked{4}{12}
					\marked{5}{13}
					\marked{7}{13}
					\marked{8}{12}
					\marked{10}{12}
					\marked{11}{13}
					\draw[fill=white] (7.5,5.5) circle (2pt);
					\draw[fill=white] (8.5,4.5) circle (2pt);
					\draw[fill=white] (8.5,3.5) circle (2pt);
					\draw[fill=white] (9.5,2.5) circle (2pt);
					\draw[fill=white] (10.5,2.5) circle (2pt);
					\draw[fill=white] (11.5,1.5) circle (2pt);
					\draw[<->] (11,3) -- (12,3);
					\node at (11.5,.5) {$k$};
					\node at (12.5,.5) {$k+1$};

					\begin{scope}[xshift=4cm]
						\draw[thick] (6.5,5.5) -- (11.3,.7);
						\draw[thick] (8.25,3.25) -- (9,4) -- (10,3) -- (10.25,3.25);
						\draw[thick, dashed] (7.25,5.25) -- (8.25,4.25);
						\draw[thick, dashed] (10.25,2.25) -- (11.25,1.25);
						\node at (9,3.5) {*};
						\node at (9.5,3) {*};
						\filldraw (6.5,5.5) circle (2pt);
						\filldraw (7,6) circle (2pt);
						\marked{2}{13}
						\marked{4}{13}
						\marked{5}{12}
						\marked{7}{12}
						\marked{8}{13}
						\marked{10}{13}
						\marked{11}{13}
						\draw[fill=white] (7,5) circle (2pt);
						\draw[fill=white] (8,4) circle (2pt);
						\draw[fill=white] (9,4) circle (2pt);
						\draw[fill=white] (10,3) circle (2pt);
						\draw[fill=white] (10,2) circle (2pt);
						\draw[fill=white] (11,1) circle (2pt);
						\node at (11.5,.5) {$k$};
						\node at (12.5,.5) {$k+1$};
					\end{scope}

					\begin{scope}[xshift=9cm]
						\draw[thick] (6.5,5.5) -- (7,6) -- (10,3) -- (10.25,3.25);
						\draw[thick] (8.25,3.25) -- (8.5,3.5) -- (11.3,.7);
						\draw[thick, dashed] (7.25,5.25) -- (8.25,4.25);
						\draw[thick, dashed] (10.25,2.25) -- (11.25,1.25);
						\node at (9,3.5) {*};
						\node at (9.5,3) {*};
						\filldraw (6.5,5.5) circle (2pt);
						\filldraw (7,6) circle (2pt);
						\marked{2}{12}
						\marked{4}{12}
						\marked{5}{13}
						\marked{7}{12}
						\marked{8}{13}
						\marked{10}{13}
						\marked{11}{13}
						\draw[fill=white] (7.5,5.5) circle (2pt);
						\draw[fill=white] (8.5,4.5) circle (2pt);
						\draw[fill=white] (8.5,3.5) circle (2pt);
						\draw[fill=white] (10,3) circle (2pt);
						\draw[fill=white] (10,2) circle (2pt);
						\draw[fill=white] (11,1) circle (2pt);
						\draw[<->] (11,3) -- (12,3);
						\node at (11.5,.5) {$k$};
						\node at (12.5,.5) {$k+1$};

						\begin{scope}[xshift=4cm]
							\draw[thick] (6.5,5.5) --  (9.5,2.5) -- (10.25,3.25);
							\draw[thick] (8.25,3.25) -- (9,4) -- (12.3,.7);
							\draw[thick, dashed] (7.25,5.25) -- (8.25,4.25);
							\draw[thick, dashed] (10.25,2.25) -- (11.25,1.25);
							\node at (9,3.5) {*};
							\node at (9.5,3) {*};
							\filldraw (6.5,5.5) circle (2pt);
							\filldraw (7,6) circle (2pt);
							\marked{2}{13}
							\marked{4}{13}
							\marked{5}{12}
							\marked{7}{13}
							\marked{8}{12}
							\marked{10}{12}
							\marked{11}{13}
							\draw[fill=white] (7,5) circle (2pt);
							\draw[fill=white] (8,4) circle (2pt);
							\draw[fill=white] (9,4) circle (2pt);
							\draw[fill=white] (9.5,2.5) circle (2pt);
							\draw[fill=white] (10.5,2.5) circle (2pt);
							\draw[fill=white] (11.5,1.5) circle (2pt);
							\node at (11.5,.5) {$k$};
							\node at (12.5,.5) {$k+1$};
						\end{scope}
					\end{scope}
				\end{scope}

				\begin{scope}[yshift=-20cm, xshift=-1cm]
					\draw[thick] (6.5,5.5) -- (7,6) -- (10,3) -- (10.25,3.25);
					\draw[thick] (8.25,3.25) -- (8.5,3.5)  -- (9.5,2.5) -- (10,3) -- (12,1);
					\draw[thick, dashed] (7.25,5.25) -- (8.25,4.25);
					\draw[thick, dashed] (10.25,2.25) -- (11.25,1.25);
					\node at (9,3.5) {*};
					\node at (9.5,3) {*};
					\filldraw (6.5,5.5) circle (2pt);
					\filldraw (7,6) circle (2pt);
					\marked{2}{12}
					\marked{4}{12}
					\marked{5}{13}
					\marked{7}{13}
					\marked{8}{12}
					\marked{10}{12}
					\draw[fill=white] (7.5,5.5) circle (2pt);
					\draw[fill=white] (8.5,4.5) circle (2pt);
					\draw[fill=white] (8.5,3.5) circle (2pt);
					\draw[fill=white] (9.5,2.5) circle (2pt);
					\draw[fill=white] (10.5,2.5) circle (2pt);
					\draw[fill=white] (11.5,1.5) circle (2pt);
					\filldraw (11.5,.5) circle (2pt);
					\filldraw (12,1) circle (2pt);
					\draw[<->] (11,3) -- (12,3);

					\begin{scope}[xshift=4cm]
						\draw[thick] (6.5,5.5) -- (8.5,3.5) -- (9,4) -- (10,3) -- (10.25,3.25);
						\draw[thick] (8.25,3.25) -- (8.5,3.5)  --  (11.5,.5) -- (12,1);
						\draw[thick, dashed] (7.25,5.25) -- (8.25,4.25);
						\draw[thick, dashed] (10.25,2.25) -- (11.25,1.25);
						\node at (9,3.5) {*};
						\node at (9.5,3) {*};
						\filldraw (6.5,5.5) circle (2pt);
						\filldraw (7,6) circle (2pt);
						\marked{2}{13}
						\marked{4}{13}
						\marked{5}{12}
						\marked{7}{12}
						\marked{8}{13}
						\marked{10}{13}
						\draw[fill=white] (7,5) circle (2pt);
						\draw[fill=white] (8,4) circle (2pt);
						\draw[fill=white] (9,4) circle (2pt);
						\draw[fill=white] (10,3) circle (2pt);
						\draw[fill=white] (10,2) circle (2pt);
						\draw[fill=white] (11,1) circle (2pt);
						\filldraw (11.5,.5) circle (2pt);
						\filldraw (12,1) circle (2pt);
					\end{scope}

					\begin{scope}[xshift=9cm]
						\draw[thick] (6.5,5.5) -- (7,6) -- (10,3) -- (10.25,3.25);
						\draw[thick] (8.25,3.25) -- (8.5,3.5)  -- (11.5,.5) -- (12,1);
						\draw[thick, dashed] (7.25,5.25) -- (8.25,4.25);
						\draw[thick, dashed] (10.25,2.25) -- (11.25,1.25);
						\node at (9,3.5) {*};
						\node at (9.5,3) {*};
						\filldraw (6.5,5.5) circle (2pt);
						\filldraw (7,6) circle (2pt);
						\marked{2}{12}
						\marked{4}{12}
						\marked{5}{13}
						\marked{7}{12}
						\marked{8}{13}
						\marked{10}{13}
						\draw[fill=white] (7.5,5.5) circle (2pt);
						\draw[fill=white] (8.5,4.5) circle (2pt);
						\draw[fill=white] (8.5,3.5) circle (2pt);
						\draw[fill=white] (10,3) circle (2pt);
						\draw[fill=white] (10,2) circle (2pt);
						\draw[fill=white] (11,1) circle (2pt);
						\filldraw (11.5,.5) circle (2pt);
						\filldraw (12,1) circle (2pt);
						\draw[<->] (11,3) -- (12,3);

						\begin{scope}[xshift=4cm]
							\draw[thick] (6.5,5.5) -- (8.5,3.5) -- (9,4) -- (10,3) -- (10.25,3.25);
							\draw[thick] (8.25,3.25) -- (8.5,3.5)  -- (9.5,2.5) -- (10,3) -- (12,1);
							\draw[thick, dashed] (7.25,5.25) -- (8.25,4.25);
							\draw[thick, dashed] (10.25,2.25) -- (11.25,1.25);
							\node at (9,3.5) {*};
							\node at (9.5,3) {*};
							\filldraw (6.5,5.5) circle (2pt);
							\filldraw (7,6) circle (2pt);
							\marked{2}{13}
							\marked{4}{13}
							\marked{5}{12}
							\marked{7}{13}
							\marked{8}{12}
							\marked{10}{12}
							\draw[fill=white] (7,5) circle (2pt);
							\draw[fill=white] (8,4) circle (2pt);
							\draw[fill=white] (9,4) circle (2pt);
							\draw[fill=white] (9.5,2.5) circle (2pt);
							\draw[fill=white] (10.5,2.5) circle (2pt);
							\draw[fill=white] (11.5,1.5) circle (2pt);
							\filldraw (11.5,.5) circle (2pt);
							\filldraw (12,1) circle (2pt);
						\end{scope}
					\end{scope}

				\end{scope}
			\end{tikzpicture}
		\end{center}
		\caption{\label{fig: bot path} All possible configurations of the topmost path(s) in $\mathcal{L}$ together with their counterparts in $\mathcal{L}^t$. We continue the same conventions as in \Cref{fig: induction basis}. Further, the roots in the regions indicated by $*$s are charmed or ordinary arbitrarily. The root $(i,k+1)$ represented by a black dot at the bottom of the figure can be charmed or ordinary if the path leaves it by a north-east step. If the path leaves $(i,k+1)$ by a south-east step, then $(i,k+1)$ is charmed if and only if $(i+1,k+1)$ is ordinary.}
	\end{figure}
\end{proof}

We can give an alternative description for reading the blocks of the noncrossing partition from the corresponding intimate family by following the above proof.

\begin{corollary}
	\label{cor:tails}
	Let $\L$ be an $\heart_c$-intimate family and $\nc \in \NC(\S_n,c)$ the corresponding noncrossing partition with $(\Out(\L),\In(\L))=(O(\nc),I(\nc))$. The blocks of $\nc$ consist of the integers which are connected by paths in $\L$ after reinterpreting each kiss between a pair of paths in $\L$ at a charmed root as a crossing.
\end{corollary}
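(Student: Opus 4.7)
The plan is to prove \Cref{cor:tails} by Cambrian induction on the Coxeter element $c$, in parallel with the proof of \Cref{thm:main_theorem}: first establish the statement for the linear Coxeter element $\co$ as the base case, and then propagate via $c \xrightarrow{k} c'$ using \Cref{prop:inductive step} and \Cref{lem:initial_conj}.

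For the base case $c = \co$, the definitions of $\lheart_{\co}$ and $\rheart_{\co}$ force $\rheart_{\co}$ to be empty, so every interior root is $\co$-ordinary and no kiss in $\L$ occurs at a charmed root; the ``reinterpret as crossings'' step is therefore vacuous. It suffices to show that the blocks of $\nc$ correspond to connected components of paths joined only at their integer endpoints on the $x$-axis. Tracing the parenthesis-matching procedure of \Cref{lem: bij help in-out} on a noncrossing partition for the linear Coxeter element, a block $\{b_1 < b_2 < \ldots < b_m\}$ of $\nc$ produces the chain of paths $b_1 \to b_2,\ b_2 \to b_3,\ \ldots,\ b_{m-1} \to b_m$ (since consecutive $b_i$'s give rise to adjacent $(_{b_i}\,)_{b_{i+1}}$ pairs in the parenthesization), and paths from distinct blocks share no integer endpoint.

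For the inductive step, fix $c \xrightarrow{k} c'$, pick $\nc' \in \NC(\S_n, c')$, and set $\nc = s_k \nc' s_k \in \NC(\S_n, c)$, $\L = \Charm_c(\nc)$, and $\L' = \Charm_{c'}(\nc')$. The blocks of $\nc'$ are those of $\nc$ with the labels $k$ and $k+1$ interchanged. Outside the rectangle of roots bounded by the $k$- and $(k+1)$-diagonals of $\AR(c)$ depicted in \Cref{fig: AR arround k}, the families $\L$ and $\L'$ coincide and the charmed status of roots is identical by \Cref{prop:c_charmed}; inside this rectangle the transformation $\L \to \L'$ is described by the case analysis of \Cref{fig: induction basis} and \Cref{fig: bot path}. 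The main technical obstacle is a case-by-case verification, drawn directly from those figures, that the paths-with-charmed-crossings interpretation produces the same block for every integer $x \notin \{k, k+1\}$ in both $\L$ and $\L'$ and correctly interchanges $k$ and $k+1$; the most delicate configurations are the lower rows of \Cref{fig: bot path} where several paths traverse the rectangle, and these are handled by an auxiliary induction on the number of such paths, peeling off the topmost path as in the proof of \Cref{prop:inductive step}.
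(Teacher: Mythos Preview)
Your proposal is correct and follows essentially the same Cambrian-induction strategy as the paper's proof: verify the claim for the linear Coxeter element (where there are no charmed roots and the paths match blocks via the parenthesis construction of \Cref{lem: bij help in-out}), then propagate along $c \xrightarrow{k} c'$ by checking that connectivity-after-reinterpreting-kisses transforms under $s_k$ exactly as the blocks do, reducing to the local pictures in \Cref{fig: induction basis} and \Cref{fig: bot path}. One small imprecision: the region where $\L$ and $\L'$ may differ is $\T_k \cup \sT_k$ inside the root poset (two diagonal strips meeting at $(k,k+1)$), not the rectangle in $\AR(c)$ from \Cref{fig: AR arround k}; the latter lives in the Auslander--Reiten quiver and is only used to organize the toggle sequence.
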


\begin{proof}
	We follow analogous steps as in the above proof.
	For the linear Coxeter element $\co = s_1s_2 \cdots s_{n-1}$, the paths in an $\heart_{c_1}$-intimate family are nonintersecting and by comparing with \Cref{prop:bal_to_nc} we see that the assertion is true. Next we show the general case by Cambrian induction. For $c \xrightarrow{k} c^\prime$ let $\L^\prime$ be the $\heart_{c^\prime}$-intimate family as defined in the above proof. Then it suffices to show that two integers $i,j$ are connected in $\L$ after reinterpreting kissing paths as crossing paths if and only if $s_k(i),s_k(j)$ are connected in $\L^\prime$ after reinterpreting kissing paths as crossing paths. By the proof of \Cref{prop:inductive step} it suffices to check the cases shown in \Cref{fig: induction basis} and \Cref{fig: bot path} which imply the assertion.
\end{proof}

\begin{theorem}
	\label{thm:charm}
	For all Coxeter elements $c$, the bijection $\Charm_c$ is the unique support-preserving bijection between $\NC(\S_n,c)$ and $\NN(\S_n)$ satisfying
	$\Krow_{c} \circ \; \Charm_{c} = \Charm_{c} \circ \Krew_{c}$.
\end{theorem}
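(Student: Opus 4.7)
The plan is \emph{Cambrian induction} in the sense of \Cref{sec:cambrian_induction}: prove the theorem for the linear Coxeter element $\co = s_1 s_2 \cdots s_{n-1}$, and use \Cref{lem:initial_conj} to transport it to every Coxeter element via conjugations by initial simple reflections. When $c = \co$ has cycle notation $(1, 2, \ldots, n)$, every entry lies in the initial increasing run, so no root is $c$-charmed and $\Charm_\co$ collapses to the kissing-free Dyck-path model. It then coincides with the classical composition $\dyck \circ \thicken$ of \Cref{fig:known bijections}, and by \Cref{rem: promotion} the operator $\Krow_\co$ is precisely order ideal promotion; classical results then furnish both the equivariance and the support preservation needed at the base case.

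For the inductive step, assume the theorem holds for $c$ and that $c \xrightarrow{k} c'$. Chasing the commutative cube of \Cref{fig:outline}, I combine the three commuting-face results already proved:
\begin{align*}
\Charm_{c'} \circ \Krew_{c'} \circ \alpha_{c,k}
&= \Charm_{c'} \circ \alpha_{c,k} \circ \Krew_c && (\Cref{prop:nc_induction}) \\
&= \beta_{c,k} \circ \Charm_c \circ \Krew_c && (\Cref{prop:inductive step}) \\
&= \beta_{c,k} \circ \Krow_c \circ \Charm_c && (\text{inductive hypothesis}) \\
&= \Krow_{c'} \circ \beta_{c,k} \circ \Charm_c && (\Cref{prop:nn_induction}) \\
&= \Krow_{c'} \circ \Charm_{c'} \circ \alpha_{c,k} && (\Cref{prop:inductive step}).
\end{align*}
Cancelling the bijection $\alpha_{c,k}$ on the right yields equivariance for $c'$. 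Support preservation is inherited along the way: the local case analyses of \Cref{fig: induction basis} and \Cref{fig: bot path} describe precisely how the intimate family changes in passing from $c$ to $c'$, and a direct check shows that the resulting change in the set of adjacent-index blocks matches the conjugation behavior of $\Supp(\alpha_{c,k}(\nc))$ relative to $\Supp(\nc)$.

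The main obstacle will be \emph{uniqueness}. After composing any competing bijection $\phi$ with $\Charm_c^{-1}$, the question reduces to showing that every $\Krew_c$-equivariant, support-preserving permutation of $\NC(\S_n,c)$ is the identity. Equivalently, I must check (i) that within each $\Krew_c$-orbit the sequence of supports has full minimal period, and (ii) that no two distinct orbits realize the same support sequence up to cyclic rotation. My plan is again Cambrian induction: at the base case both conditions can be verified using the classical description of promotion orbits on $2 \times n$ standard Young tableaux, and then the inductive transfer relies on \Cref{prop:c_charmed} together with \Cref{prop:inductive step} to argue that the reshuffling of supports induced by $\alpha_{c,k}$ is sufficiently fine-grained near the indices $k, k+1$ that rigidity at level $c$ forces rigidity at level $c'$. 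Making this ``support-sensitivity'' argument precise is where I expect the real technical work to lie.
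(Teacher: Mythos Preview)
Your equivariance argument matches the paper's: the same cube-chase through \Cref{prop:nc_induction}, \Cref{prop:inductive step}, \Cref{prop:nn_induction}, with the linear Coxeter element as base case.  (Support preservation, incidentally, does not need the case analysis of \Cref{fig: induction basis} and \Cref{fig: bot path}: it is immediate from the $(O,I)$ description, since $s_i\in\Supp(\nc)$ iff some block of $\nc$ straddles $i$, iff $|O\cap[i]|>|I\cap[i]|$, iff the simple root $(i,i+1)$ lies in $J_{\heart_c}(O,I)$.)

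For uniqueness, however, the paper does \emph{not} use Cambrian induction.  It proves directly, for every $c$ at once, that the map
\[
\orbsupp(w)\;=\;\bigl(\Supp(w),\Supp(\Kr_c(w)),\ldots,\Supp(\Kr_c^{2n-1}(w))\bigr)
\]
is injective on $\NC(\S_n,c)$.  The key idea is elementary: if $i$ and $j$ lie in different blocks of $w$, coarsen $w$ to a two-block noncrossing partition $w'$ still separating $i$ from $j$; some rotation $c^{-t}w'c^{t}$ then has blocks $\{1,\ldots,k\}$ and $\{k+1,\ldots,n\}$, so $s_k\notin\Supp(c^{-t}wc^{t})$.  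Thus $\orbsupp(w)$ detects, for every pair $i,j$, whether they share a block---and hence determines $w$.  Injectivity of $\orbsupp$ immediately gives your conditions (i) and (ii) simultaneously, with no case analysis and no induction.

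Your proposed inductive route to uniqueness is not only more laborious but has a genuine obstacle you have not addressed: the conjugation $\alpha_{c,k}(\nc)=s_k\nc s_k$ does \emph{not} act on supports by a simple permutation (e.g.\ $\Supp(s_k s_{k-1} s_k)=\{s_{k-1},s_k\}\neq\Supp(s_{k-1})$), so there is no evident way to transport ``support rigidity'' from $c$ to $c'$.  The paper's direct argument sidesteps this entirely.
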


\newsavebox\cubea\sbox\cubea{
	\raisebox{-.5\height}{\scalebox{0.3}{
			\begin{tikzpicture}
				\draw[red, fill=red!40!white,opacity=.3] (1,1) -- (1,2) -- (2,2) -- (2,1);
				\node (00) at (0,0) {$\bullet$};
				\node (30) at (3,0) {$\bullet$};
				\node (11) at (1,1) {$\bullet$};
				\node (21) at (2,1) {$\bullet$};
				\node (03) at (0,3) {$\bullet$};
				\node (33) at (3,3) {$\bullet$};
				\node (12) at (1,2) {$\bullet$};
				\node (22) at (2,2) {$\bullet$};
				\draw (00) -- (30);
				\draw (11) -- (21);
				\draw (12) -- (22);
				\draw (03) -- (33);
				\draw (03) -- (00);
				\draw (12) -- (03);
				\draw (22) -- (33);
				\draw (11) -- (00);
				\draw[->, ultra thick, red] (21) -- (30);
				\draw (12) -- (11);
				\draw (33) -- (30);
				\draw[->, ultra thick, red] (22) -- (21);
			\end{tikzpicture}}}}

\newsavebox\cubeaa\sbox\cubeaa{
	\raisebox{-.5\height}{\scalebox{0.3}{
			\begin{tikzpicture}
				\node (00) at (0,0) {$\bullet$};
				\node (30) at (3,0) {$\bullet$};
				\node (11) at (1,1) {$\bullet$};
				\node (21) at (2,1) {$\bullet$};
				\node (03) at (0,3) {$\bullet$};
				\node (33) at (3,3) {$\bullet$};
				\node (12) at (1,2) {$\bullet$};
				\node (22) at (2,2) {$\bullet$};
				\draw (00) -- (30);
				\draw (11) -- (21);
				\draw (12) -- (22);
				\draw (03) -- (33);
				\draw (03) -- (00);
				\draw (12) -- (03);
				\draw (22) -- (33);
				\draw (11) -- (00);
				\draw[->, ultra thick, red] (21) -- (30);
				\draw (12) -- (11);
				\draw (33) -- (30);
				\draw[->, ultra thick, red] (22) -- (21);
			\end{tikzpicture}}}}
\newsavebox\cubeb\sbox\cubeb{\raisebox{-.5\height}{
		\scalebox{0.3}{
			\begin{tikzpicture}
				\draw[red, fill=red!40!white,opacity=.3] (0,0) -- (1,1) -- (2,1) -- (3,0) -- (0,0);
				\node (00) at (0,0) {$\bullet$};
				\node (30) at (3,0) {$\bullet$};
				\node (11) at (1,1) {$\bullet$};
				\node (21) at (2,1) {$\bullet$};
				\node (03) at (0,3) {$\bullet$};
				\node (33) at (3,3) {$\bullet$};
				\node (12) at (1,2) {$\bullet$};
				\node (22) at (2,2) {$\bullet$};
				\draw (00) -- (30);
				\draw[->, ultra thick, red] (11) -- (21);
				\draw[->, ultra thick, red] (22) -- (12);
				\draw (03) -- (33);
				\draw (03) -- (00);
				\draw (12) -- (03);
				\draw (22) -- (33);
				\draw (11) -- (00);
				\draw[->, ultra thick, red] (21) -- (30);
				\draw[->, ultra thick, red] (12) -- (11);
				\draw (33) -- (30);
				\draw (22) -- (21);
			\end{tikzpicture}}}}
\newsavebox\cubebb\sbox\cubebb{\raisebox{-.5\height}{
		\scalebox{0.3}{
			\begin{tikzpicture}
				\node (00) at (0,0) {$\bullet$};
				\node (30) at (3,0) {$\bullet$};
				\node (11) at (1,1) {$\bullet$};
				\node (21) at (2,1) {$\bullet$};
				\node (03) at (0,3) {$\bullet$};
				\node (33) at (3,3) {$\bullet$};
				\node (12) at (1,2) {$\bullet$};
				\node (22) at (2,2) {$\bullet$};
				\draw (00) -- (30);
				\draw[->, ultra thick, red] (11) -- (21);
				\draw[->, ultra thick, red] (22) -- (12);
				\draw (03) -- (33);
				\draw (03) -- (00);
				\draw (12) -- (03);
				\draw (22) -- (33);
				\draw (11) -- (00);
				\draw[->, ultra thick, red] (21) -- (30);
				\draw[->, ultra thick, red] (12) -- (11);
				\draw (33) -- (30);
				\draw (22) -- (21);
			\end{tikzpicture}}}}
\newsavebox\cubec\sbox\cubec{\raisebox{-.5\height}{
		\scalebox{0.3}{
			\begin{tikzpicture}
				\draw[red, fill=red!40!white,opacity=.3] (0,0) -- (0,3) -- (1,2) -- (1,1) -- (0,0);
				\node (00) at (0,0) {$\bullet$};
				\node (30) at (3,0) {$\bullet$};
				\node (11) at (1,1) {$\bullet$};
				\node (21) at (2,1) {$\bullet$};
				\node (03) at (0,3) {$\bullet$};
				\node (33) at (3,3) {$\bullet$};
				\node (12) at (1,2) {$\bullet$};
				\node (22) at (2,2) {$\bullet$};
				\draw[->, ultra thick, red] (00) -- (30);
				\draw (11) -- (21);
				\draw[->, ultra thick, red] (22) -- (12);
				\draw (03) -- (33);
				\draw (03) -- (00);
				\draw (12) -- (03);
				\draw (22) -- (33);
				\draw[->, ultra thick, red] (11) -- (00);
				\draw (21) -- (30);
				\draw[->, ultra thick, red] (12) -- (11);
				\draw (33) -- (30);
				\draw (22) -- (21);
			\end{tikzpicture}}}}
\newsavebox\cubecc\sbox\cubecc{\raisebox{-.5\height}{
		\scalebox{0.3}{
			\begin{tikzpicture}
				\node (00) at (0,0) {$\bullet$};
				\node (30) at (3,0) {$\bullet$};
				\node (11) at (1,1) {$\bullet$};
				\node (21) at (2,1) {$\bullet$};
				\node (03) at (0,3) {$\bullet$};
				\node (33) at (3,3) {$\bullet$};
				\node (12) at (1,2) {$\bullet$};
				\node (22) at (2,2) {$\bullet$};
				\draw[->, ultra thick, red] (00) -- (30);
				\draw (11) -- (21);
				\draw[->, ultra thick, red] (22) -- (12);
				\draw (03) -- (33);
				\draw (03) -- (00);
				\draw (12) -- (03);
				\draw (22) -- (33);
				\draw[->, ultra thick, red] (11) -- (00);
				\draw (21) -- (30);
				\draw[->, ultra thick, red] (12) -- (11);
				\draw (33) -- (30);
				\draw (22) -- (21);
			\end{tikzpicture}}}}
\newsavebox\cubed\sbox\cubed{\raisebox{-.5\height}{
		\scalebox{0.3}{
			\begin{tikzpicture}
				\draw[red, fill=red!40!white,opacity=.3] (0,0) -- (0,3) -- (3,3) -- (3,0);
				\node (00) at (0,0) {$\bullet$};
				\node (30) at (3,0) {$\bullet$};
				\node (11) at (1,1) {$\bullet$};
				\node (21) at (2,1) {$\bullet$};
				\node (03) at (0,3) {$\bullet$};
				\node (33) at (3,3) {$\bullet$};
				\node (12) at (1,2) {$\bullet$};
				\node (22) at (2,2) {$\bullet$};
				\draw[->, ultra thick, red] (00) -- (30);
				\draw (11) -- (21);
				\draw[->, ultra thick, red] (22) -- (12);
				\draw (03) -- (33);
				\draw[->, ultra thick, red] (03) -- (00);
				\draw[->, ultra thick, red] (12) -- (03);
				\draw (22) -- (33);
				\draw (11) -- (00);
				\draw (21) -- (30);
				\draw (12) -- (11);
				\draw (33) -- (30);
				\draw (22) -- (21);
			\end{tikzpicture}}}}
\newsavebox\cubedd\sbox\cubedd{\raisebox{-.5\height}{
		\scalebox{0.3}{
			\begin{tikzpicture}
				\node (00) at (0,0) {$\bullet$};
				\node (30) at (3,0) {$\bullet$};
				\node (11) at (1,1) {$\bullet$};
				\node (21) at (2,1) {$\bullet$};
				\node (03) at (0,3) {$\bullet$};
				\node (33) at (3,3) {$\bullet$};
				\node (12) at (1,2) {$\bullet$};
				\node (22) at (2,2) {$\bullet$};
				\draw[->, ultra thick, red] (00) -- (30);
				\draw (11) -- (21);
				\draw[->, ultra thick, red] (22) -- (12);
				\draw (03) -- (33);
				\draw[->, ultra thick, red] (03) -- (00);
				\draw[->, ultra thick, red] (12) -- (03);
				\draw (22) -- (33);
				\draw (11) -- (00);
				\draw (21) -- (30);
				\draw (12) -- (11);
				\draw (33) -- (30);
				\draw (22) -- (21);
			\end{tikzpicture}}}}
\newsavebox\cubee\sbox\cubee{\raisebox{-.5\height}{
		\scalebox{0.3}{
			\begin{tikzpicture}
				\draw[red, fill=red!40!white,opacity=.3] (1,2) -- (0,3) -- (3,3) -- (2,2) -- (1,2);
				\node (00) at (0,0) {$\bullet$};
				\node (30) at (3,0) {$\bullet$};
				\node (11) at (1,1) {$\bullet$};
				\node (21) at (2,1) {$\bullet$};
				\node (03) at (0,3) {$\bullet$};
				\node (33) at (3,3) {$\bullet$};
				\node (12) at (1,2) {$\bullet$};
				\node (22) at (2,2) {$\bullet$};
				\draw (00) -- (30);
				\draw (11) -- (21);
				\draw[->, ultra thick, red] (22) -- (12);
				\draw[->, ultra thick, red] (03) -- (33);
				\draw (03) -- (00);
				\draw[->, ultra thick, red] (12) -- (03);
				\draw (22) -- (33);
				\draw (11) -- (00);
				\draw (21) -- (30);
				\draw (12) -- (11);
				\draw[->, ultra thick, red] (33) -- (30);
				\draw (22) -- (21);
			\end{tikzpicture}}}}
\newsavebox\cubeee\sbox\cubeee{\raisebox{-.5\height}{
		\scalebox{0.3}{
			\begin{tikzpicture}
				\node (00) at (0,0) {$\bullet$};
				\node (30) at (3,0) {$\bullet$};
				\node (11) at (1,1) {$\bullet$};
				\node (21) at (2,1) {$\bullet$};
				\node (03) at (0,3) {$\bullet$};
				\node (33) at (3,3) {$\bullet$};
				\node (12) at (1,2) {$\bullet$};
				\node (22) at (2,2) {$\bullet$};
				\draw (00) -- (30);
				\draw (11) -- (21);
				\draw[->, ultra thick, red] (22) -- (12);
				\draw[->, ultra thick, red] (03) -- (33);
				\draw (03) -- (00);
				\draw[->, ultra thick, red] (12) -- (03);
				\draw (22) -- (33);
				\draw (11) -- (00);
				\draw (21) -- (30);
				\draw (12) -- (11);
				\draw[->, ultra thick, red] (33) -- (30);
				\draw (22) -- (21);
			\end{tikzpicture}}}}
\newsavebox\cubef\sbox\cubef{\raisebox{-.5\height}{
		\scalebox{0.3}{
			\begin{tikzpicture}
				\draw[red, fill=red!40!white,opacity=.3] (2,1) -- (2,2) -- (3,3) -- (3,0) -- (2,1);
				\node (00) at (0,0) {$\bullet$};
				\node (30) at (3,0) {$\bullet$};
				\node (11) at (1,1) {$\bullet$};
				\node (21) at (2,1) {$\bullet$};
				\node (03) at (0,3) {$\bullet$};
				\node (33) at (3,3) {$\bullet$};
				\node (12) at (1,2) {$\bullet$};
				\node (22) at (2,2) {$\bullet$};
				\draw (00) -- (30);
				\draw (11) -- (21);
				\draw (12) -- (22);
				\draw (03) -- (33);
				\draw (03) -- (00);
				\draw (12) -- (03);
				\draw[->, ultra thick, red] (22) -- (33);
				\draw (11) -- (00);
				\draw (21) -- (30);
				\draw (12) -- (11);
				\draw[->, ultra thick, red] (33) -- (30);
				\draw (22) -- (21);
			\end{tikzpicture}}}}

\begin{proof}
	We deduce the equivariance of $\Charm_{c'}$ given equivariance of $\Charm_c$ using the commutativity of the other faces of the diagram in~\Cref{fig:outline}.  We compute:%
	\begin{align*} \mrho_{c'} \circ \; \Charm_{c'} & = \mrho_{c'} \circ \beta_{c,k} \circ \Charm_c \circ \alpha_{c,k}^{-1} & \text{by~\Cref{prop:inductive step}} \\
                                               & = \beta_{c,k} \circ \mrho_c \circ \; \Charm_c \circ \alpha_{c,k}^{-1} & \text{by \Cref{prop:nn_induction}}   \\
                                               & = \beta_{c,k} \circ \Charm_c \circ \Kr_c \circ \alpha_{c,k}^{-1}      & \text{by Cambrian induction}         \\
                                               & = \Charm_{c'} \circ \alpha_{c,k} \circ \Kr_c \circ \alpha_{c,k}^{-1}  & \text{by~\Cref{prop:inductive step}} \\
                                               & = \Charm_{c'} \circ \Kr_{c^\prime}                                    & \text{by~\Cref{prop:nc_induction}}.
	\end{align*}
	Therefore, if \Cref{thm:main_theorem} holds for $c$, then \Cref{thm:main_theorem} holds for $c'$.  Since we know that the theorem holds for $\co$, by~\Cref{lem:initial_conj} we conclude \Cref{thm:main_theorem} for all (standard) Coxeter elements.

	To understand it pictorially, we have
	\[ \usebox\cubeaa \xrightarrow[\usebox\cubea]{\textit{Pr.}~\ref{prop:inductive step}} \usebox\cubebb \xrightarrow[\usebox\cubeb]{\textit{Pr.}~\ref{prop:nn_induction}} \usebox\cubecc \xrightarrow[\usebox\cubec]{\textit{Sec.}~\ref{sec:cambrian_induction}} \usebox\cubedd \xrightarrow[\usebox\cubed]{\textit{Pr.}~\ref{prop:inductive step}} \usebox\cubeee \xrightarrow[\usebox\cubee]{\textit{Pr.}~\ref{prop:nc_induction}} \usebox\cubef \]
	where, with respect to the statements of \Cref{prop:inductive step} and \Cref{prop:nc_induction}, we can reverse some arrows because the involved functions are all bijections.

	To establish the uniqueness of these bijections, we show that $w \in \NC(\S_n,c)$ is uniquely identified by
	\[ \orbsupp(w) \coloneqq (\Supp(w),\Supp(\Kr_c(w)),\Supp(\Kr_c^2(w)),\dots,\Supp(\Kr_c^{2n-1}(w))). \]
	Suppose this claim is established. Making a similar definition for $x\in\NN(\S_n)$,
	\[ \orbsupp(x) \coloneqq (\Supp(x),\Supp(\rho_c(x)),\Supp(\rho_c^2(x)),\dots,\Supp(\rho_c^{2n-1}(x))), \]
	the rest of the main theorem tells us that $\phi_c(w)$ satisfies $\orbsupp(w)=\orbsupp(\phi_c(w))$. Since, by the claim, distinct noncrossing partitions $w$ yield distinct values for $\orbsupp(w)$, it must also be that the values of $\orbsupp(x)$ are distinct for distinct $x\in \NN(\S_n)$. Then $\phi_c$ is the unique map sending $w$ to the $x\in \NN(\S_n)$ such that $\orbsupp(w)=\orbsupp(x)$.

	We now establish the claim that $w\in \NC(\S_n,c)$ is uniquely identified by $\orbsupp(w)$ (i.e., that the map $\orbsupp$ is injective on $\NC(\S_n,c)$). Let $w\in \NC(\S_n,c)$. Let $1\leq i <j\leq n$, and suppose that $i$ and $j$ are in different blocks of $w$. There exists a coarsening $w'$ of $w$ which has only two parts in which $i$ and $j$ are still in different blocks. (Such a coarsening can be obtained by successively choosing a block that contains neither $i$ nor $j$, but which is adjacent to a block that does contain one of them, and merging
	the selected block with the adjacent block.)
	Suppose that in $w'$ the block containing $i$ has $k$ elements, and the block containing $j$ has $n-k$ elements. There is a rotation of $w'$, say $c^{-t} wc^t$  in which the image of the block containing $i$ consists of the numbers $\{1,\dots,k\}$, while the image of the block containing $j$ consists of $\{k+1,\dots,n\}$. (This is true since there is a rotation of $w'$ such that the block containing $i$ consists of any $k$ numbers consecutive around the circle, and $\{1,\dots,k\}$ are the labels of one such set.) It follows that $\alpha_k$ is not in the support of $c^{-t} w'c^{t}$, and thus it is not in the support of $c^{-t} wc^t$ either. This fact can be read off from $\orbsupp(w)$.
	This allows us to deduce that if $\widetilde w$ is a noncrossing partition such that $\orbsupp(\widetilde w)=\orbsupp(w)$, then $i$ and $j$ must be in different blocks, and thus, by applying the same idea with any pair $i,j$ in different blocks, it follows that $\widetilde w$ must refine $w$. Reversing the argument, $w$ must refine $\widetilde w$. Thus $\widetilde w=w$.
\end{proof}

\section{Future Work}

It is natural to wish to extend our bijections to all Dynkin types.  A first step is to extend the notion of $c$-charmed---to this end, let $W$ be an irreducible finite Weyl group with positive roots $\Phi^+$, and let $c$ be a standard Coxeter element.  For $s$ a simple reflection, we say that its corresponding simple root $\alpha_s$ is \defn{$c$-charmed} if $s$ is either initial or final in $c$. Then a general positive root $\alpha \in \Phi^+$ is \defn{$c$-charmed} if and only if $s(\alpha)$ is $scs$-charmed.

\subsection{Type \texorpdfstring{$C$}{C}}
\label{sec:typec}
In this short subsection, we prove that our techniques immediately extend to type $C_n$ by folding.  We note that although the root posets of types $B_n$ and $C_n$ are isomorphic as posets, the labeling of these posets by roots differs in a fundamental way---the labeling of $\Phi^+(C_n)$ is a folding of $\Phi^+(A_{n-1})$, but this is not the case for $B_n$.

The \defn{hyperoctahedral group} $W(C_n)$ is the group of signed permutations---that is, bijections $w$ from $[\pm n] \coloneqq \{-n,\ldots,-1,1,\ldots,n\}$ such that $w(i)=-w(-i)$ for all $1 \leq i \leq n$.  As a Coxeter group, $W(C_n)$ is generated by the simple reflections $s_i^C \coloneqq (i,i+1)(-i,-i-1)=(\;(i,i+1)\;)$ for $1 \leq i <n$ and $s_n^C=(n,-n)$.  A simple folding argument gives an embedding $\iota \colon W(C_n) \hookrightarrow \S_{2n}$:
\begin{align*}
	\iota(s_i^C) & = s_i s_{2n-i}=(i,i+1)(2n-i,2n+1-i) \text{ for } 1 \leq i <n \text{ and} \\
	\iota(s_n^C) & = s_n=(n,n+1).
\end{align*}
Under this folding, any Coxeter element $c^C \in W(C_n)$ is sent to a Coxeter element $c=\iota(c^C)$ of $\S_{2n}$.  It is easy to see that $\AR(c^C)$ is a folding of $\AR(c)$ horizontally, and that $\Phi^+(C_n)$ is a folding $\Phi^+(A_{2n})$ vertically---thus, the $c^C$-Kroweras complement $\Krow_{c^C}$ is obtained by folding $\Krow_c$.  Similarly, $\iota \colon \NC(W(C_n),c^C) \hookrightarrow \NC(\S_{2n},c)$ sends a noncrossing partition $\pi^C$ to a noncrossing partition $\pi=\iota(\pi^C)$ that is centrally symmetric, and so the $c^C$-Kreweras complement is obtained by folding $\Krew_c$.  We conclude the following.

\begin{corollary}
	Let $W(C_n)$ be the group of signed permutations, and fix a standard Coxeter element $c \in W(C_n)$.  Then there is a unique bijection $\Charm_c \colon \NC(W(C_n),c) \to \NN(C_n)$ satisfying
	\begin{itemize}
		\item $\Charm_c \circ \Krew_c = \Krow_c \circ \; \Charm_c$ and
		\item $\Supp = \Supp \circ \Charm_c$.
	\end{itemize}
\end{corollary}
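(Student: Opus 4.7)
The plan is to deduce the type $C_n$ corollary directly from \Cref{thm:main_theorem} for $\S_{2n}$ via the folding $\iota\colon W(C_n)\hookrightarrow \S_{2n}$ already introduced. Concretely, I will show that $\iota$ induces inclusions at every vertex of the commutative cube in \Cref{fig:outline} whose image consists of centrally symmetric objects, that $\Charm_c$ preserves central symmetry, and that the resulting restriction is the required bijection $\Charm_{c^C}$.

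First I would set up the fixed--point dictionary. The $\mathbb{Z}/2$--action on $\S_{2n}$ by conjugation with the long element $w_\circ$ (equivalently, by the diagram automorphism $i\mapsto 2n+1-i$) has fixed subgroup $\iota(W(C_n))$, and a noncrossing partition $\nc\in\NC(\S_{2n},c)$ is centrally symmetric if and only if it lies in $\iota(\NC(W(C_n),c^C))$. By definition of $\iota$, for any standard Coxeter element $c^C\in W(C_n)$ the image $c=\iota(c^C)$ is a standard Coxeter element of $\S_{2n}$ whose cycle notation is centrally symmetric, so $\Krew_c$ commutes with the symmetry and restricts to $\Krew_{c^C}$. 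On the nonnesting side, the positive root poset $\Phi^+(C_n)$ is naturally realized as the quotient of $\Phi^+(A_{2n-1})$ by the involution $(i,j)\mapsto(2n+1-j,2n+1-i)$, and lifting an order ideal $\nn^C\in\NN(C_n)$ to its preimage $\iota(\nn^C)\in\NN(\S_{2n})$ identifies centrally symmetric nonnesting partitions of $\S_{2n}$ with $\NN(C_n)$. A careful inspection of the AR quiver shows that $\AR(c)$ is horizontally centrally symmetric and that the inversion sequence $\inv(\w_\circ(c))$ can be chosen to be invariant (up to commutations of non-adjacent toggles) under this symmetry, so that $\Krow_c$ commutes with the symmetry and, by \Cref{rem:commutation}, its restriction to centrally symmetric order ideals is exactly $\Krow_{c^C}$.

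Next I would show that $\Charm_c$ preserves central symmetry. The bijection of \Cref{prop:bal_to_nc} is equivariant with respect to the symmetry (it only depends on the cyclic structure of $c$), and \Cref{def:charmed} makes the set $\heart_c$ invariant under $(i,j)\mapsto(2n+1-j,2n+1-i)$ because both $\lheart_c$ and $\rheart_c$ are stable under $i\mapsto 2n+1-i$. Hence the $\heart_c$-intimate family $\L_{(O(\nc),I(\nc))}$ constructed in \Cref{lem: bij help in-out} from a centrally symmetric balanced pair is itself centrally symmetric (by uniqueness), and taking its order ideal yields a centrally symmetric nonnesting partition. Therefore $\Charm_c$ restricts to a bijection $\iota(\NC(W(C_n),c^C))\to\iota(\NN(C_n))$, which I define to be $\Charm_{c^C}$.

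Finally, both equivariance and support-preservation for $\Charm_{c^C}$ are inherited by restriction from the corresponding properties of $\Charm_c$ in \Cref{thm:main_theorem}, after noting that the support of a centrally symmetric noncrossing (resp.\ nonnesting) partition in $\S_{2n}$ recovers its type $C$ support under the folding $s_i^C\leftrightarrow\{s_i,s_{2n-i}\}$, $s_n^C\leftrightarrow s_n$. For uniqueness, I would repeat the orbit--support argument at the end of the proof of \Cref{thm:charm}: the map $\orbsupp$ is injective on $\NC(W(C_n),c^C)$ because the same coarsening/rotation argument (now using centrally symmetric two-block coarsenings when $i,j$ are not antipodal, and using the long root $(n,-n)$ when they are) exhibits a power of $\Krew_{c^C}$ missing a prescribed simple reflection in its support whenever two indices lie in distinct blocks. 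The main obstacle I anticipate is checking that the folded inversion sequence for $\w_\circ(c^C)$ can be obtained, up to commutations, from the $\S_{2n}$-inversion sequence by quotienting out the symmetry; this is a bookkeeping exercise on the AR quiver but must be done carefully to ensure that the restriction of $\Krow_c$ really does coincide with $\Krow_{c^C}$ as defined intrinsically in $C_n$.
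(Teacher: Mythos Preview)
Your approach is the same folding argument the paper uses: identify $\NC(W(C_n),c^C)$ and $\NN(C_n)$ with the centrally symmetric objects in $\NC(\S_{2n},c)$ and $\NN(\S_{2n})$, check that $\Krew_c$, $\Krow_c$, and $\Charm_c$ all commute with the symmetry, and restrict. The paper simply asserts these compatibilities (``it is easy to see that $\AR(c^C)$ is a folding of $\AR(c)$ horizontally, and that $\Phi^+(C_n)$ is a folding of $\Phi^+(A_{2n-1})$ vertically''), while you spell out why $\heart_c$ and the intimate families are symmetric and why the uniqueness argument adapts; this extra detail is welcome and goes beyond what the paper provides.

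One correction: your justification that $\heart_c$ is invariant under $(i,j)\mapsto(2n{+}1{-}j,\,2n{+}1{-}i)$ is right, but the stated reason is not. The sets $\lheart_c$ and $\rheart_c$ are \emph{swapped} by $i\mapsto 2n{+}1{-}i$, not individually stable. Indeed, for $1<i<2n$ one has $i\in\lheart_c\iff c(i)>i$; since $c(2n{+}1{-}i)=2n{+}1{-}c(i)$ for $c=\iota(c^C)$, this gives $i\in\lheart_c\iff 2n{+}1{-}i\in\rheart_c$. (Already for $n=2$, $c^C=s_1^Cs_2^C$, one gets $c=(1,2,4,3)$ with $\lheart_c=\{2\}$, $\rheart_c=\{3\}$.) Because a root is $c$-charmed precisely when its two indices lie in \emph{different} sets, the swap still yields the invariance of $\heart_c$ that you need, so the rest of your argument goes through unchanged.
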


\bibliographystyle{amsalpha}
\bibliography{bibliography}

\end{document}